\def\norm#1{\|#1\|} 
\definecolor{cred}{RGB}{139,37,0}
\definecolor{cblue}{rgb}{0.0, 0.45, 0.73}
\definecolor{cgreen}{rgb}{0.2, 0.8, 0.2}
\definecolor{cyellow}{rgb}{1.0, 0.97, 0.0}
\definecolor{corange}{rgb}{1.0, 0.8, 0.0}
\newcommand{\sqboxs}{1.2ex}
\newcommand{\sqbox}[1]{\textcolor{#1}{\rule{\sqboxs}{\sqboxs}}}
\newcommand{\beq} {\begin{equation}}
\newcommand{\eeq} {\end{equation}}
\newcommand{\bdm} {\begin{displaymath}}
\newcommand{\edm} {\end{displaymath}}
\newcommand{\bit}{\begin{itemize}}
\newcommand{\eit}{\end{itemize}}
\newcommand{\bde}{\begin{description}}
\newcommand{\ede}{\end{description}}
\newcommand{\bce}{\begin{center}}
\newcommand{\ece}{\end{center}}
\newcommand{\ben} {\begin{enumerate}}
\newcommand{\een} {\end{enumerate}}
\newcommand{\bea} {\begin{eqnarray}}
\newcommand{\eea} {\end{eqnarray}}
\newcommand{\barr} {\begin{array}}
\newcommand{\earr} {\end{array}}
\newcommand{\bean} {\begin{eqnarray*}}
\newcommand{\eean} {\end{eqnarray*}}
\newcommand{\edoc} {

\newcommand{\averageM}[1]{\ensuremath{\LRc{\hspace{-2pt}\LRc{#1}\hspace{-2pt}}}}

\newcommand{\tb}{{\bf t}}

\newcommand{\Ne}[1]{{N_{E_#1}}}
\newcommand{\ephih}{{\veps^h_\phi}}
\newcommand{\ephiI}{{\veps^I_\phi}}
\newcommand{\ephihh}{{\veps^h_{\phih}}}
\newcommand{\ephihI}{{\veps^I_{\phih}}}
\newcommand{\evelh}{{\bs{\veps}^h_{\vel}}}
\newcommand{\evelhh}{{\bs{\veps}^h_{\Ubh}}}
\newcommand{\evelI}{{\bs{\veps}^I_{\vel}}}
\newcommand{\evelhI}{{\bs{\veps}^I_{\Ubh}}}
\newcommand{\Proj}{{\mathbb{P}}}

\newcommand{\rhoAn}[2]{{ (\rho )^{#2}_{1,{#1}} }}
\newcommand{\rhoBn}[2]{{ (\rho )^{#2}_{2,{#1}} }}
\newcommand{\rhouAn}[2]{{ (\rho u_1)^{#2}_{({#1})} }}
\newcommand{\rhouBn}[2]{{ (\rho u_2)^{#2}_{({#1})} }}
\newcommand{\rhowAn}[2]{{ (\rho w_1)^{#2}_{({#1})} }}
\newcommand{\rhowBn}[2]{{ (\rho w_2)^{#2}_{({#1})} }}
\newcommand{\rhousA}[1]{{ (\rho u)^*_{1,{#1}} }}
\newcommand{\rhousB}[1]{{ (\rho u)^*_{2,{#1}} }}
\newcommand{\rhouA}[1]{{ (\rho u)_{1,{#1}} }}
\newcommand{\rhouB}[1]{{ (\rho u)_{2,{#1}} }}
\newcommand{\rhoA}[1]{{ \rho_{1_{,#1}} }}
\newcommand{\rhoB}[1]{{ \rho_{2_{,#1}} }}
\newcommand{\qhfluxA}[1]{{ \hat{q}_{1_{,#1}} }}
\newcommand{\qhfluxB}[1]{{ \hat{q}_{2_{,#1}} }}
\newcommand{\qfluxA}[1]{{ q_{1_{,#1}} }}
\newcommand{\qfluxB}[1]{{ q_{2_{,#1}} }}
\newcommand{\tempA}[1]{{ \temp_{1_{,#1}} }}
\newcommand{\tempB}[1]{{ \temp_{2_{,#1}} }}
\newcommand{\tempAn}[2]{{ \temp^{#2}_{1_{,#1}} }}
\newcommand{\tempBn}[2]{{ \temp^{#2}_{2_{,#1}} }}
\newcommand{\zerob}{{\bf 0 }}
\newcommand{\Nze}[1]{{ N_{ze_{#1}} }}
\newcommand{\Nxe}[1]{{ N_{xe_{#1}} }}
\newcommand{\Nye}[1]{{ N_{ye_{#1}} }}
\newcommand{\lrhsb}{{\bf L }}
\newcommand{\rhsb}{{\bf R }}
\newcommand{\tempb}{{\bf \temp}}
\newcommand{\jmhalf}{{j-\half}}
\newcommand{\jphalf}{{j+\half}}
\newcommand{\jp}{{i+1}}
\newcommand{\jm}{{i-1}}
\newcommand{\imhalf}{{i-\half}}
\newcommand{\iphalf}{{i+\half}}
\newcommand{\ip}{{i+1}}
\newcommand{\im}{{i-1}}
\newcommand{\dz}{{\triangle z}}
\newcommand{\temp}{{u}}
\newcommand{\Uhat}{\hat{\U}}
\newcommand{\Vhat}{\hat{\V}}
\newcommand{\What}{\hat{\W}}
\newcommand{\pOmegah}{{\pOmega_h}}
\newcommand{\Omegah}{{\Omega_h}}
\newcommand{\Omegat}{{\Omega_t}}
\newcommand{\Vscript}{\mathscr{V}}
\newcommand{\vtest}{{\bf v}}
\newcommand{\vhtest}{{\hat{\bf v}}}
\newcommand{\Np} {\ensuremath{{N_\text{p}}}}
\newcommand{\qbh}{\hat{\mb{\q}}}
\newcommand{\tila}{\tilde{a}}
\newcommand{\tilb}{\tilde{b}}
\newcommand{\tilc}{\tilde{c}}
\newcommand{\xb}{{\bf x}}
\newcommand{\hm}{h^-}
\newcommand{\hp}{h^+}
\newcommand{\s}{s}
\newcommand{\GammaD}{\Gamma_D}
\newcommand{\cGammaD}{\overline{\Gamma}_D}
\newcommand{\GammaN}{\Gamma_N}
\newcommand{\cGammaN}{\overline{\Gamma}_N}
\newcommand{\Fb}{{\bf F}}
\newcommand{\FbN}{ {{\bf F}_{ \mc{N} } } }
\newcommand{\Fbs}{{{\bf F}^*}}
\newcommand{\Fbh}{{\hat{\bf F}}}
\newcommand{\Gbs}{{{\bf G}^*}}
\newcommand{\Gbh}{{\hat{\bf G}}}
\newcommand{\Ub}{{\bf U}}
\newcommand{\Ubh}{\hat{\Ub}}
\newcommand{\Ube}{\Ub^e}
\newcommand{\nb}{{\bf n}}
\newcommand{\U}{U}
\newcommand{\W}{W}
\newcommand{\Fcal}{\mathcal{F}}
\newcommand{\Fcalh}{\hat{\mathcal{F}}}
\newcommand{\Fcals}{\mathcal{F}^*}
\newcommand{\Acal}{\mathcal{A}}
\newcommand{\rvec}{{ \bf \hat{r}}}
\newcommand{\dtt}{\triangle t}
\newcommand{\Qb}{{\bf Q}}
\newcommand{\Qbi}{{\bf Q}^{(i)}}
\newcommand{\Qbip}{{\bf Q}^{(i)}_+}
\newcommand{\Qbim}{{\bf Q}^{(i)}_-}
\newcommand{\Qbj}{{\bf Q}^{(j)}}
\newcommand{\Qbh}{\hat{{\bf Q}}} 
\newcommand{\Qbhi}{\hat{{\bf Q}}^{(i)}}
\newcommand{\Qbhj}{\hat{{\bf Q}}^{(j)}}
\newcommand{\dQbh}{\delta\hat{{\bf Q}}} 
\newcommand{\dQb}{{\delta\bf Q}}
\newcommand{\Lcal}{\mathcal{L}}
\newcommand{\NLcal}{\mathcal{NL}}
\newcommand{\Ncal}{\mathcal{N}}
\newcommand{\MassMatrix}{{ \bf M}}
\newcommand{\InvMassMatrix}{{ \bf M}^{-1}}
\newcommand{\ab}{{\bf a}}
\newcommand{\bb}{{\bf b}}
\newcommand{\Res}{{\mathcal Res}}
\newcommand{\Flx}{{\mathcal Flx}}
\newcommand{\tauh}{\hat{\tau}}
\newcommand{\ulon}{\u_\lambda}
\newcommand{\ulat}{\u_\theta}
\newcommand{\uinf}{\u_\infty}
\newcommand{\mass}{\text{mass}}
\newcommand{\energy}{\text{energy}}
\newcommand{\Uhatb}{\hat{\mb{U}}}
\newcommand{\phibar}{{\Phi}}
\newcommand{\Ubar}{\overline{U}}
\newcommand{\phin}{\phi^n}
\newcommand{\phinp}{\phi^{n+1}}
\newcommand{\phihat}{{\hat{\phi}}}
\newcommand{\psin}{\psi^{n}}
\newcommand{\psinp}{\psi^{n+1}}
\newcommand{\psihat}{{\hat{\psi}}}
\newcommand{\Unp}{\U^{n+1}}
\newcommand{\Un}{\U^{n}}
\newcommand{\Uhatn}{\Uhat^{n}}
\newcommand{\Uhatnp}{\Uhat^{n+1}}
\newcommand{\Ustarn}{{\U^{*n}}}
\newcommand{\Ulstarn}{{\U_L^{*n}}}

\newcommand{\dt}{{\triangle t}}
\newcommand{\dx}{{\triangle x}}
\newcommand{\dy}{{\triangle y}}

\newcommand{\kkb}{\mathbf{\kappa}}
\newcommand{\kb}{{\bf k}}
\newcommand{\Thetap}{{\Theta^\prime}}
\newcommand{\Thetahat}{\hat{\Theta}}
\newcommand{\Thetahatp}{\hat{\Theta}^\prime}
\newcommand{\Thetabar}{\bar{\Theta}}
\newcommand{\thetap}{{\theta^\prime}}
\newcommand{\thetabar}{\bar{\theta}}
\newcommand{\presp}{{p^\prime}}
\newcommand{\presbar}{\bar{p}}

\newcommand{\rhohatp}{\hat{\rho}^\prime}
\newcommand{\rhow}{\rho w}
\newcommand{\rhot}{\rho \theta}
\newcommand{\rhop}{{\rho^\prime}}
\newcommand{\rhobar}{\bar{\rho}}
\newcommand{\uref}{{u_{0}}}
\newcommand{\xref}{{x_{0}}}
\newcommand{\pref}{{p_{0}}}
\newcommand{\tref}{{t_{0}}}
\newcommand{\Mref}{{M_{0}}}
\newcommand{\Frref}{{Fr_{0}}}
\newcommand{\Reref}{{Re_{0}}}
\newcommand{\rhoref}{{\rho_{0}}}

\newcommand{\Qmat}{{\bf{Q}}}
\newcommand{\Dmat}{{\bf{D}}}
\newcommand{\Rmat}{{\bf{R}}}
\newcommand{\RmatInv}{{\bf{R}^{-1}}}
\newcommand{\Lambdamat}{{\boldsymbol{\Lambda}}}
\newcommand{\pres}{{{p}}}
\newcommand{\udotn}{{{\ub\cdot \nb}}}
\newcommand{\rhoInv}{{{\rho^{-1}}}}
\newcommand{\aInv}{{{a^{-1}}}}
\newcommand{\mgamma}{{{\tilde{\gamma} }}}
\newcommand{\enthalpy}{{{H}}}
\newcommand{\quarter}{{{\frac{1}{4}}}}
\newcommand{\ienergy}{{{e}}} 
\newcommand{\tenergy}{{{E}}} 
\newcommand{\ddx}[1]{{{  \LRp{#1}_{,x}   }}}
\newcommand{\qbold}{{\bf{q}}}

\newcommand{\Ical}{\mathcal{I}}

\newcommand{\Nb}{{\bf N }}
\newcommand{\Kcal}{{\mc{K}}}
\newcommand{\Rgas}{{R}}
\newcommand{\Temper}{{T}}
\newcommand{\at}{\tilde{a}}
\newcommand{\Ht}{\tilde{H}}
\newcommand{\nx}{{n_x}}
\newcommand{\ny}{{n_y}}
\newcommand{\nz}{{n_z}}
\newcommand{\tx}{{t_x}}
\newcommand{\ty}{{t_y}}
\newcommand{\tz}{{t_z}}
\newcommand{\sx}{{s_x}}
\newcommand{\sy}{{s_y}}
\newcommand{\sz}{{s_z}}

\newcommand{\gammam}{{\tilde{\gamma}}}
\newcommand{\mm}{\LRp{\frac{\gamma-1}{a^2}}}

\newtheorem{theorem}{Theorem}[section]

\newtheorem{proposition}[theorem]{Proposition}

%
%


\newcommand{\csquare}[1]{\tikz{\node[draw=#1,fill=#1,rectangle,minimum width=0.2cm,minimum height=0.2cm,inner sep=0pt] at (0,0) {};}}

\DeclareRobustCommand\redsquare{\tikz \fill[red] (3ex,3ex) square (1ex);}

\newcommand{\Emil}[1]{{\color{blue}Emil: #1}}
\newcommand{\Hong}[1]{{\color{red}Hong: #1}}
\newcommand{\Shinhoo}[1]{{\color{green}Shinhoo: #1}}
\newcommand{\Rob}[1]{{\color{orange}Rob: #1}}

\bibliographystyle{elsarticle-num}

\begin{document}

\begin{frontmatter}

\title{Mass-Conserving Implicit-Explicit Methods for Coupled Compressible Navier--Stokes Equations}
\author[Argonne]{Shinhoo Kang\corref{mycorrespondingauthor}}
\cortext[mycorrespondingauthor]{Corresponding author}
\ead{shinhoo.kang@anl.gov}
\author[Argonne]{Emil M. Constantinescu}
\ead{emconsta@anl.gov}
\author[Argonne]{Hong Zhang}
\ead{hongzhang@anl.gov}
\author[Argonne-ESD]{Robert L. Jacob}
\ead{jacob@anl.gov}

\address[Argonne]{Mathematics and Computer Science Division, Argonne National Laboratory, Lemont, IL, USA}
\address[Argonne-ESD]{Environmental Science Division, Argonne National Laboratory, Lemont, IL, USA}

\begin{abstract}
  Earth system models are composed of coupled components that separately model systems such as the global atmosphere, ocean, and land surface.
  While these components are well developed, coupling them in a single system can be a
  significant challenge. Computational efficiency, accuracy, and
  stability are principal concerns.   
  In this study we focus on these issues.  
  In particular, implicit-explicit (IMEX) tight and loose coupling strategies 
  are explored for handling different time scales. 
  For a simplified model for the air-sea interaction problem, 
  we consider coupled compressible Navier--Stokes equations with an interface condition. 
  Under the rigid-lid assumption, horizontal momentum and heat flux are exchanged through the interface. 
  Several numerical experiments are presented to demonstrate the stability of the coupling schemes. 
  We  show both numerically and theoretically  that our IMEX coupling methods are mass conservative  
  for a coupled compressible Navier--Stokes system with the rigid-lid condition. 
\end{abstract}

\begin{keyword}
stiff problem \sep coupling \sep fluid-fluid interaction \sep IMEX \sep Navier--Stokes
\end{keyword}

\end{frontmatter}


\section{Introduction}

The rapid development of high-performance computational resources and 
state-of-art Earth system models (ESMs), 
for example, the Energy Exascale Earth System Model (E3SM) \cite{golaz2019doe}
and EC-Earth \cite{hazeleger2010ec}, 
enhance 
our understanding of important processes in the Earth system, 
such as the global water cycle \cite{gentine2019coupling} or global biogeochemical cycles \cite{burrows2020doe}),  
the predictability of the climate system \cite{collins2006community} including
global warming \cite{anderson2016co2}, and sea level rise \cite{hoffman2019effect}. 
  
ESMs are composed of models of various components such as atmosphere, ocean, ice, river, and land. 
Each component has its own temporal and spatial scale, and  thus each can have its own computational grid and timestep. 
Moreover, the governing equations, while related, are not identical; and a
proper synchronization, interpolation, or projection of the state
variables is necessary between components.  
In order to address such issues, 
several couplers are used in ESMs to control 
the interactions between these components \cite{golaz2019doe}; examples include CPL7 with the Model Coupling Toolkit \cite{craig2012new, jacob2005m} for E3SM and the Community Earth Systems Model, 
and 
OASIS3 \cite{valcke2013oasis3} for the EC-Earth ESM. These couplers control the overall time integration of the ESM and are carefully constructed to allow hundreds of simulated years of integration.

However, challenges still exist.
For some component pairings and choices of coupling frequency, the coupling procedure can trigger numerical instability
\cite{hallberg122014numerical,lemarie2015analysis, beljaars2017numerical}.
The overall accuracy and stability of the coupling schemes are not widely explored 
in the context of Earth system models because of their complexity and computational demands 
\cite{zhang2020stability}. Recent efforts target  
various coupling strategies  such as 
synchronous partitioned schemes for convection-diffusion equations
\cite{peterson2019explicit,sockwell2020interface}, 
partitioned coupling algorithms for diffusion equations
\cite{zhang2020stability}, global-in-time Schwarz methods for
diffusion equations \cite{lemarie2014sensitivity}, operator splitting
methods for incompressible Navier--Stokes (INS) equations
\cite{bresch2006operator}, 
subdomain iteration methods for coupled 3D and 1D INS equations
\cite{formaggia2001coupling},
and sequential and concurrent coupling
approaches for the Boussinesq convection model
\cite{connors2019stability}. 
In \cite{carpenter2014entropy}, 
    entropy stable compressible Navier--Stokes (CNS) systems is coupled with synchronous explicit Runge-Kutta methods in discontiuous Galerkin spatial discretization. 
    
There are different dimensions in terms of how we partition and couple the different systems. 
For example, with implicit-implicit time-stepping, 
both the atmospheric and the ocean models are advanced implicitly.
However, in our study, we focus on implicit-explicit time stepping. 
Implicit-explicit (IMEX) methods are an important class of methods
developed to efficiently handle problems that have both stiff and nonstiff
components  by solving stiff terms implicitly and nonstiff terms explicitly 
 \cite{ascher1997implicit,pareschi2005implicit,Constantinescu_A2010a}. Several IMEX Runge--Kutta (IMEX-RK) methods have been proposed 
\cite{Kennedy2003additive,roldan2013efficient, boscarino2017unified}.
IMEX methods are successfully adapted 
to handle geometrically induced stiffness \cite{kanevsky2007application},
to relax scale-separable stiffness in shallow water equations 
\cite{kang2019imex} and non-hydrostatic systems \cite{restelli2009conservative, giraldo2010high, giraldo2013implicit, gardner2018implicit, vogl2019evaluation,Abdi_2019}. 
In particular, the authors in \cite{gardner2018implicit} 
studied various IMEX methods for evolving acoustic waves implicitly to enable larger time step sizes in a global non-hydrostatic atmospheric model. 
IMEX methods are also applied to 
coupling different systems including  fluid-structure interaction \cite{froehle2014high} and 
particle-laden flow \cite{huang2019high}.
 
We propose a set of IMEX-based tight and loose coupling methods
for coupled compressible Navier--Stokes (CNS) equations 
with a rigid-lid coupling condition in finite volume (FV) spatial discretization. 
IMEX tight coupling schemes treat the ocean implicitly whereas the atmosphere explicitly by using the partitioned IMEX method.
IMEX loose coupling schemes handle the ocean semi-implicitly by using the standard IMEX method whereas the atmosphere explicitly.
In particular, we choose additive Runge--Kutta (ARK) methods 
because the schemes  not only support high-order accuracy in time but also 
provide simple embedded schemes that can be used for interpolation or
extrapolation of stage values in time \cite{Kennedy2003additive}.  
Furthermore, we demonstrate the mass-conserving property of the proposed coupling methods and 
show the performance of IMEX coupling schemes through numerical examples. 
  
In this study, we use a setup that is simplified when compared with operational
models but that still maintains the same temporal challenges associated
with coupling air-sea interaction. The physical domains of the ocean and of the
atmosphere are different, one has side boundaries and the other does not, but both are dry ideal gases. 
We do not consider gravitational forcing.
Incorporating the gravity term is desirable for atmospheric and ocean modelers, but we limit this study to the simplest problem possible to focus on the coupling the heat and horizontal momentum across the interface using IMEX approaches, which is not directly affected by gravity. 
We also consider conformal grids across
both domains and the  interface. These simplifications allow us to
focus on the effect of the jumps in temperature and velocity across
the interface, for which we use a linear bulk flux formula. This 
provides a baseline for the development of coupling methods through
such interfaces and illustrates challenges associated broadly with coupled systems.

  
This paper is organized as follows.
We begin in Section \secref{model-problems}
by describing the coupled CNS systems 
with rigid-lid interface condition as well as bulk flux and 
FV spatial discretization on a uniform mesh. 
In Section \secref{IMEXCoupling} 
we illustrate IMEX-RK tight and loose coupling methods and their mass-conserving property.
In Section \secref{NumericalResults} we discuss the performance of IMEX-RK coupling schemes 
through numerical simulations. 
In Section \secref{Conclusion} we present our conclusions.

\section{Model problems}
\seclab{model-problems}

We consider two ideal gas fluids governed by CNS. 
The domain $\Omega$ consists of two subdomains $\Omega_1$ and $\Omega_2$, 
which are vertically separated by 
the interface $\Gamma = \overline{\Omega}_1 \cap \overline{\Omega}_2$, as shown  
in Figure \figref{coupledmodel_cns}.


%
%
%

The homogeneous CNS equations in $\Omega_m$ ($m\in\{1,2\}$) are described by
\begin{subequations}
\eqnlab{cns-gov}
\begin{align}
  \eqnlab{cns-mass}
  \dd{\rho_m}{t}     + \Div \LRp{\rho \ub_m} &= 0,\\
  \eqnlab{cns-momentum}
  \dd{\rho \ub_m}{t} + \Div \LRp{\rho \ub_m \otimes \ub_m + \Ical \pres_m} &= \Div \sigma_m,\\
  \eqnlab{cns-energy}
  \dd{\rho E_m}{t}   + \Div \LRp{\rho \ub_m H_m} &= \Div (\sigma_m \ub_m) - \Div \Pi_m,
\end{align}
\end{subequations}
\begin{wrapfigure}{r}{0.25\textwidth}
  \centering
  \includegraphics[width=0.24\textwidth]{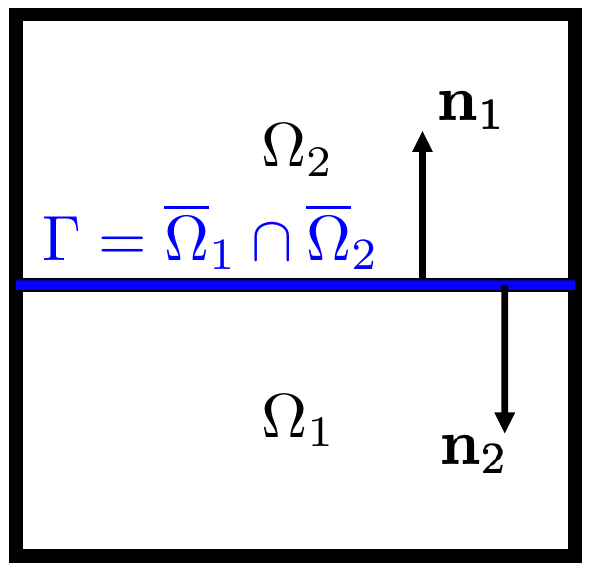}  
  \caption{Schematic of a coupled model.} 
  \figlab{coupledmodel_cns}
\end{wrapfigure}
where $\rho_m$ is the density $[\si{\kilo\gram\per\cubic\meter}]$;
$\ub_m$ is the velocity vector $[\si{\meter\per\second}]$;\footnote{
  $\ub_m=(u_m,w_m)^T$ in two-dimensions.
}
$\pres_m$ is the pressure $[\si{\newton\per\meter\squared}]$;
$\rho E_m = \rho_m e_m + \half \rho_m \norm{\ub_m}^2 $ is the total energy $[\si{\joule\per\cubic\meter}]$;
$e_m=\frac{\pres_m}{\rho_m(\gamma_m -1)}$ is the internal energy $[\si{\joule\per\kilogram}]$;
$H_m = E_m + \frac{\pres_m}{\rho_m} = \frac{a_m^2}{\gamma_m -1} + \frac{1}{2}\norm{\ub_m}^2$ is the total specific enthalpy $[\si{\joule\per\kilogram}]$;
$\sigma_m = \mu_m \LRp{\Grad \ub_m + \Grad (\ub_m)^T - \frac{2}{3} \Ical \Div \ub_m } $ is the viscous stress tensor; 
$\Pi_m = -\kappa_m \Grad T_m$ is the heat flux; 
$T_m$ is the temperature; 
$\kappa_m = \mu_m (c_p)_m Pr_m^{-1}$ is the heat conductivity $[\si{\watt\per\meter\per\kelvin}]$;
$\mu_m$ is the dynamic viscosity $[\si{\pascal\second}]$;
$Pr_m$ is Prantl number; 
$a_m = \LRp{\gamma_m\frac{\pres_m}{\rho_m}}^\half$ is the sound speed $[\si{\meter\per\second}]$ for ideal gas;
$\gamma_m = \LRp{\frac{c_p}{c_v}}_m$ is the ratio of the specific heats; 
and $(c_p)_m$ and $(c_v)_m$ are the specific heat capacities at constant pressure and at constant volume $[\si{\joule\per\kilogram \per\kelvin}]$, respectively. 
In a compact form, \eqnref{cns-gov} can be written as
\begin{align}
\eqnlab{cns-gov-compact}
    \dd{\qb_m}{t}  + \Div \Fcal_m^I(\qb_m) = \Div \Fcal_m^V(\qb_m),
\end{align}
with the conservative variable $\qb_m = (\rho_m, \rho \ub_m^T, \rho E_m)^T$, 
the inviscid flux tensor 
$\Fcal_m^I=(\rho \ub_m, \rho \ub_m \otimes \ub_m + \Ical \pres_m, \rho \ub_m H_m)^T$, and 
the viscous flux tensor 
$\Fcal_m^V=(0, \sigma_m, \sigma_m \ub_m - \Pi_m)^T$  for $\Omega_m$.

\subsection{Interface conditions}
 
The interface $\Gamma$ allows the exchange of heat and horizontal momentum
fluxes between the two domains. To implement this, 
we adapt the rigid-lid assumption used in many oceanography models  \cite{washington2005introduction,muller2006equations}.
\footnote{While many modern global ocean models no longer use the rigid-lid assumption, the free surface variations are ignored in coupling and in this study we want the treatment of the ocean surface to be consistent.}
In the rigid-lid coupling condition \cite{bresch2006operator,connors2012decoupled},
the normal velocity component and normal traction vector at the interface are set to zero, 
but the continuity of the tangential traction vector and heat flux are enforced: 
\begin{subequations}
\eqnlab{rigid-lid-cond}
\begin{align}
  \nb_m \cdot \ub_m  &= 0,\\
  \nb_m \cdot \LRp{\sigma_m \nb_m} &= 0,\\
  \tb \cdot \LRp{\sigma_1 \nb_1 - \sigma_2 \nb_2}  &= 0,\\
  \nb_1 \cdot \Pi_1 + \nb_2 \cdot \Pi_2 &=0.
\end{align}
\end{subequations}
Here, $\nb_m$ is the outward unit normal vectors on $\pOmega_m$ toward the adjacent domain of $\Omega_m$,
and $\tb$ is the tangent vector on $\Gamma$.
In two dimensions, the rigid-lid interface condition \eqnref{rigid-lid-cond} is simplified as\footnote{
Since $v=0$ at the interface, we have $\dd{v}{x}=0$ on the interface.

}  
\begin{align*}
  \hat{w}_1 &= \hat{w}_2 = 0,\\
  \hat{\sigma}_{1zz} &= \hat{\sigma}_{2zz} = 0,\\
  \mu_1 \dd{u_1}{z} &= \mu_2 \dd{u_2}{z} =: \hat{\sigma}_{xz} ,\\ 
   - \kappa_1 \dd{T_1}{z} &= - \kappa_2 \dd{T_2}{z} =: \hat{\Pi}_z,
\end{align*}
with $\nb_1 = (0,1)^T$, $\nb_2=(0,-1)^T$ and $\tb=(1,0)^T$. 

The next question is how to parameterize the physical heat and horizontal momentum fluxes across the interface.
According to previous work in \cite{liu1979bulk,smith1988coefficients,fairall1996bulk,bao2000numerical},
the bulk flux approach is widely used, 
where the physical fluxes are related to the measurable quantities such as wind-ocean current speed and lower air-sea surface temperature.
For example, by introducing the bulk coefficients $b_u$ and $b_T$, 
we can associate temperature and velocity with the heat and the horizontal momentum fluxes: 
\begin{align}
  \eqnlab{bulk-flux}
  \hat{\sigma}_{xz} := b_u (u_2 - u_1) \text{ and }
  \hat{\Pi}_{z} := - b_T (T_2 - T_1).
\end{align}
Several forms of the bulk coefficients have been proposed in earlier work \cite{panosfsky1984atmospheric,fairall1996bulk,vickers2015formulation}, 
but in this study we use the linear bulk coefficients (with constant $\mu_1$,$\mu_2$,$\kappa_1$ and $\kappa_2$): 
\begin{align*}
  b_u = \frac{2 \mu_1 \mu_2}{\dz_2\mu_1 + \dz_1\mu_2}, \text{ and }
  b_T = \frac{2 \kappa_1 \kappa_2}{\dz_2\kappa_1 + \dz_1\kappa_2}, 
\end{align*}
which come from the finite difference (FD) approximation of heat and horizontal momentum fluxes.\footnote{
  Assume there exist a common velocity $\hat{u}$ and temperature $\hat{T}$ at the interface. 
  From the FD approximation of the heat and horizontal momentum fluxes, 
  \begin{align*}
    \mu_1 \frac{\hat{u} - u_1}{\dz_1/2} 
    = \mu_2 \frac{u_2 - \hat{u}}{\dz_2/2}, \text{ and }
    - \kappa_1 \frac{\hat{T} - T_1}{\dz_1/2} 
    = - \kappa_2 \frac{T_2 - \hat{T}}{\dz_2/2},
    \end{align*}
  we have the weighted velocity $\hat{u}$ and temperature $\hat{T}$ by 
  \begin{align*}
    \hat{u} = \frac{\dz_2\mu_1 u_1     + \dz_1\mu_2 u_2}{   \dz_2\mu_1    + \dz_1\mu_2}, \text{ and }
    \hat{T} = \frac{\dz_2\kappa_1 T_1  + \dz_1\kappa_2 T_2}{\dz_2\kappa_1 + \dz_1\kappa_2}. 
  \end{align*}
  Substituting $\hat{u}$ and $\hat{T}$ in the FD form above, we obtain the linear bulk coefficients. 
}

Once we have computed the heat and the momentum fluxes at the interface by using \eqnref{bulk-flux}, 
we estimate the isothermal wall boundary states of $u_w$ and $T_w$ for $\Omega_1$ and $\Omega_2$, respectively,
\begin{align*}
  u_{w_1} = u_1 + \frac{\hat{\sigma}_{xz} \dz_1}{2\mu_1 } \text{ and } T_{w_1} = T_1 - \frac{\hat{\Pi}_z \dz_1}{2\kappa_1 },\\
  u_{w_2} = u_2 - \frac{\hat{\sigma}_{xz} \dz_2}{2\mu_2 } \text{ and } T_{w_2} = T_2 + \frac{\hat{\Pi}_z \dz_2}{2\kappa_2 },
\end{align*}
by using FD approximation.\footnote{
  \begin{align*}
    \hat{\sigma}_{xz}
      = \mu n_z \dd{u}{z} 
      \approx  \mu n_z \frac{u_w - u}{\dz/2},\\
    \hat{\Pi}_z 
      = - \kappa n_z \dd{T}{z} 
      \approx - \kappa n_z \frac{T_w - T}{\dz/2}
  \end{align*}  
}
Then, we apply the isothermal wall boundary treatment \cite{jacobs2007conservative} to the interface. 

\subsection{Nondimensionalization}

By the nondimensional variables, 
$$ 
\rho^*_m = \frac{\rho_m}{\rho_r}, 
\pres^*_m = \frac{\pres_m}{\rho_r u_r^2},
\ub^*_m = \frac{\ub_m}{u_r},
x^* = \frac{x}{L},
t^* = \frac{t}{L/u_r},
\mu^*_m = \frac{\mu_m}{\mu_r},
\text{ and }
T^*_m = \frac{T_m}{T_r},
$$

\noindent
we rewrite the governing equation \eqnref{cns-gov} as  
\begin{subequations}
\eqnlab{cns-gov-nondimension}
\begin{align}
  \dd{\rho_m^*}{t^*}     + \Grad^*\cdot \LRp{\rho \ub_m^*} &= 0,\\
  \dd{\rho \ub_m^*}{t^*} + \Grad^*\cdot \LRp{\rho \ub_m^* \otimes \ub_m^* + \Ical \pres_m^*} &= \Grad^* \cdot \sigma_m^*,\\
  \dd{\rho E_m^*}{t^*}   + \Grad^*\cdot \LRp{\rho \ub_m^* H_m^*} &= \Grad^*\cdot (\sigma_m^* \ub_m^*) - \Grad^* \cdot \Pi_m^*,
\end{align}
\end{subequations}
where 
$\sigma_m^* = \tilde{\mu}_m \LRp{\Grad^* \ub^*_m + \Grad^* (\ub^*_m)^T -\frac{2}{3}\Ical \Grad^* \cdot \ub^*_m } $,  
$\Pi^*_m = \frac{\tilde{c}_p\tilde{\mu}_m}{Pr} \Grad^* T^*_m $,  $\Grad^*=\frac{1}{L}\Grad$,
$\tilde{\mu}_m := \frac{\mu^*_m}{Re_r}$, 
$Re_r := \frac{\rho_r u_r L}{\mu_r}$
and $\tilde{c}_p := \frac{T_r c_p}{u_r^2}$.


We take $\rho_r = \rho_{\infty_2}$, $\mu_r=\mu_{\infty_2}$, $T_r=T_{\infty_2}$, and the speed of sound as a reference velocity, $u_r = a_{\infty_2} = \sqrt{\gamma R T_{\infty_2}}$.\footnote{
  This leads to $\tilde{c}_p = \frac{c_p}{\gamma R} = \frac{1}{\gamma-1}$ 
  and $ Re_r = \frac{\rho_{\infty_2} a_{\infty_2} L}{\mu_{\infty_2}} 
  = \frac{\rho_{\infty_2} u_{\infty_2} L}{\mu_{\infty_2}} \frac{a_{\infty_2}}{u_{\infty_2}} = \frac{Re_{\infty_2}}{M_{\infty_2}}$.
}
The normalized equation of state (EOS) for an ideal gas is $\pres^*_m = \gamma^{-1} \rho^*_m T^*_m = \rho^*_m e^*_m(\gamma-1)$.

In this study, we use the nondimensionalized form \eqnref{cns-gov-nondimension}, 
and we omit the superscript ($*$) hereafter.

\subsection{Finite volume discretization}

We denote by $\Omega_{m_h} := \cup_{\ell=1}^\Ne{m} K_{m_{\ell}}$ the mesh containing a finite
collection of non-overlapping elements, $K_{m_{\ell}}$, that partition $\Omega_m$.
For example, in a two-dimensional Cartesian coordinates system, we have 
$\Ne{m} = \Nxe{m} \times \Nze{m}$.
For clarity, we abbreviate the subscript $m$ in this section.

Integrating \eqnref{cns-gov-compact} over elements, applying the divergence theorem,
and introducing a numerical flux, 
$\nb\cdot \Fcal^*$, we obtain a cell-centered FV scheme for the $\ell$ element,
\begin{align}
\eqnlab{cns-fv}
  \DD{\overline{\qb}_\ell}{t} 
  = - \frac{1}{\snor{\K_\ell}}\int_{\pK_\ell} \nb \cdot \Fcal^* d\pK
  = - \frac{1}{\snor{\K_\ell}}\sum_{f\in\pK_\ell}\int_{f} \nb \cdot \Fcal^* df ,
\end{align}
where $\nb$ is the outward unit normal vector on the boundary $\pK$ of the element $\K$,
$f$ is the elemental face of $\pK$, 
 $\overline{\qb}_\ell = \snor{\K_\ell}^{-1} \int_{\K_\ell} \DD{\qb}{t} d\K$ is the average state variable 
in $\K_\ell$, and $\snor{\K_\ell}$ is the Lebesgue measure of element $\K_\ell$. 

The numerical flux $\Fcal^*=\Fcal^{I^*} - \Fcal^{V^*}$ is composed of two parts: inviscid and viscous. 
For the inviscid part, we employ the upwind flux based on Roe's approximation, \cite{roe1986characteristic}:  
\begin{align*}
\nb \cdot \Fcal^{I^*} 
 &= \half\LRp{\Fcal^{I}(\qb^l)  + \Fcal^{I}(\qb^r )} \cdot \nb
 + \frac{|A|}{2}\LRp{\qb^r \nb^r + \qb^l \nb^l} \cdot \nb ,
\end{align*}
where 
$\qb^l$ and $\qb^r$ are the reconstructed values from the left and the right sides of the elemental face $f$; 
$|A|:=\mc{R} |\Lambda| \mc{R}^{-1}$;
$A:=\dd{\Fcal^I\cdot\nb}{\qb} = \mc{R} \Lambda \mc{R}^{-1}$ is the flux Jacobian; 
and $\mc{R}$ and $\Lambda$ are eigenvectors and eigenvalues of the flux Jacobian (details can be found in \cite{toro2013riemann}).
For the viscous part, inspired by \cite{nishikawa2011two},
we first compute the common velocity ($\hat{\ub}$), 
common velocity gradient ($\widehat{\Grad \ub}$), and 
common temperature gradient ($\widehat{\Grad T}$) at the elemental face $f$ 
and then evaluate the viscous flux 
$$
\nb \cdot \Fcal^{V^*} := \nb \cdot \Fcal^{V}\LRp{ \hat{\ub},\widehat{\Grad \ub},\widehat{\Grad T} }.
$$
 
\subsubsection*{Two-dimensional $(x,z)$ uniform grid}
With a two-dimensional uniform mesh, \eqnref{cns-fv} for the $\ell=(i,j)$ element becomes 
\begin{align}
  \DD{\overline{\qb}_{(i,j)}}{t} 
  = 
   - \frac{1}{\dx}\LRp{\Fbs_{(\iphalf,j)}  - \Fbs_{(\imhalf,j)}} 
   - \frac{1}{\dz}\LRp{\Gbs_{(i,\jphalf)}  - \Gbs_{(i,\jmhalf)}},
  \eqnlab{euler-fv-cartesian-2d}   
\end{align}
where 
$\Fbs = \Fb^{I^*} - \Fb^{V^*}$,
$\Gbs = \Gb^{I^*} - \Gb^{V^*}$,
$\Fb^{I^*} = 
  \LRp{\rho u, \rho u u + \pres, \rho u w, \rho u H}^{*^T}$, 
$\Gbs^{I^*} = 
  \LRp{\rho w, \rho w u, \rho w w + \pres, \rho w H}^{*^T}$,
$\Fb^{V^*} = 
  \LRp{0, \sigma_{xx}, \sigma_{xz}, \sigma_{xx}u + \sigma_{xz} w - \Pi_x}^{*^T}$, and 
$\Gbs^{V^*} = 
  \LRp{0, \sigma_{zx}, \sigma_{zz}, \sigma_{zx}u + \sigma_{zz} w - \Pi_z}^{*^T}$.  

For the cell-centered second-order FV, 
we compute the numerical fluxes in \eqnref{euler-fv-cartesian-2d} using linearly reconstructed variables. 
Since each element has only cell-averaged values, 
the reconstruction requires information from adjacent elements. 
For the inviscid flux ($\Fb^{I^*}\text{ and }\Gb^{I^*}$), 
we compute the cell-centered gradients $\overline{\Grad \qb}$ using the least-square (LS) scheme \cite{syrakos2017critical}.\footnote{
  In uniform mesh, LS schemes approximate $x$ and $z$ directional gradients by 
  \begin{align*}
    \overline{\dd{\qb}{x}}_{(i,j)} & \approx \frac{\overline{\qb}_{(i+1,j)} - \overline{\qb}_{(i-1,j)}  }{2\dx} + \mathcal{O}((\dx)^2),\\ 
    \overline{\dd{\qb}{z}}_{(i,j)} & \approx \frac{\overline{\qb}_{(i,j+1)} - \overline{\qb}_{(i,j+1)}  }{2\dz} + \mathcal{O}((\dz)^2).
  \end{align*}
} 
With the gradients, the left and the right conserved variables ($\qb^l$ and $\qb^r$) 
are obtained by 
\begin{align*}
  \qb_{(i+\half,j)}^l &= \overline{\qb}_{(i,j)}   + \overline{\dd{\qb}{x}}_{(i,j)} \frac{\dx}{2}, \quad
  \qb_{(i+\half,j)}^r = \overline{\qb}_{(i+1,j)} - \overline{\dd{\qb}{x}}_{(i+1,j)} \frac{\dx}{2}  
\end{align*}
at the $x$-face of $(i+\half,j)$ 
and 
\begin{align*}
  \qb_{(i,j+\half)}^l &= \overline{\qb}_{(i,j)}   + \overline{\dd{\qb}{z}}_{(i,j)} \frac{\dz}{2}, \quad
  \qb_{(i,j+\half)}^r = \overline{\qb}_{(i,j+1)} - \overline{\dd{\qb}{z}}_{(i,j+1)} \frac{\dz}{2} 
\end{align*}
at the $z$-face of $(i,j+\half)$. 

For the viscous flux ($\Fb^{V^*}\text{ and }\Gb^{V^*}$), 
we compute the cell-centered gradients of velocity and temperature ($\overline{\Grad \ub}$ and $\overline{\Grad T}$) by using the LS scheme, and  we compute
the common velocity of $\hat{\ub}$,
\begin{align*}
  \hat{\ub}_{(i+\half,j)} = \half\LRp{ \overline{\ub}_{(i,j)} + \overline{\ub}_{(i+1,j)} }, \quad
  \hat{\ub}_{(i,j+\half)} = \half\LRp{ \overline{\ub}_{(i,j)} + \overline{\ub}_{(i,j+1)} },
\end{align*}
by taking an arithmetic average. 
As for the common gradients of $\widehat{\Grad \ub}$ and $\widehat{\Grad T}$, 
we obtain normal gradients by finite difference approximation
\begin{align*}
  \widehat{\dd{u}{x}}_{(i+\half,j)} &= \frac{1}{\dx}\LRp{ \overline{u}_{(i+1,j)} - \overline{u}_{(i,j)} },& 
  \widehat{\dd{u}{z}}_{(i,j+\half)} &= \frac{1}{\dz}\LRp{ \overline{u}_{(i,j+1)} - \overline{u}_{(i,j)} }, \\
  \widehat{\dd{w}{x}}_{(i+\half,j)} &= \frac{1}{\dx}\LRp{ \overline{w}_{(i+1,j)} - \overline{w}_{(i,j)} },&   
  \widehat{\dd{w}{z}}_{(i,j+\half)} &= \frac{1}{\dz}\LRp{ \overline{w}_{(i,j+1)} - \overline{w}_{(i,j)} },  \\
  \widehat{\dd{T}{x}}_{(i+\half,j)} &= \frac{1}{\dx}\LRp{ \overline{T}_{(i+1,j)} - \overline{T}_{(i,j)} },&
  \widehat{\dd{T}{z}}_{(i,j+\half)} &= \frac{1}{\dz}\LRp{ \overline{T}_{(i,j+1)} - \overline{T}_{(i,j)} }, 
\end{align*}
and tangential gradients by averaging two adjacent cell-centered gradients,
\begin{align*}
  \widehat{\dd{u}{z}}_{(i+\half,j)} &= \half\LRp{ \overline{\dd{u}{z}}_{(i,j)} + \overline{\dd{u}{z}}_{(i+1,j)} },&
  \widehat{\dd{u}{x}}_{(i,j+\half)} &= \half\LRp{ \overline{\dd{u}{z}}_{(i,j)} + \overline{\dd{u}{z}}_{(i,j+1)} }, \\
  \widehat{\dd{w}{x}}_{(i,j+\half)} &= \half\LRp{ \overline{\dd{w}{z}}_{(i,j)} + \overline{\dd{w}{z}}_{(i,j+1)} }, &
  \widehat{\dd{w}{z}}_{(i+\half,j)} &= \half\LRp{ \overline{\dd{w}{z}}_{(i,j)} + \overline{\dd{w}{z}}_{(i+1,j)} }, \\
  \widehat{\dd{T}{x}}_{(i,j+\half)} &= \half\LRp{ \overline{\dd{T}{z}}_{(i,j)} + \overline{\dd{T}{z}}_{(i,j+1)} },&
  \widehat{\dd{T}{z}}_{(i+\half,j)} &= \half\LRp{ \overline{\dd{T}{z}}_{(i,j)} + \overline{\dd{T}{z}}_{(i+1,j)} }.
\end{align*}

\section{IMEX coupling framework}
\seclab{IMEXCoupling}


In this section we introduce IMEX coupling methods 
in which we treat one model implicitly and the other model explicitly. 
IMEX coupling schemes naturally support two-way coupling 
for all stages. 
This means that at every time stage 
the continuity of the heat and the horizontal momentum fluxes is ensured by construction. For example, at the $i$th stage, 
$$ \hat{\sigma}_{xz}^{(i)} = b_u (u^{(i)}_2 - u^{(i)}_1), 
\text{ and } \hat{\Pi}_z^{(i)} = -b_T (T^{(i)}_2 - T^{(i)}_1). $$ 
We denote two-way coupling at every stage as tight coupling (TC),
which is represented by double arrows in Figure \figref{coupling-diagram}(a). 

With two domains that exhibit different stiffness properties,\footnote{
  For example, the atmospheric model and the ocean model have $110 \si{\kilo\metre}$ and $30\si{\kilo\metre} \sim 60 \si{\kilo\metre}$ grid sizes \cite{golaz2019doe}. 
  Typical acoustic wave speed is about $1500 \si{\metre\per\second}$ for seawater \cite{cristini2012some} and $340 \si{\metre\per\second}$ for the atmosphere. 
} IMEX methods
represent a suitable alternative to monolithic implicit methods by
alleviating the cost of applying an implicit method on the less stiff partition.   
Moreover, the IMEX tight coupling approach supports a high-order solution in time,
reduces the computational cost arising from an implicit monolithic approach,
and allows a large timestep size compared with that of a fully explicit coupling approach.
However, the tight coupling requires frequent communications between the two models across the interface 
and also sequentially advances each model (e.g., runs the atmosphere model first and then updates the ocean model). 
Furthermore, each model is advanced with the same timestep size.

To confer more timestepping granularity, one may consider loosening the tight coupling  
so that each model can march with a different timestep size \cite{constantinescu2007multirate,schlegel2009multirate,constantinescu2013extrapolated,seny2013multirate,sandu2019class}, for example, 
$\dt_{ocn} = N_s\dt_{atm}$, or can run simultaneously. 
Therefore, we consider two loose coupling (LC)  schemes: concurrent and sequential.\footnote{ 
  Here, we borrow the terms concurrent and sequential modes used by \cite{connors2019stability}.
} 

For concurrent coupling (CC), 
the two models run simultaneously
by two-way coupling at each colocated time level. For example, the
 atmospheric model and ocean model are two-way coupled at every $\dt_{ocn} = 2\dt_{atm}$ in Figure \figref{coupling-diagram}(b).
For sequential coupling (SC), we run one model first and then advance the other model.
For example, 
as can be seen in Figure \figref{coupling-diagram}(c),
after two-way coupling has occurred at the beginning of each colocated time level, 
we advance the ocean model first while freezing the interface data. 
Then we update the atmospheric model by providing the updated solution from the ocean model at each stage.

Obviously, in both the concurrent and the sequential coupling approaches, 
the heat and the horizontal momentum fluxes are not always continuous at every stage in general. For example,  
at the  $i(>1)$th stage, 
\begin{align*}
\hat{\sigma}_{1xz}^{(i)} = b_u (u^{(i)}_2 - u^{n}_1) &\ne b_u (u^{n}_2 - u^{(i)}_1) = \hat{\sigma}_{2xz}^{(i)}, \\
\hat{\Pi}_{1z}^{(i)} = - b_T (T^{(i)}_2 - T^{n}_1) &\ne -b_T (T^{n}_2 - T^{(i)}_1) = \hat{\Pi}_{2z}^{(i)}  
\end{align*}
for concurrent coupling, and
\begin{align*}
\hat{\sigma}_{1xz}^{(i)} = b_u (u^{(i)}_1 - u^{(i)}_2) &\ne b_u (u^{n}_1 - u^{(i)}_2) = \hat{\sigma}_{2xz}^{(i)}, \\
\hat{\Pi}_{1z}^{(i)} = - b_T (T^{(i)}_1 - T^{(i)}_2) &\ne -b_T (T^{n}_1 - T^{(i)}_2) = \hat{\Pi}_{2z}^{(i)} 
\end{align*}
for sequential coupling. 
The interface data is not always available during the time level.
interval. Consequently, compared with tight coupling, 
 loose coupling leads to a degraded rate of convergence in time 
but can increase the solution accuracy when dynamics rapidly changes because of substeps (see ARK3 in Table \tabref{khi-re5000-imex-coupling}).

\begin{figure}[h!t!b!]
  \centering
  \centering
  \subfigure[Tight coupling (TC)]{
    \includegraphics[trim=2.5cm 6.6cm 3.2cm 6.0cm,clip=true,
      width=0.82\textwidth]{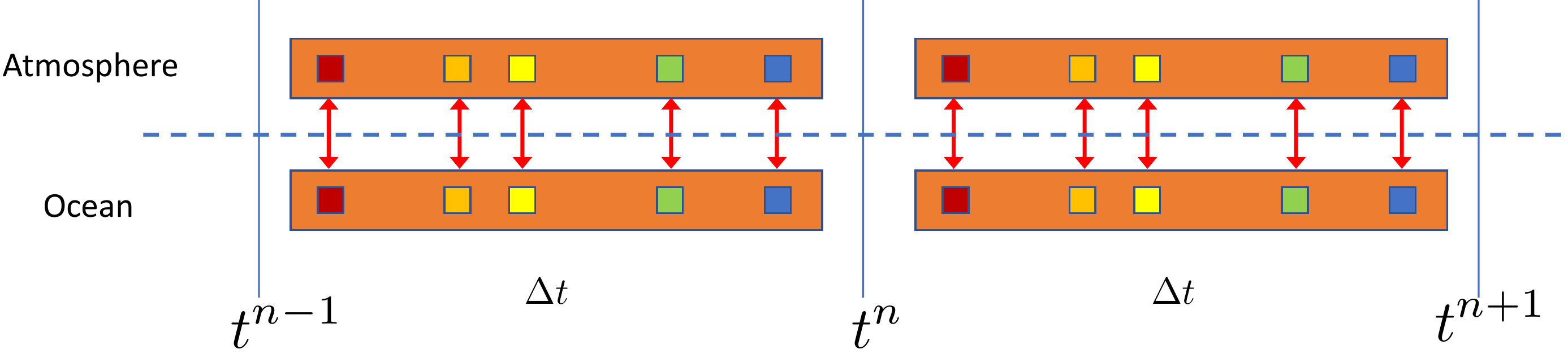}  
  }
  \subfigure[Concurrent coupling with two substeps (CC2)]{
    \includegraphics[trim=2.5cm 6.0cm 3.2cm 4.7cm,clip=true,
      width=0.82\textwidth]{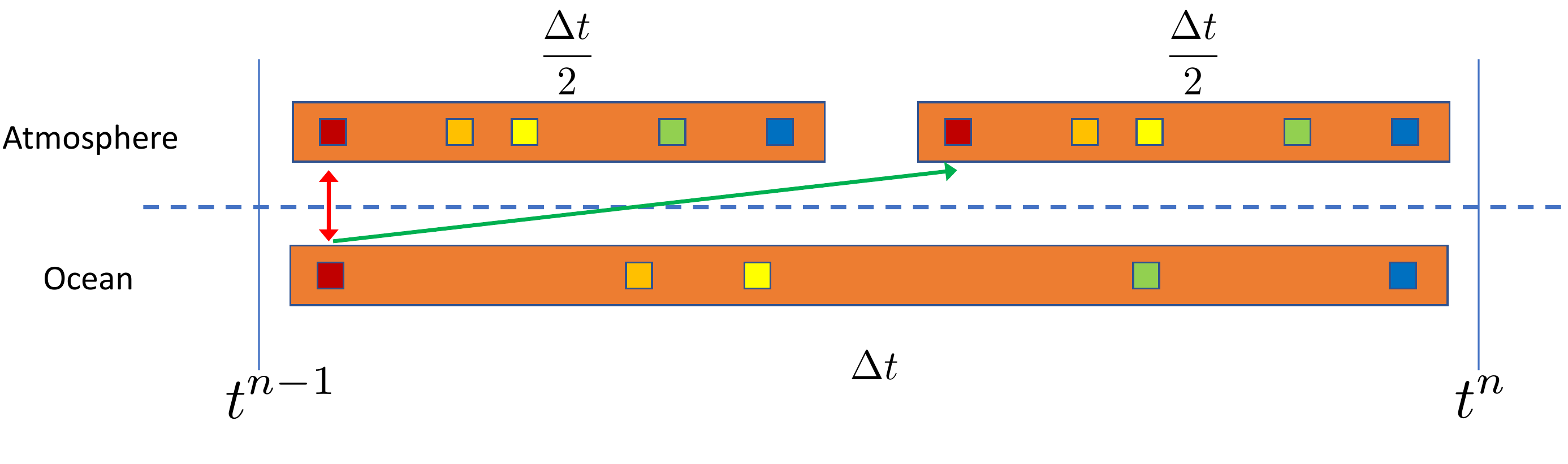}  
  }
  \subfigure[Sequential coupling with two substeps (SC2)]{
    \includegraphics[trim=2.5cm 6.6cm 3.2cm 4.7cm,clip=true,
      width=0.82\textwidth]{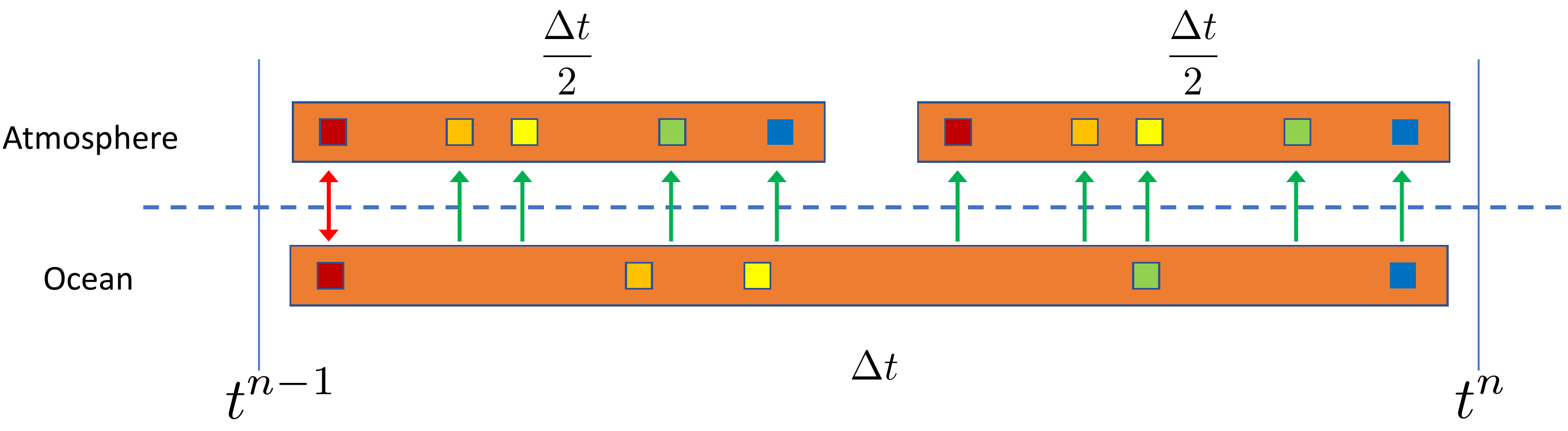}  
  }
  \caption{Coupling diagrams: (a) tight coupling, 
  (b) concurrent with two substeps (CC2), and 
  (c) sequential coupling with two substeps (SC2).
  Five-time stages are illustrated by 
  small (\sqbox{cred}, \sqbox{corange}, \sqbox{cyellow}, \sqbox{cgreen}, and \sqbox{cblue}) boxes.
  The single (green) and the double (red) arrows describe one-way and two-way couplings, respectively. 
  For tight coupling, two-way coupling occurs at every stage, 
  whereas for loose coupling, 
  two-way coupling is only performed 
  at each colocated time level.
  After the colocated time level, 
  for concurrent coupling, 
  the two models run simultaneously without updating interface data; and 
  for sequential coupling, 
  the ocean model run first while freezing the interface data, and then 
  the atmospheric model is marched by providing the updated solution from the ocean model at each stage.
}

  \figlab{coupling-diagram}
\end{figure}


Without loss of generality, 
in this study we refer to the model on $\Omega_1$ as the ocean model 
and to the model on $\Omega_2$ as the atmospheric model for convenience.
For TC, we treat the ocean implicitly and the atmosphere explicitly 
within the IMEX coupling framework. 
For LC, we advance the ocean model using IMEX methods 
but march the atmospheric model explicitly using the explicit part of the IMEX methods. The interface is treated explicitly. 
Detailed algorithms are shown in the following subsections.

\subsection{IMEX-RK tight coupling  methods}
\seclab{IMEX-RK-coupling}

We discretize \eqnref{cns-gov-nondimension} by using the FV method in \eqnref{euler-fv-cartesian-2d},
which yields the following semi-discretized coupled systems,
\begin{subequations}
\begin{align}
  \DD{\qb_1}{t} & = \Rb_1(\qb_1,\qb_2) \text{ on } \Omega_1, \\  
  \DD{\qb_2}{t} & = \Rb_2(\qb_1,\qb_2) \text{ on } \Omega_2,
\end{align}
\eqnlab{semi-discretized-coupled-model}
\end{subequations}
with $\qb_m=(\qb_{m_1},\qb_{m_2},\cdots,\qb_{m_\Ne{m} })^T$ and 
$\Rb_m=(\Rb_{m_1},\Rb_{m_2},\cdots,\Rb_{m_\Ne{m} })^T$ for $m=1,2$.
Here, we omit the overline (which stands for cell-averaged quantity) for brevity. 

Under the assumption that 
the ocean model is stiffer than the atmospheric model,
we cast \eqnref{semi-discretized-coupled-model} into 
$s$-stage IMEX-RK methods \cite{ascher1997implicit,pareschi2005implicit,Kennedy2003additive, giraldo2013implicit},  
\begin{subequations}
   \eqnlab{IMEXRK}
  \begin{align}
   \eqnlab{IMEXRK-qi}
     \Qb^{(i)} &= \Qb^n
       + \dtt\sum_{j=1}^{i-1}a_{ij} \fb_j
       + \dtt\sum_{j=1}^{i}\tilde{a}_{ij} \gb_j,\quad i=1,\hdots,s,\\
   \eqnlab{IMEXRK-qn}
     \qb^{n+1} &= \qb^n
       + \dtt\sum_{i=1}^{s}b_{i} \fb_i
       + \dtt\sum_{i=1}^{s}\tilde{b}_{i} \gb_i,
  \end{align}
\end{subequations}
where $\qb=(\qb_1,\qb_2)^T$, 
$\Qb=(\Qb_1,\Qb_2)^T$, 
$\fb=(0,\Rb_2)^T$,
$\gb=(\Rb_1,0)^T$,
$\fb_i = \fb\LRp{t^n+c_i\dtt, \Qb^{(i)}}$, 
$\gb_i = \gb\LRp{t^n+\tilde{c}_i\dtt,\Qb^{(i)}}$,
$\qb^n = \qb(t^n)$;
$\Qb^{(i)}$ is the $i$th intermediate state;
and $\dtt$ is the timestep size.
The scalar coefficients $a_{ij}$, $\tila_{ij}$, $b_i$, $\tilb_i$,
$c_i$, and $\tilc_i$ determine all the properties of a given IMEX-RK
scheme.

The intermediate state $\Qb^{(i)}$ in \eqnref{IMEXRK-qi} is 
  \begin{align}
   \eqnlab{IMEXRK-qi-AO}
     \begin{pmatrix}
       \Qb_1^{(i)} \\
       \Qb_2^{(i)} 
     \end{pmatrix}
     -\dtt \tilde{a}_{ii}
     \begin{pmatrix}
      \Rb_1 (\Qb^{(i)}) \\
       0
     \end{pmatrix}
       &= 
     \begin{pmatrix}
       \qb_1^n \\
       \qb_2^n  
     \end{pmatrix}
       + 
     \dtt \sum_{j=1}^{i-1}
     \begin{pmatrix}
       a_{ij} \Rb_{1j}\\
       \tilde{a}_{ij} \Rb_{2j}
     \end{pmatrix}
     =: 
     \begin{pmatrix}
       \check{\Qb}_1\\
       \check{\Qb}_2
     \end{pmatrix}
  \end{align}
  with $\Rb_{ij}:=\Rb_i(\Qb^{(j)})$. 
Thanks to the semi-implicit structure,\footnote{
This can be seen as a block Gauss--Seidel structure.} we explicitly update $\Qb_2^{(i)}$ from the second row in \eqnref{IMEXRK-qi-AO}
and then solve for $\Qb_1^{(i)}$ implicitly. The next timestep solution is obtained by \eqnref{IMEXRK-qn} after solving for all the intermediate stages: 
\begin{subequations}
\begin{align}
  \eqnlab{imex-rk-update-a}
  {\qb}_1^{n+1} &= \qb_1^n + \dtt\sum_{i=1}^{s} \tilde{b}_{i} \Rb_{1i}, \\
  \eqnlab{imex-rk-update-b}
  {\qb}_2^{n+1} &= \qb_2^n + \dtt\sum_{i=1}^{s} b_{i} \Rb_{2i}. 
\end{align} 
\eqnlab{imex-rk-update}
\end{subequations}

These steps are summarized in Algorithm \algref{IMEXRK-Coupling}.
Note that at the time of computing the right-hand sides, $\Rb_1$ and $\Rb_2$, 
both the solution $\Qb_1$ and the solution $\Qb_2$ are available at the same time; hence,
 the bulk fluxes in \eqnref{bulk-flux} are conservative at each stage. 


\begin{algorithm}[h!t!b!]
  \begin{algorithmic}[1]
    \ENSURE Given solution state $\qb^n$, compute its next solution state $\qb^{n+1}$ 
    under the assumption that $\Rb_1$ is stiffer than $\Rb_2$. 
    Let $\Rb_{1i}:=\Rb_1(\Qb_1^{(i)},\Qb_2^{(i)})$
    and $\Rb_{2i}:=\Rb_2(\Qb_1^{(i)},\Qb_2^{(i)})$.
      \FOR{$i=1$ to $s$}
        \IF {$\tilde{a}_{ii} = 0$}
          \STATE $\Qb_1^{(i)} \gets \qb_1^n$
          \STATE $\Qb_2^{(i)} \gets \qb_2^n$
        \ELSE
          \STATE ${\Qb}_2^{(i)} \gets \qb_2^n + \dtt\sum_{j=1}^{i-1} a_{ij} \Rb_{2j}$
          \STATE $\check{\Qb}_1^{(i)} \gets \qb_1^n + \dtt\sum_{j=1}^{i-1} \tilde{a}_{ij} \Rb_{1j}$
          \STATE Solve for $\Qb_1^{(i)}$ according to \eqref{eq:IMEXRK-qi-AO} 
        \ENDIF
        \STATE Send/Receive interface information from $\Omega_1$ to $\Omega_2$, vice versa.
        \STATE Compute stage right-hand sides, $\Rb_{1i}$ and $\Rb_{2i}$ 
      \ENDFOR
      \STATE ${\qb}_1^{n+1} \gets \qb_1^n + \dtt\sum_{i=1}^{s} \tilde{b}_{i} \Rb_{1i}$
      \STATE ${\qb}_2^{n+1} \gets \qb_2^n + \dtt\sum_{i=1}^{s} {b}_{i} \Rb_{2i}$
  \end{algorithmic}
  \caption{Partitioned IMEX-RK Coupling}
  \alglab{IMEXRK-Coupling}
\end{algorithm}



To avoid nonlinear solves for $\Qb_1$, we can linearize $\Rb_1$ and use the same IMEX methods \cite{giraldo2013implicit,kang2019imex} to solve only linear systems at each stage. We do so by choosing $\Lb$, a linear operator containing stiff components of $\Rb_1$, and then constructing a term by subtracting $\Lb$ from $\Rb_1$ in the hope that this term is nonstiff, that is,
$\Nb:= \Rb_1 - \Lb$.  
By taking 
$\fb=(\Nb,\Rb_2)^T$ and
$\gb=(\Lb,0)^T$, 
the intermediate state $\Qb^{(i)}$ in \eqnref{IMEXRK-qi} becomes 
\begin{align}
 \eqnlab{IMEXRK-qi-AO-imex}
   \begin{pmatrix}
     \Qb_1^{(i)} \\
     \Qb_2^{(i)} 
   \end{pmatrix}
   -\dtt \tilde{a}_{ii}
   \begin{pmatrix}
     \Lb (\Qb^{(i)}) \\
     0  
   \end{pmatrix}
     &= 
   \begin{pmatrix}
     \qb_1^n \\
     \qb_2^n  
   \end{pmatrix}
     + 
   \dtt \sum_{j=1}^{i-1}
   \begin{pmatrix}
     a_{ij}\Nb_{j} + \tilde{a}_{ij} \Lb_{j} \\
     a_{ij} \Rb_{2j}
   \end{pmatrix}
\end{align}
with $\Lb_{j}:=\Lb(\Qb^{(j)})$ and $\Nb_{j}:=\Rb_{1j} - \Lb_j$. 
This form requires only one linear solve to update $\Qb_1^{(i)}$.
We update the next timestep solution as follows:  
\begin{subequations}
\begin{align}
  \eqnlab{imex-rk-imex-update-a}
  {\qb}_1^{n+1} &= \qb_1^n + \dtt\sum_{i=1}^{s} \LRp{b_{i} \Nb_{i} + \tilde{b}_{i} \Lb_{i}},\\
  \eqnlab{imex-rk-imex-update-b}  
  {\qb}_2^{n+1} &= \qb_2^n + \dtt\sum_{i=1}^{s} b_{i} \Rb_{2i}.
\end{align} 
\eqnlab{imex-rk-imex-update}
\end{subequations}
The algorithm is summarized in Algorithm \algref{IMEXRK-Coupling-imex}.

\begin{algorithm}[h!t!b!]
  \begin{algorithmic}[1]
    \ENSURE Given solution state $\qb^n$, compute its next solution state $\qb^{n+1}$ 
    under the assumption that $\Rb_1$ is stiffer than $\Rb_2$,
     and $\Lb$ contains stiff components of $\Rb_1$. Let $\Rb_{1i}:=\Rb_1(\Qb_1^{(i)},\Qb_2^{(i)})$,
        $\Rb_{2i}:=\Rb_2(\Qb_1^{(i)},\Qb_2^{(i)})$,
        $\Lb_{i}:=\Lb(\Qb_1^{(i)},\Qb_2^{(i)})$, and
        $\Nb_i = \Rb_{1i} - \Lb_i$.
      \FOR{$i=1$ to $s$}
        \IF {$\tilde{a}_{ii} = 0$}
          \STATE $\Qb_1^{(i)} \gets \qb_1^n$
          \STATE $\Qb_2^{(i)} \gets \qb_2^n$
        \ELSE
          \STATE ${\Qb}_2^{(i)} \gets \qb_2^n + \dtt\sum_{j=1}^{i-1} a_{ij} \Rb_{2j}$
          \STATE $\check{\Qb}_1^{(i)} \gets \qb_1^n 
                + \dtt\sum_{j=1}^{i-1} \LRp{a_{ij} \Nb_j + \tilde{a}_{ij} \Lb_j }$
          \STATE Linear solve for $\Qb_1^{(i)}$ 
        \ENDIF
        \STATE Send/Receive interface information from $\Omega_1$ to $\Omega_2$, vice versa.
        \STATE Compute stage right-hand-sides, $\Rb_{2i}$, $\Nb_{i}$ and $\Lb_i$ 
      \ENDFOR
      \STATE ${\qb}_1^{n+1} \gets \qb_1^n + \dtt\sum_{i=1}^{s} \LRp{b_i \Nb_i + \tilde{b}_{i} \Lb_i }$
      \STATE ${\qb}_2^{n+1} \gets \qb_2^n + \dtt\sum_{i=1}^{s} b_{i} \Rb_{2i}$
  \end{algorithmic}
  \caption{Partitioned IMEX-RK Coupling with Linearized Stiff Flux }
  \alglab{IMEXRK-Coupling-imex}
\end{algorithm}

\subsubsection{The linear operator $\Lb$}

We construct a stiff linear operator $\Lb$ so that the numerical stiffness of  $\Nb(=\Rb_1 - \Lb)$ is relaxed.
Since the Jacobian of the right-hand side $\dd{\Rb_1}{\qb}$ contains the stiffness, 
a linear operator can be chosen as an approximation of the Jacobian to some degree. For example, with a two-dimensional uniform mesh, 
we can define the linear operator for $(i,j)$ element by 
\begin{align}
  \eqnlab{linear-rhs}
  \Lb (\tilde{\qb},\qb)_{(i,j)}:= 
  - \frac{1}{\dx}\LRp{\Fbs_{L,(\iphalf,j)}  - \Fbs_{L,(\imhalf,j)}}  
  - \frac{1}{\dz}\LRp{\Gbs_{L,(i,\jphalf)}  - \Gbs_{L,(i,\jmhalf)}},
\end{align}
where $\tilde{\qb}$ is the reference state and 
$ \Fbs_L := \dd{\Fbs}{\qb}\bigg|_{\tilde{\qb}} \qb $ and 
$ \Gbs_L := \dd{\Gbs}{\qb}\bigg|_{\tilde{\qb}} \qb $ 
are $x$ and $z$ directional linear fluxes.

If the inviscid part is stiffer than the viscous part, then  the linearized viscous part can be dropped from \eqnref{linear-rhs},
 leading to 
\begin{align}
  \eqnlab{linear-inviscid}
  \Lb^I (\tilde{\qb},\qb)_{(i,j)}:= 
  - \frac{1}{\dx}\LRp{\Fb^{I^*}_{L,(\iphalf,j)}  - \Fb^{I^*}_{L,(\imhalf,j)}}  
  - \frac{1}{\dz}\LRp{\Gb^{I^*}_{L,(i,\jphalf)}  - \Gb^{I^*}_{L,(i,\jmhalf)}} 
\end{align}
with 
$ \Fb^{I^*}_L := \dd{\Fb^{I^*}}{\qb}\bigg|_{\tilde{\qb}} \qb $ and 
$ \Gb^{I^*}_L := \dd{\Gb^{I^*}}{\qb}\bigg|_{\tilde{\qb}} \qb $.

From a computational point of view,
both \eqnref{linear-rhs} and \eqnref{linear-inviscid} 
can be expensive 
without a proper preconditioner 
because of their large number of degrees of freedom and global communication in $(x,z)$ domains.
To reduce the computational cost, 
we can adapt the dimensional splitting approach, 
used in horizontally explicit and vertically implicit (HEVI) methods \cite{giraldo2013implicit,weller2013runge,gardner2018implicit}.  
For HEVI, we define the linear operator by 
\begin{align}
  \eqnlab{linear-inviscid-hevi}
  \Lb^{z} (\tilde{\qb},\qb)_{(i,j)}:= 
  - \frac{1}{\dz}\LRp{\Gb^{I^*}_{L,(i,\jphalf)}  - \Gb^{I^*}_{L,(i,\jmhalf)}} 
\end{align}
with only the linear inviscid flux in the  vertical direction.

\subsection{IMEX-RK loose coupling  methods}
\seclab{IMEX-RK-loose-coupling}

Now we consider a practical situation where there is a fixed time-scale difference between the two domains. For example, the timestep size of the ocean model, $\dt_1$, is $N_s$ times larger than that of the atmosphere, $\dt_2$, namely, 
$\dt_1 = N_s \dt_2$. 
We let $\qb^{n+1}_1$ and $\qb^{n+\frac{k}{N_s}}_2$ be numerical approximations of $\qb_1(t^n+\dt_1)$
 and $\qb_2(t^n+k\dt_2)$, respectively. 

We advance the ocean model by using $s$-stage IMEX-RK schemes, which can also provide 
the dense output formulas of order $p^*$ \cite{Kennedy2003additive}, 
\begin{align*}
\Qb^*(t^n+\theta \dt) := \qb^n + \dt \sum_{i=1}^{s} B^*_i(\theta) \Nb_i + \widehat{B}^*_i(\theta) \Lb_i.
\end{align*}
Here,  
%
$\Nb_i:= \Nb(t^n + c_i \dt,Q^{(i)})$, 
$\Lb_i:=\Lb(t^n + \tilde{c}_i \dt,Q^{(i)})$, 
$\theta \in [0,1]$, 
$B^*_i(\theta)= \sum_{j=1}^{p^*} b^*_{ij} \theta^j$, and $b^*_{ij}$ is a matrix of coefficients of size $s \times p^*$.
With the ARK schemes satisfying  $\widehat{B}^*_i(\theta)= B^*_i(\theta)$ and $\tilde{c}_i=c_i$, 
we have the time polynomial for ARK methods,\footnote{
  For example, ARK2 ($s=3$, $p^*=2$) has 
  \begin{align*}
  b^*=
  \begin{pmatrix}
    \frac{1}{\sqrt{2}}& -\frac{1}{2\sqrt{2}} \\
    \frac{1}{\sqrt{2}}& -\frac{1}{2\sqrt{2}}\\
    1-\sqrt{2} & \frac{1}{\sqrt{2}}
  \end{pmatrix}.
  \end{align*}
}\cite{giraldo2013implicit}
\begin{align}
  \eqnlab{DenseOutput-ARKt}
  \Qb^*(t^n+\theta \dt) := \qb^n + \dt \sum_{i=1}^{s} B^*_i(\theta) \Rb_i \,.
\end{align}

For sequential coupling (SC), 
we integrate the ocean model using $s$-stage IMEX-RK 
and the atmospheric model using s-stage RK (explicit part of IMEX-RK).
We construct the time polynomial \eqnref{DenseOutput-ARKt} and 
evaluate $\Qb^*$ at every RK stage during the substeps.
For example, with two substeps, $\Qb^*$ is interpolated 
at $t^n + (k-1+c_i)\dt_2$ for $k=1,2$ and $i=1,2\cdots, s$.
For concurrent coupling (CC), 
we simply take $\Qb^*(t^n+\theta \dt)=\qb^n$. 
The algorithm is summarized in Algorithm \algref{IMEXRK-Coupling-imex-sc}.

We call the sequential and the concurrent couplings with $N_s$ substeps by SC$\{N_s\}$ 
and CC$\{N_s\}$, respectively.  

%

\begin{algorithm}[h!t!b!]
  \begin{algorithmic}[1]
    \ENSURE Given solution state $\qb^n_1$ and $\qb^n_2$, 
      compute their next solution states $\qb^{n+1}_1$ and $\qb^{n+1}_2$ 
      under the assumption of $\dt_1 = N_s\dt_{2}$  
      and $\Lb$ containing stiff components of $\Rb_1$. 
      Let $\Rb_{1i}:=\Rb_1(\Qb_1^{(i)},\qb^n_2)$,
        $\Rb_{2i}:=\Rb_2(\Qb_1^{*,(i)},\Qb_2^{(i)})$,
        $\Qb_1^{*,(i)}:= \Qb_1^{*}(t^n + (k-1+c_i)\dt_2)$,
        $\Lb_{i}:=\Lb(\Qb_1^{(i)},\qb^n_2)$, and
        $\Nb_i = \Rb_{1i} - \Lb_i$.
      \STATE Exchange interface information from atmospheric model to ocean model
      \FOR{$i=1$ to $s$}
        \IF {$\tilde{a}_{ii} = 0$}
          \STATE $\Qb_1^{(i)} \gets \qb_1^n$
        \ELSE
          \STATE $\check{\Qb}_1^{(i)} \gets \qb_1^n 
                + \dtt_1\sum_{j=1}^{i-1} \LRp{a_{ij} \Nb_j + \tilde{a}_{ij} \Lb_j }$
          \STATE Linear solve for $\Qb_1^{(i)}$ 
        \ENDIF        
        \STATE Compute stage right-hand sides, i.e., $\Nb_{i}$ and $\Lb_i$ 
      \ENDFOR
      \STATE ${\qb}_1^{n+1} \gets \qb_1^n + \dtt_1\sum_{i=1}^{s} \LRp{b_i \Nb_i + \tilde{b}_{i} \Lb_i }$
      \FOR{$k=1$ to $N_s$}
        \FOR{$i=1$ to $s$}
          \IF {$i=1$}
            \STATE $\Qb_2^{(i)} \gets \qb_2^{n+\frac{k}{N_s}}$
          \ELSE
            \STATE $\Qb_2^{(i)} \gets \qb_2^n + \dtt_2 \sum_{j=1}^{i-1} a_{ij} \Rb_{2i}$
          \ENDIF
          \STATE Interpolate $\Qb_1^{*,(i)}$
          \STATE Compute stage right-hand sides, $\Rb_{2i}$ 
        \ENDFOR
        \STATE ${\qb}_2^{n+\frac{k}{N_s}} \gets \qb_2^{n+\frac{k-1}{N_s}} + \dtt\sum_{i=1}^{s} b_{i} \Rb_{2i}$
      \ENDFOR
      
  \end{algorithmic}
  \caption{IMEX-RK Loose Coupling Methods}
  \alglab{IMEXRK-Coupling-imex-sc}
\end{algorithm}

\subsection{Mass-conserving IMEX-RK coupling}
  
Conservation of mass is the most fundamental conservation property 
because it is related to many other conservation properties such as tracer and energy.
Any imperfection in the conservation of mass will affect long-time integration,
which can generate a superficial pressure field via the equation of states 
and eventually lead to unwanted modes.~\cite{thuburn2008some} 
Thus, 
we examine the mass conservation property in the IMEX-RK coupling framework. 

We define the total mass in $\Omega=\Omega_1 \cup \Omega_2$ by
\begin{align*}
  \mass 
  &= \int_{\Omega_1} \rho d\Omega + \int_{\Omega_2} \rho d\Omega 
  = \sum_{\ell=1}^{\Ne{1}} \int_{\K_{1_\ell}} \rho d\K
  + \sum_{\ell=1}^{\Ne{2}} \int_{\K_{2_\ell}} \rho d\K \\
  & = \sum_{\ell=1}^{\Ne{1}} \bar{\rho}_{1_\ell} \snor{\K_{1_\ell}} 
        + \sum_{\ell=1}^{\Ne{2}} \bar{\rho}_{2_\ell} \snor{\K_{2_\ell}} 
    = \mass_1 + \mass_2 ,
\end{align*}
where $\mass_m = \sum_{\ell=1}^{\Ne{m}} \bar{\rho}_{m_\ell} \snor{\K_{m_\ell}}$ ; 
$\bar{\rho}_{m_\ell} = \snor{\K_{m_\ell}}^{-1} \int_{\K_{m_\ell}} \rho d\K$ 
is the mean density on $\K_{m_\ell}$.

\begin{proposition}
\theolab{imex-mass-conservation}
The IMEX-RK coupling methods with $b_i = \tilde{b}_i$ in \eqnref{imex-rk-update} 
and \eqnref{imex-rk-imex-update}
are mass conservative for system \eqnref{cns-gov} with the FV scheme, the numerical flux \eqnref{euler-fv-cartesian-2d},
and the interface condition \eqnref{rigid-lid-cond}.
\end{proposition}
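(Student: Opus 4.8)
The plan is to track the total mass $\mass = \mass_1 + \mass_2$ across one macro-step $t^n \to t^{n+1}$ and show it is unchanged. Writing $\mass_m = \sum_{\ell=1}^{\Ne{m}} \snor{\K_{m_\ell}}\,\bar\rho_{m_\ell}$, I would take the density (first) component of the update formulas \eqnref{imex-rk-update} (and of \eqnref{imex-rk-imex-update} in the linearized case), multiply by the cell volumes $\snor{\K_{m_\ell}}$, and sum over cells, obtaining
\[
  \mass_1^{n+1}-\mass_1^n = \dtt\sum_{i=1}^{s}\tilde b_i\,S_1^{(i)},\qquad
  \mass_2^{n+1}-\mass_2^n = \dtt\sum_{i=1}^{s} b_i\,S_2^{(i)},
\]
where $S_m^{(i)} := \sum_{\ell}\snor{\K_{m_\ell}}\,(\Rb_{mi})^{\rho}_{\ell}$ and $(\Rb_{mi})^{\rho}_{\ell}$ is the mass-equation residual \eqnref{cns-fv} in cell $\ell$ at stage $i$. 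The first point is that the mass equation in \eqnref{cns-gov} carries no viscous flux (the first component of $\Fcal^V$ is zero), so $(\Rb_{mi})^{\rho}_{\ell}$ equals $-\snor{\K_{m_\ell}}^{-1}$ times the sum over $\partial\K_{m_\ell}$ of the purely convective numerical mass flux $\widehat{\rho\,\ub_m\cdot\nb_m}$.

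Next I would telescope $S_m^{(i)}$. For fixed $i$ and $m$, each interior face of $\Omega_{m_h}$ is shared by two cells and is counted twice with opposite outward normals; since the numerical flux on a face is single-valued, these cancel, and $S_m^{(i)}$ collapses to minus the net numerical mass flux through $\partial\Omega_m = (\partial\Omega_m\setminus\Gamma)\cup\Gamma$. On the exterior part $\partial\Omega_m\setminus\Gamma$, the boundary conditions imposed for the two fluids (solid no-penetration walls and/or periodic conditions) transport no net mass. Hence $S_m^{(i)} = -\Phi_m^{(i)}$ with $\Phi_m^{(i)} := \sum_{f\subset\Gamma}\int_f \widehat{\rho\,\ub_m\cdot\nb_m}\,df$, the net numerical mass flux out of $\Omega_m$ through the interface at stage $i$.

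The crux is the interface term $\Phi_m^{(i)}$. The rigid-lid interface condition \eqnref{rigid-lid-cond} forces the normal velocity to vanish on $\Gamma$ (equivalently $\hat w_1 = \hat w_2 = 0$ in two dimensions), and the interface is realized through the isothermal-wall treatment \cite{jacobs2007conservative}, which enforces exactly this no-penetration state when the interface flux is assembled; therefore $\widehat{\rho\,\ub_m\cdot\nb_m}\equiv 0$ and $\Phi_1^{(i)} = \Phi_2^{(i)} = 0$ at every stage, for tight, concurrent, and sequential coupling alike --- and this holds for mass even though, in loose coupling, the momentum and heat interface fluxes need not agree across stages. More conservatively, since a single interface face-flux is shared by the adjacent ocean and atmosphere cells one has $\Phi_1^{(i)} = -\Phi_2^{(i)}$, whence $\mass^{n+1}-\mass^n = \dtt\sum_{i=1}^{s}(b_i-\tilde b_i)\,\Phi_1^{(i)}$, which vanishes precisely under the hypothesis $b_i = \tilde b_i$; in either reading, $\mass^{n+1} = \mass^n$. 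For the linearized scheme \eqnref{imex-rk-imex-update} I would further note that $\Lb$ in \eqnref{linear-rhs}--\eqnref{linear-inviscid-hevi} is itself a finite-volume divergence of linearized fluxes built from the same face values, so it telescopes identically and its interface contribution is again annihilated by $w = 0$; and with $b_i = \tilde b_i$ one has $b_i\Nb_i + \tilde b_i\Lb_i = b_i\Rb_{1i}$, reducing to the previous case. For the loose-coupling Algorithm \algref{IMEXRK-Coupling-imex-sc}, the dense-output interpolation of $\Qb_1^{*}$ enters only as neighbor data inside the residuals and does not disturb the conservation-form structure of each domain's own discrete divergence, so the per-substep telescoping and the interface cancellation carry over verbatim.

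The step I expect to be the main obstacle is this interface bookkeeping: confirming that the isothermal-wall interface treatment produces one consistent mass flux on each interface face (so that the $\Omega_1$- and $\Omega_2$-sides are exactly opposite, or both zero), and that the exterior boundary conditions actually used in the experiments are mass-neutral. Once these are in hand, the remainder is routine telescoping together with the algebraic cancellation enabled by $b_i = \tilde b_i$.
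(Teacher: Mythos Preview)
Your proposal is correct and follows essentially the same route as the paper: sum the mass-equation residual over all cells in each subdomain, telescope interior faces, and observe that the remaining boundary contributions vanish by $x$-periodicity, the top/bottom wall condition, and the rigid-lid isothermal-wall treatment at $\Gamma$ (which enforces $\hat w_m=0$ and hence zero numerical mass flux there), yielding $\mass_m^{n+1}=\mass_m^n$ on each subdomain separately; for \eqnref{imex-rk-imex-update} the paper likewise invokes $b_i=\tilde b_i$ to collapse $b_i\Nb_i+\tilde b_i\Lb_i$ into $b_i\Rb_{1i}$.

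One small caveat on your ``more conservative'' alternative: in this discretization the interface is \emph{not} a shared face with a single flux but an independent isothermal wall for each subdomain (with wall states $u_{w_m},T_{w_m}$ built from the bulk formula), so the premise $\Phi_1^{(i)}=-\Phi_2^{(i)}$ is not supplied by the scheme; what holds is the stronger statement $\Phi_1^{(i)}=\Phi_2^{(i)}=0$, which is the argument the paper actually uses and which you give first.
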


\begin{proof}
  Without loss of generality, 
  we assume the $x$-periodic boundary condition and 
  isothermal wall condition at the top and bottom boundary.
  For simplicity, we focus on two-dimensional Cartesian coordinates. 
  We denote the right-hand side of the mass conservation equation in \eqnref{euler-fv-cartesian-2d} 
  for $\ell=(i,j)$ element on $\Omega_m$ by 
  $(\Rb_m)_{\rho,(i,j)} = (\Rb_m)^x_{\rho,(i,j)} + (\Rb_m)^z_{\rho,(i,j)}$, where
  \begin{align*}
    (\Rb_m)^x_{\rho,(i,j)} &= - \frac{1}{\dx_m}\LRp{(\rho u_m)^*_{(\iphalf,j)}  - (\rho u_m)^*_{(\imhalf,j)}},\\
    (\Rb_m)^z_{\rho,(i,j)} &= - \frac{1}{\dz_m}\LRp{(\rho w_m)^*_{(i,\jphalf)}  - (\rho w_m)^*_{(i,\jmhalf)}}.
  \end{align*}
  Summing $(\Rb_m)_{\rho,(i,j)}$ over all elements gives
  \begin{multline*}
  \sum_{i=1}^{\Nxe{m}}\sum_{j=1}^{\Nze{m}} 
      \LRp{(\Rb_m)^x_{\rho,(i,j)} + (\Rb_m)^z_{\rho,(i,j)}} \\
  = - \frac{1}{\dx_m} \sum_{j=1}^{\Nze{m}}  \LRp{ \rhouAn{j,\Nxe{m}+\half}{*,(k)} - \rhouAn{j,\half}{*,(k)} } \\
    - \frac{1}{\dz_m} \sum_{i=1}^{\Nxe{m}}  \LRp{ \rhowAn{\Nze{m}+\half,i}{*,(k)} - \rhowAn{\half,i}{*,(k)} } = 0  
  \end{multline*}
  due to $x$-periodicity and the wall boundary condition.


Thus, by taking the dot products of 
$\LRs{1}^T = (1, 1, \cdots, 1) \in \R^{\Ne{1}}$ 
 with \eqnref{imex-rk-update-a} and 
 $\LRs{1}^T = (1, 1, \cdots, 1) \in \R^{\Ne{2}}$ 
 with \eqnref{imex-rk-update-b},
  \begin{align*}
  \frac{1}{\dt} \LRp{ \mass_1(t^{n+1}) - \mass_1(t^n) }  
  &= \sum_{k=1}^{s} b_k \sum_{i=1}^{\Nxe{1}}\sum_{j=1}^{\Nze{1}} 
      \LRp{(\Rb_1)^x_{\rho,(i,j)} + (\Rb_1)^z_{\rho,(i,j)}} = 0,\\
  \frac{1}{\dt} \LRp{ \mass_2(t^{n+1}) - \mass_2(t^n) }  
  &= \sum_{k=1}^{s} \tilde{b}_k \sum_{i=1}^{\Nxe{2}}\sum_{j=1}^{\Nze{2}} 
      \LRp{(\Rb_2)^x_{\rho,(i,j)} + (\Rb_2)^z_{\rho,(i,j)}} = 0,
  \end{align*}
we have the mass conservation property on each subdomain; 
 hence, the total mass is conserved.

Similarly, by taking the dot products of 
$\LRs{1}^T = (1, 1, \cdots, 1) \in \R^{\Ne{1}}$ 
 with \eqnref{imex-rk-imex-update-a} and 
 $\LRs{1}^T = (1, 1, \cdots, 1) \in \R^{\Ne{2}}$ 
 with \eqnref{imex-rk-imex-update-b}, and 
 using the condition of $b_i=\tilde{b}_i$,  we have
  \begin{align*}
  \frac{1}{\dt} \LRp{ \mass_1(t^{n+1}) - \mass_1(t^n) }  
  &= \sum_{k=1}^{s} b_k \sum_{i=1}^{\Nxe{1}}\sum_{j=1}^{\Nze{1}} 
      \LRp{(\Rb_1)^x_{\rho,(i,j)} + (\Rb_1)^z_{\rho,(i,j)}} = 0,\\
  \frac{1}{\dt} \LRp{ \mass_2(t^{n+1}) - \mass_2(t^n) }  
  &= \sum_{k=1}^{s} b_k \sum_{i=1}^{\Nxe{2}}\sum_{j=1}^{\Nze{2}} 
      \LRp{(\Rb_2)^x_{\rho,(i,j)} + (\Rb_2)^z_{\rho,(i,j)}} = 0.
  \end{align*}
\end{proof}

Mass conservation is a critical component of the proposed strategy and will be checked numerically as well in the next section.
\footnote{
  Note that we investigate the mass conserving property for coupled CNS systems. 
  We conjecture that linear invariants will be preserved even with other couplings/ formulations 
  provided that the spatial discretization is conservative, 
  but a future investigation is needed. 
}
For IMEX numerical simultions,
 we limit ourselves to the 
 ARK2 method in \cite{giraldo2013implicit}, 
 and the ARK3 and the ARK4 in \cite{Kennedy2003additive}.

\section{Numerical results}
\seclab{NumericalResults}
 
We denote a single compressible Navier--Stokes model by CNS1 and 
a coupled compressible Navier--Stokes model by CNS2. 
We first perform spatial convergence studies for CNS1 using two examples:
density wave advection in Section \secref{sec-dwa} and Taylor--Green vortex in Section \secref{sec-tgv}.
Then, we conduct a temporal convergence study of IMEX coupling methods for CNS2 
using two moving vortices. 
We  compare the performance of tight and loose coupling methods  
through a wind-driven current and Kelvin--Helmholtz instability example.    

In the following examples, 
we measure the $L_2$ error of $q$ by 
$$ \left\Vert q  - q_r   \right\Vert := 
\LRp{\sum_{\ell=1}^{N_E} \snor{\K_{\ell}} (q_\ell-q_{\ell r})^2 }^\half $$,
where $q_r$ can be an exact solution or a reference solution.
We will take the fourth-order Runge--Kutta (RK4) methods with TC as a reference solution. 

We denote an IMEX coupling method by 
[ARK method]([linear operator type],[tight or loose coupling]).
For example, ARK2 ($\Lb^z$,TC) means an ARK2 tight coupling method whose implicit solver is HEVI, 
and ARK2 ($\Lb$,SC8) describes an ARK2 sequential coupling method with eight substeps in the explicit part 
whose implicit solver is IMEX.
  
\subsection{CNS1: Density wave advection}
\seclab{sec-dwa}
One-dimensional density wave advection is simulated \cite{ghosh2016semi} 
with zero viscosity. 
The initial density shape is advected with the constant velocity and pressure field.
The initial condition is given as 
\begin{align*}
\rho &= \rho_\infty + \half \sin(2\pi x) \cos(2\pi z),\\
u &= w = u_\infty,\\
\pres &= \pres_\infty
\end{align*}
with $(\rho_\infty,u_\infty,w_\infty,\pres_\infty)=(1,1,1,1)$.
The computational domain is taken as $\Omega = [0,1]^2$, 
and periodic boundary conditions are applied at all the boundaries. 

For spatial convergence studies,
we take uniform nested meshes with $h=\dx=\dz=1/\LRc{20,40,80,160,320,640,1280}$,
 and the RK4 time integration with $\dt=6.25\times 10^{-5}$. 
Since this example has an exact solution,
\begin{align*}
  \rho_e &= \rho_\infty + \half \sin(2\pi \tilde{x}) \cos(2\pi \tilde{z}),
\end{align*}
with $\tilde{x} = x - u_\infty t$ and $\tilde{z} = z - w_\infty t$,
we compute the $L_2$ errors at $t=0.1$
 and report them in Table \tabref{dwa-fixedDt-exact}.
 Here, $q_e$ can be $\rho_e$, $\rho \ub_e$, and $\rho E_e$. 
We observe the second-order convergence rate as expected.

 \begin{table}[t] 
  \caption{Spatial convergence for Denstiy wave advection: 
  we take uniform nested meshes with $h=\dx=\dz=1/\LRc{20,40,80,160,320,640,1280}$, 
 and the RK4 time integration with $\dt=6.25\times 10^{-5}$. 
  The error is measured by using an exact solution at $t=0.1$.} 
  \tablab{dwa-fixedDt-exact} 
  \begin{center} 
    \begin{tabular}{*{1}{c}|*{2}{c}|*{2}{c}|*{2}{c}} 
    \hline 
    \multirow{2}{*}{$h$}
    & \multicolumn{2}{c}{$ \left\Vert \rho   - \rho_e   \right\Vert  $} 
    & \multicolumn{2}{c}{$ \left\Vert \rho {\bf u} - \rho {\bf u}_e \right\Vert  $} 
    & \multicolumn{2}{c}{$ \left\Vert \rho E - \rho E_e \right\Vert  $} \tabularnewline 
    & error & order &error & order &error & order \tabularnewline 
    \hline\hline 
  1/   20&       3.142E-03 & $-$&       4.444E-03 & $-$&       3.142E-03 & $-$\tabularnewline
  1/   40&       5.609E-04 &    2.486&       7.933E-04 &    2.486&       5.609E-04 &    2.486\tabularnewline
  1/   80&       1.213E-04 &    2.209&       1.715E-04 &    2.209&       1.213E-04 &    2.209\tabularnewline
  1/  160&       2.900E-05 &    2.064&       4.102E-05 &    2.064&       2.900E-05 &    2.064\tabularnewline
  1/  320&       7.166E-06 &    2.017&       1.013E-05 &    2.017&       7.166E-06 &    2.017\tabularnewline
  1/  640&       1.786E-06 &    2.004&       2.526E-06 &    2.004&       1.786E-06 &    2.004\tabularnewline
  1/ 1280&       4.462E-07 &    2.001&       6.310E-07 &    2.001&       4.462E-07 &    2.001\tabularnewline
    \hline\hline 
    \end{tabular} 
  \end{center}     
\end{table}

\subsection{CNS1: Taylor--Green vortex}
\seclab{sec-tgv}

The Taylor--Green vortex flow \cite{taylor1937mechanism} is simulated by
using compressible Navier--Stokes equations at $Mach = 0.1$.
We solve the flows on a uniform grid of $\Omega = [0,1]^2$ with periodic boundary conditions.
The initial condition is
\begin{align*}
\rho &= \rho_\infty,\\
u &=   u_\infty \cos (2\pi x) \sin (2\pi z),\\
w &= - u_\infty \sin (2\pi x) \cos (2\pi z),\\
\pres &= \pres_\infty + \frac{\rho_\infty u_\infty^2}{4} \LRp{ \cos(4\pi x) + \cos (4\pi z) }
\end{align*}
with $(\rho_\infty,u_\infty,\pres_\infty)=(1,0.1,\gamma^{-1})$.
We take $\gamma=1.4$, $Pr=0.72$, and $Re=100$. 
  
Since this example does not have an exact solution,
we take the RK4 solution with $\dt = 1\times 10^{-6}$ and $h=1/640$ as a ground truth 
and measure the $L_2$ errors.
Table \tabref{tgv-fixedDt-reference} shows the second-order convergence rates for the conservative variables. 
 
\begin{table}[t] 
  \caption{Spatial convergence for TGV: 
  we take uniform nested meshes with $h=\dx=\dz=1/\LRc{20,40,80,160,320}$,
  and the RK4 time integration with $\dt=1\times 10^{-6}$. 
   The error is measured by using a reference solution   
  with $\dt = 1\times 10^{-6}$ and $h=1/640$ at $t=10^{-3}$.}
  \tablab{tgv-fixedDt-reference} 
  \begin{center} 
  \begin{tabular}{*{1}{c}|*{2}{c}|*{2}{c}|*{2}{c}} 
  \hline 
  \multirow{2}{*}{$h$}
  & \multicolumn{2}{c}{$ \left\Vert \rho   - \rho_r   \right\Vert  $} 
  & \multicolumn{2}{c}{$ \left\Vert \rho {\bf u} - \rho {\bf u}_r \right\Vert  $} 
  & \multicolumn{2}{c}{$ \left\Vert \rho E - \rho E_r \right\Vert  $} \tabularnewline 
  & error & order &error & order &error & order \tabularnewline 
  \hline\hline 
  1/   20&       2.261E-08 & $-$&       5.754E-04 & $-$&       1.077E-04 & $-$\tabularnewline
  1/   40&       6.558E-09 &    1.786&       1.442E-04 &    1.996&       2.728E-05 &    1.981\tabularnewline
  1/   80&       1.679E-09 &    1.965&       3.570E-05 &    2.014&       6.789E-06 &    2.007\tabularnewline
  1/  160&       4.034E-10 &    2.057&       8.509E-06 &    2.069&       1.622E-06 &    2.066\tabularnewline
  1/  320&       8.088E-11 &    2.319&       1.702E-06 &    2.321&       3.248E-07 &    2.320\tabularnewline
  \hline\hline 
  \end{tabular} 
  \end{center} 
\end{table}

\subsection{CNS2: Two moving vortices}
\seclab{cns2-tmv}
Ocean and atmosphere have different thermodynamic properties because one is a gas and other is a liquid.
\footnote{
For example, in the standard atmosphere, 
the standard pressure and temperature at sea surface level are given as 
$\pres=1013.25 \si{hPa}$ and $T=15 \si{\celsius}$.
With the universal gas constant for dry air, $R=287 \si{J.Kg^{-1}.K^{-1}}$,
we obtain the density $\rho = 1.226 \si{\kg.\meter^{-3}}$ from the equation of state for the ideal gas law. 
For the ocean surface, however, 
pressure, temperature, and density are given
$\pres=1013.25 \si{hPa}$, $T=24 \si{\celsius}$ and $\rho=1024 \si{kg.m^{-3}}$  
for salinity $S=35$ according to the example in \cite{young2010dynamic}.
}
For coupling under the rigid-lid assumption in \eqnref{rigid-lid-cond},
the jumps of velocity and temperature are of interest for specifying the bulk form \eqnref{bulk-flux}. 
To make the coupling problem simple, 
we consider two fluids as ideal gases with the same density 
but jumps of temperature and velocity across the interface. 
We assume that the temperature variation across the interface is within $10 \si{\percent}$ of the atmospheric temperature.
We also allow for a $10 \si{\percent}$ jump of the pressure at the material interface 
since an ocean model can have a different pressure from that of an atmospheric model at the interface.

We add two moving vortices  
to the uniform mean flow, apply periodic boundary conditions to $x$-direction, 
and impose isothermal boundary conditions at the top and the bottom walls. 
The top and bottom walls horizontally move with $u_{1_w}=0.1$ and $u_{2_w}=0.05$, respectively.
The whole domain is $\Omega =(-5,5) \times (-5,5)$
comprising two subdomains:   
$\Omega_1 =(-5,5) \times (-5,0)$ and 
$\Omega_2 =(-5,5) \times (0,5)$.  
The superposed flow is given as 
\begin{align*}
  \rho_m & = \LRp{ 1 - \frac{(\gamma-1)\beta^2_m}{8 \alpha\gamma \pi^2} e^{\alpha\LRp{1-r^2}} }^{\frac{1}{\gamma-1}},\\
  u_m & = u_{\infty_m} + \frac{\beta_m}{2\pi}\tilde{z}_m e^{\frac{\alpha}{2}\LRp{1-r^2} }, \\
  w_m & = - \frac{\beta_m}{2\pi}\tilde{x}_m e^{\frac{\alpha}{2}\LRp{1-r^2} }, \\
  \pres_m & = T_{\infty_m} \gamma^{-1} \rho_m^\gamma
\end{align*} 
with $\tilde{x}_m=x_m - x_{c_m}$ and $\tilde{z}_m=z_m - z_{c_m}$ 
for $\Omega_m$. 
We take 
$u_{\infty_1} = 0.05$,
$u_{\infty_2} = 0.1$,
$T_{\infty_1} = 1.1$,
$T_{\infty_2} = 1.0$,
$\alpha=2$,
$\beta_1=0.1$,
$\beta_2=0.5$,
$(x_{c_1},z_{c_1})=(0,-2.5)$, and 
$(x_{c_2},z_{c_2})=(0,2.5)$.
We choose the fluid parameters of $\gamma=1.4$, $Pr=0.72$, 
$\tilde{c}_p=(\gamma-1)^{-1}$,
and $\tilde{\mu}_m=5000^{-1}$.
\begin{figure}[h!t!b!]
  \centering
  \includegraphics[trim=2.6cm 1.5cm 5.5cm 2.2cm,clip=true,height=0.26\textwidth]{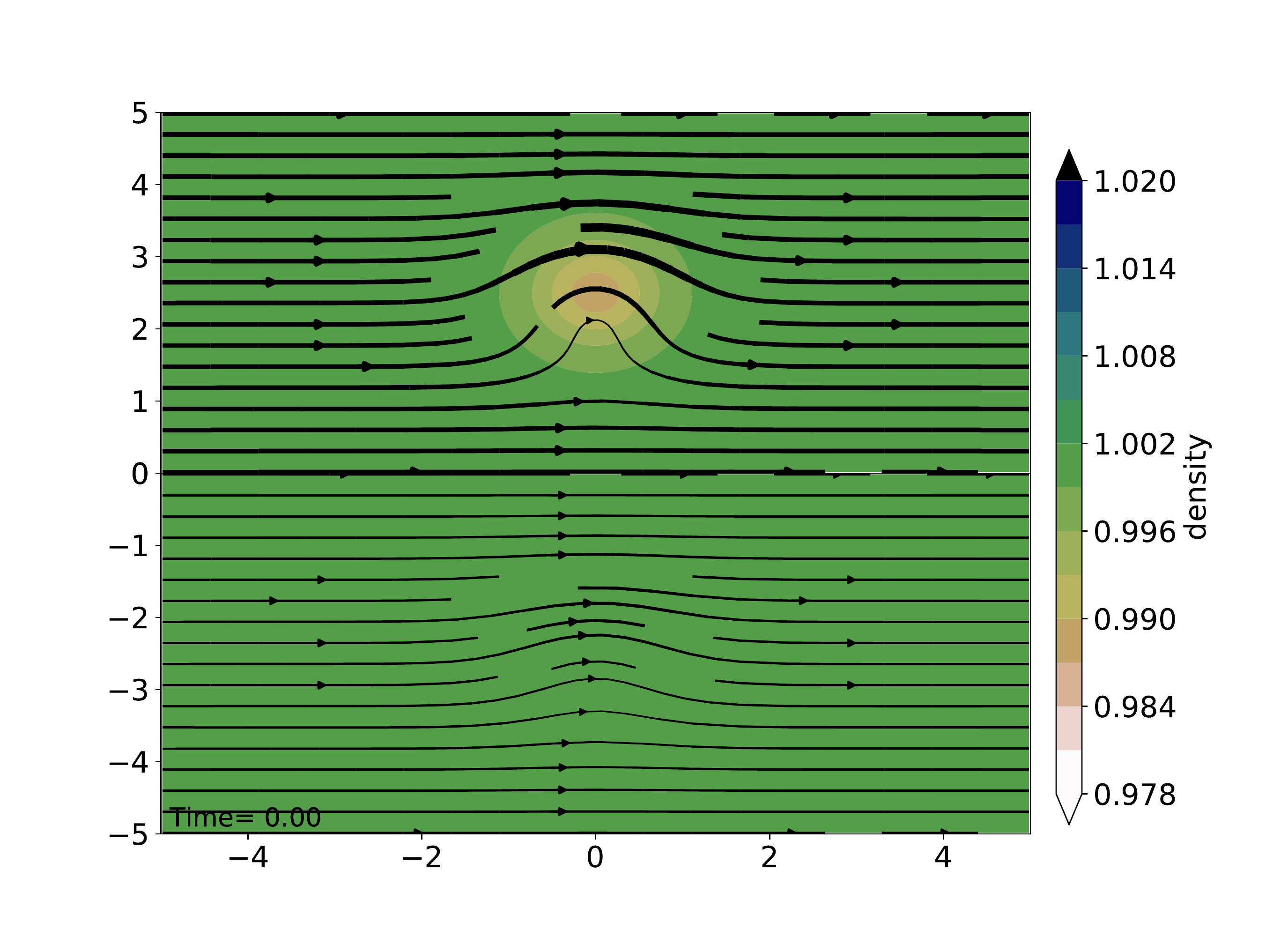}
  \includegraphics[trim=3.3cm 1.5cm 5.5cm 2.2cm,clip=true,height=0.26\textwidth]{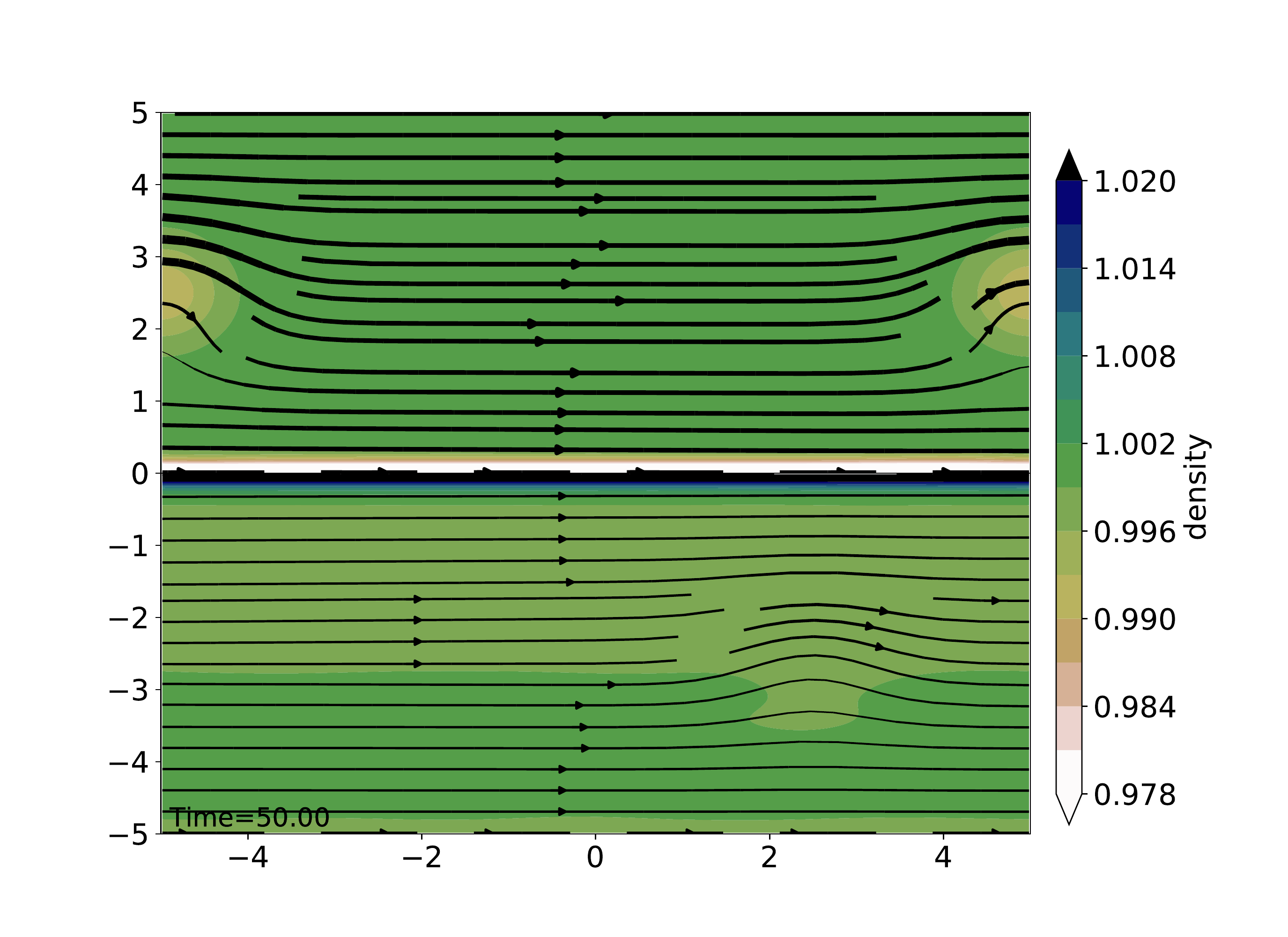}
  \includegraphics[trim=3.3cm 1.5cm 2.8cm 2.2cm,clip=true,height=0.26\textwidth]{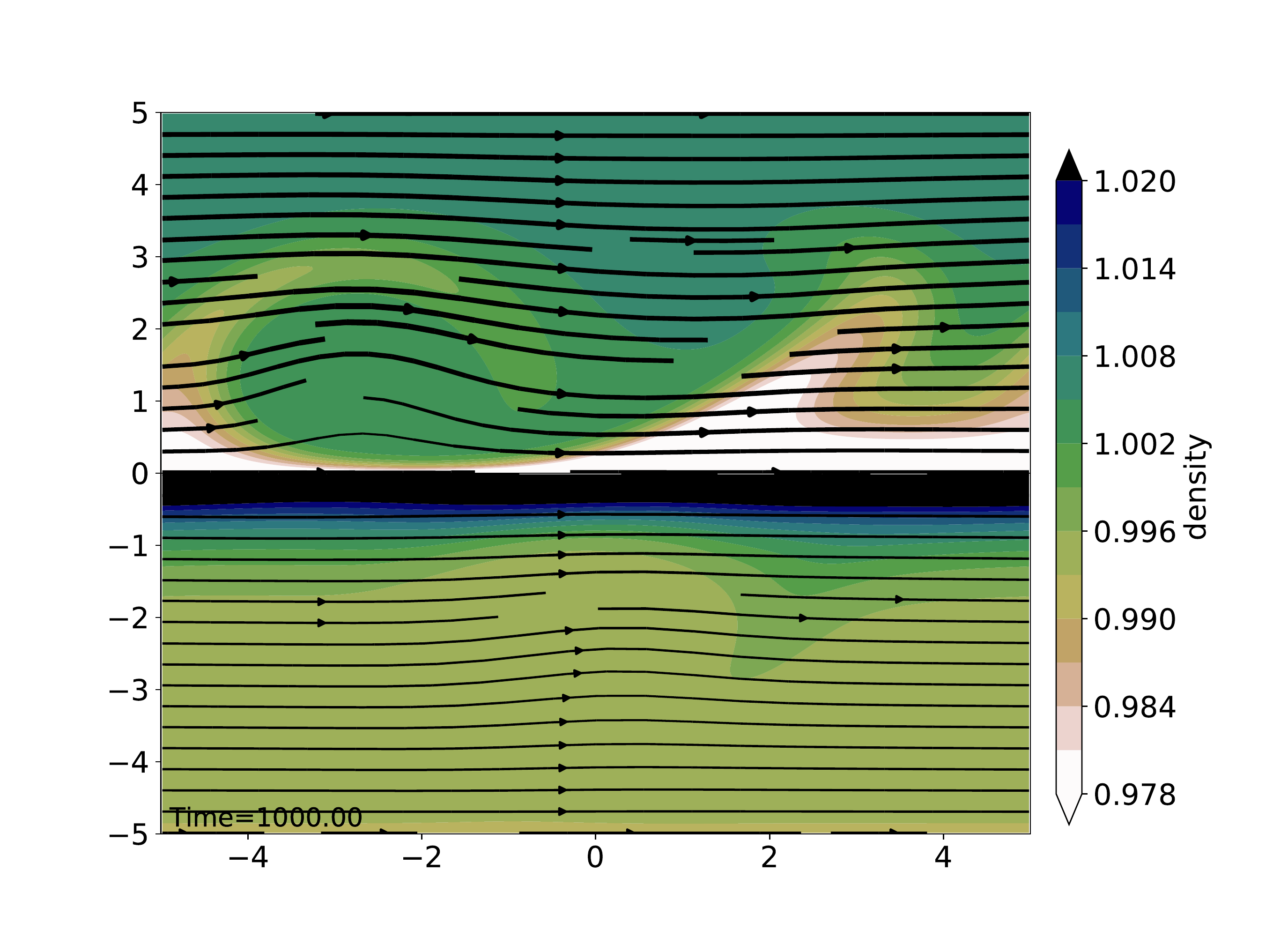}

  \caption{Snapshots of numerical solutions for two moving vortices at $t=\LRc{0,50,1000}$: 
  two vortices propagate to the positive $x$-direction with a mean velocity of $0.05$ on $\Omega_1$ 
  and $0.1$ on $\Omega_2$, respectively.}
  \figlab{ccns-tmv-ss}
\end{figure}

We integrate the coupled model by using RK4 (TC)
with $\dt=0.025$ for $t\in \LRs{0,1000}$
over an $\Nxe{_m}(320)\times\Nye{_m}(160)$ mesh.
Figure \figref{ccns-tmv-ss} shows the snapshots of the density field at $t=\LRc{0,50,1000}$
 with streamlines. 
 Initially, two vortices are located at the center of each domain (left panel). 
 Then they propagate to the positive $x$-direction with a different speed,
 so that the vortex on $\Omega_2$ moves two times faster than that on $\Omega_1$ (center panel).
 The velocity difference at the interface makes  
 for the horizontal momentum to be transferred from the top (atmosphere) to the bottom (ocean) models.
 At the same time, since the bottom (ocean) is $10 \si{\percent}$ hotter than the top (atmosphere), 
 heat transfer occurs from the bottom (ocean) to the top (atmosphere).
 Given the heat and horizontal momentum fluxes at the interface, 
 we estimate the wall temperature and velocity for atmosphere and ocean models at the interface, respectively. 
 The interface at the top (atmosphere) is warmed, but the bottom (ocean) counterpart is cooled. 
 This situation leads to the sharp gradient of density at the interface.
 As time passes, the vortices get diffused because of viscosity 
 and roll the fluids near the interface (right panel).

\subsubsection{IMEX tight coupling methods}
We now perform a temporal convergence study for IMEX coupling methods 
to demonstrate their stability (with $Cr > 1$\footnote{
  We define the Courant number $Cr:=a+\norm{\ub} = a + \sqrt{u^2+w^2}$.
}) and accuracy.
Since both models have similar acoustic wave speed,\footnote{
  The difference in the acoustic waves is about $0.05$, 
  e.g., $a_{\infty_2} - a_{\infty_1} = \sqrt{1.1} - \sqrt{1}$. 
} it is difficult to relax the scale-separable stiffness with IMEX tight coupling methods. 
Instead, we add $z$-directional geometric stiffness to $\Omega_1$.
The computational domain is discretized with $80 \times 800$ on $\Omega_1$ and $80 \times 80$ on $\Omega_2$. 
We treat $\Omega_1$ implicitly using the HEVI approach,\footnote{
Tighter tolerance is required to avoid solver errors that affect the temporal asymptotic analysis.
  We take \texttt{1e-4} 
  tolerance for the Krylov subspace methods. 
}, whereas  we treat $\Omega_2$ explicitly so that  
the timestep size is restricted by the acoustic wave speed in $\Omega_2$.

\begin{figure}[h!t!b!]
  \centering
    \includegraphics[trim=0.0cm 0.0cm 0.0cm 0.0cm,
      clip=true,width=0.8\textwidth]{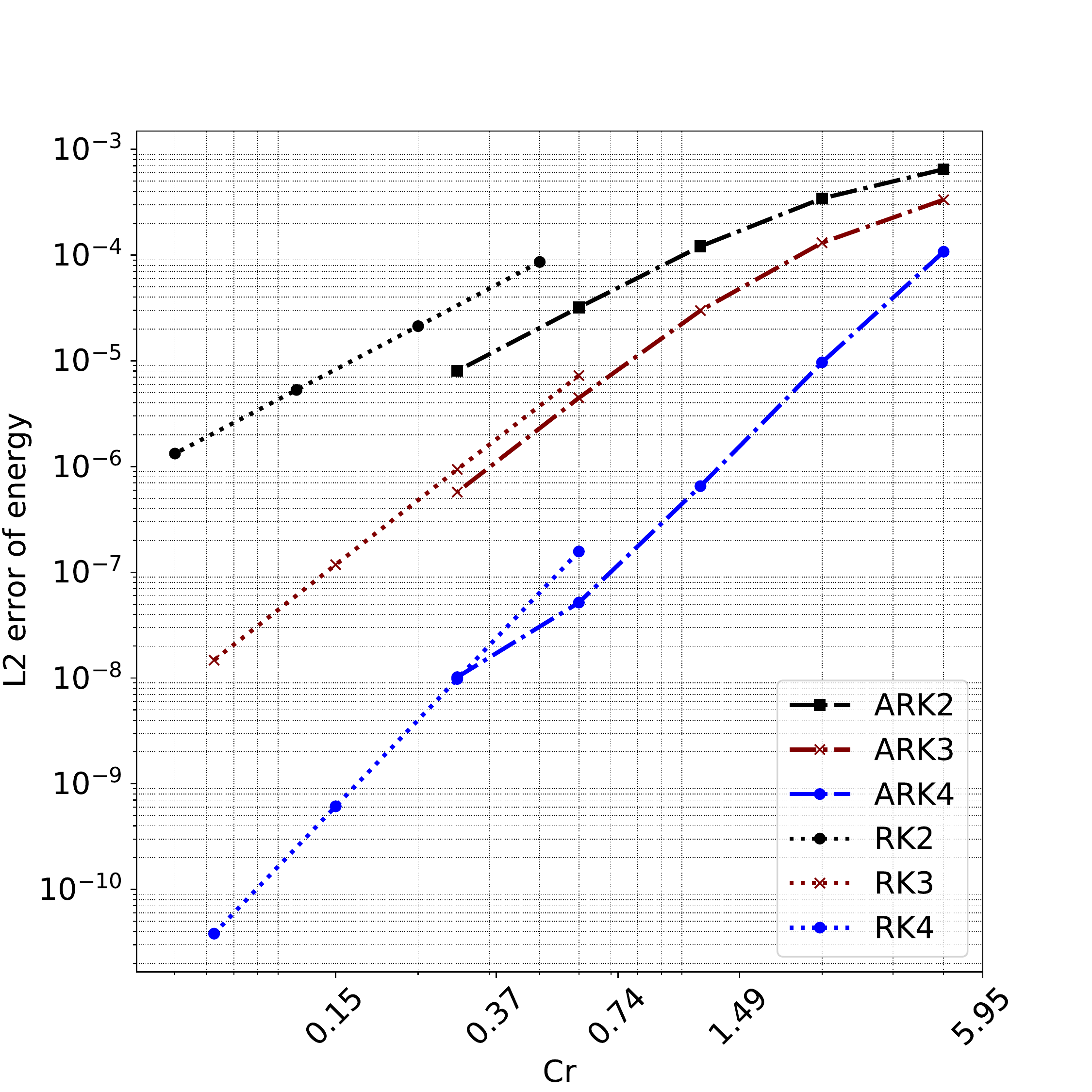}
 
  \caption{Two moving vortices: temporal convergence study for IMEX tight coupling methods.
  We take $\texttt{1e-4}$  tolerance for the Krylov subspace solver.}
  \figlab{tmv-tconv-imex-tc}
\end{figure}

We perform numerical simulations for IMEX tight coupling methods and 
measure the $L_2$ error at $t=2$ with the RK4 solution of $\dt=5\times 10^{-4}$.
For comparison, we also conduct numerical simulations for RK tight coupling methods. 
The results are summarized in Table \tabref{tmv-tempJump-tconv-reference} 
and Figure \figref{tmv-tconv-imex-tc}. 
In general, the second-, third-, and fourth-order convergence rates are observed for both RK and ARK coupling methods in asymptotic regimes as expected.
However, we observe that the rate of convergence of ARK4 decreases near $\mc{O}(10^{-8})$ error level. 
This decrease might be because we construct the linear operator $\Lb^z$ in \eqnref{linear-inviscid} 
based on both analytical Jacobian and finite difference (FD) approximation  
\footnote{
  For example, we use FD approximation for computing the linearized Roe flux,
   where the perturbation of absolute values of eigenvalues is approximated 
   $ \delta \snor{\Lambda} \approx \frac{\snor{\Lambda (\qb+\epsilon\delta \qb)} - \snor{\Lambda (\qb)} }{\epsilon}$ 
   with $\epsilon = 10^{-8}$. 
} in our implementation.

 
\begin{table}[t] 
  \caption{Temporal convergence study  conducted for RK and IMEX tight coupling methods.
  The domain is discretized with $80 \times 800$ on $\Omega_1$ and $80 \times 80$ on $\Omega_2$. 
  We use the RK4 solution with $\dt=5\times 10^{-4}$ as the reference solution, and we 
  measure 
  the $L_2$ error at $t=2$.
  In general, the expected second-order,  third-order, and  fourth-order convergence rates are observed for both the RK and ARK ($\Lb^z$,TC) methods. 
  }
  \tablab{tmv-tempJump-tconv-reference} 
  \begin{center} 
  \begin{tabular}{*{1}{c}|*{1}{c}|*{2}{c}|*{2}{c}|*{2}{c}} 
  \hline 
  \multirow{2}{*}{ }
  & \multirow{2}{*}{$ \triangle t (Cr_1,Cr_2)$}
  & \multicolumn{2}{c}{$ \left\Vert \rho   - \rho_r   \right\Vert  $} 
  & \multicolumn{2}{c}{$ \left\Vert \rho {\bf u} - \rho {\bf u}_r \right\Vert  $} 
  & \multicolumn{2}{c}{$ \left\Vert \rho E - \rho E_r \right\Vert  $} \tabularnewline 
  & & error & order &error & order &error & order \tabularnewline 
  \hline\hline 
  \multirow{5}{*}{RK2}
  & 0.004 (0.72,0.08) &       3.128E-05 & $-$&       3.285E-05 & $-$&       8.606E-05 & $-$\tabularnewline
  & 0.002 (0.36,0.04) &       7.730E-06 &    2.017&       8.117E-06 &    2.017&       2.126E-05 &    2.017\tabularnewline
  & 0.001 (0.18,0.02) &       1.929E-06 &    2.003&       2.026E-06 &    2.003&       5.306E-06 &    2.003\tabularnewline
  & 0.0005(0.09,0.01) &       4.821E-07 &    2.000&       5.063E-07 &    2.000&       1.326E-06 &    2.000\tabularnewline
  \multicolumn{8}{c}{} \tabularnewline
  \multirow{5}{*}{RK3}
  & 0.005    (0.89,0.10) &       2.633E-06 & $-$&       2.765E-06 & $-$&       7.244E-06 & $-$\tabularnewline
  & 0.0025   (0.45,0.05) &       3.413E-07 &    2.948&       3.585E-07 &    2.948&       9.390E-07 &    2.948\tabularnewline
  & 0.00125  (0.22,0.02) &       4.285E-08 &    2.994&       4.500E-08 &    2.994&       1.179E-07 &    2.994\tabularnewline
  & 0.000625 (0.11,0.01) &       5.358E-09 &    3.000&       5.627E-09 &    3.000&       1.474E-08 &    3.000\tabularnewline
  \multicolumn{8}{c}{} \tabularnewline
  \multirow{5}{*}{RK4}
  & 0.005    (0.89,0.10) &       5.709E-08 & $-$&       5.996E-08 & $-$&       1.571E-07 & $-$\tabularnewline
  & 0.0025   (0.45,0.05) &       3.558E-09 &    4.004&       3.737E-09 &    4.004&       9.789E-09 &    4.004\tabularnewline
  & 0.00125  (0.22,0.02) &       2.223E-10 &    4.000&       2.333E-10 &    4.002&       6.111E-10 &    4.002\tabularnewline
  & 0.000625 (0.11,0.01) &       1.420E-11 &    3.969&       1.461E-11 &    3.997&       3.817E-11 &    4.001\tabularnewline
  \multicolumn{8}{c}{} \tabularnewline
  \multirow{5}{*}{ARK2 ($\Lb^z$,TC)}  
  &0.04(7.15,0.76)   &       2.352E-04 &      $-$&       2.447E-04 &      $-$&       6.483E-04 &      $-$\tabularnewline
  &0.02(3.57,0.38)   &       1.248E-04 &    0.914&       1.310E-04 &    0.901&       3.433E-04 &    0.917\tabularnewline
  &0.01(1.79,0.19)   &       4.395E-05 &    1.506&       4.616E-05 &    1.505&       1.209E-04 &    1.506\tabularnewline
  &0.005(0.89,0.10)  &       1.163E-05 &    1.919&       1.221E-05 &    1.919&       3.198E-05 &    1.919\tabularnewline
  &0.0025(0.45,0.05) &       2.919E-06 &    1.994&       3.065E-06 &    1.994&       8.030E-06 &    1.994\tabularnewline
  \multicolumn{8}{c}{} \tabularnewline
  \multirow{5}{*}{ARK3 ($\Lb^z$,TC)}  
  &0.04(7.15,0.76)   &       1.214E-04 &      $-$&       1.275E-04 &      $-$&       3.341E-04 &      $-$\tabularnewline
  &0.02(3.57,0.38)   &       4.758E-05 &    1.352&       4.997E-05 &    1.352&       1.309E-04 &    1.352\tabularnewline
  &0.01(1.79,0.19)   &       1.086E-05 &    2.132&       1.140E-05 &    2.132&       2.987E-05 &    2.132\tabularnewline
  &0.005(0.89,0.10)  &       1.621E-06 &    2.744&       1.702E-06 &    2.744&       4.459E-06 &    2.744\tabularnewline
  &0.0025(0.45,0.05) &       2.082E-07 &    2.960&       2.187E-07 &    2.960&       5.729E-07 &    2.960\tabularnewline
  \multicolumn{8}{c}{} \tabularnewline
  \multirow{5}{*}{ARK4 ($\Lb^z$,TC)} 
  &0.04(7.15,0.76)   &       3.911E-05 &      $-$&       4.108E-05 &      $-$&       1.076E-04 &      $-$\tabularnewline
  &0.02(3.57,0.38)   &       3.515E-06 &    3.476&       3.692E-06 &    3.476&       9.670E-06 &    3.476\tabularnewline
  &0.01(1.79,0.19)   &       2.371E-07 &    3.890&       2.507E-07 &    3.881&       6.521E-07 &    3.890\tabularnewline
  &0.005(0.89,0.10)  &       1.888E-08 &    3.650&       2.162E-08 &    3.535&       5.166E-08 &    3.658\tabularnewline
  &0.0025(0.45,0.05) &       3.782E-09 &    2.320&       5.073E-09 &    2.092&       1.018E-08 &    2.343\tabularnewline
  \hline\hline 
  \end{tabular} 
  \end{center} 
\end{table}

\subsubsection{IMEX loose coupling (concurrent and sequential coupling) methods }

The IMEX tight coupling approach can achieve high-order convergence in time, 
but it requires communication at each stage. 
Moreover, both the models advance with the same timestep size. 
We may relax the tight coupling condition 
by using concurrent and sequential couplings, as shown in Figure \figref{coupling-diagram}(b) and Figure \figref{coupling-diagram}(c).
In IMEX loose coupling methods, 
we solve the bottom (ocean) model implicitly using ARK time integrators 
but treat the top (atmosphere) model explicitly using the explicit part of ARK methods.
The computational domain is discretized with $80 \times 400$ elements on $\Omega_1$ and $80 \times 160$ elements on $\Omega_2$. 

We measure the $L_2$ error at $t=2$ with the reference RK4 solution of $\dt=5\times 10^{-4}$ 
and report the results in Table \tabref{tmv-tconv-imex-lc}.
Since the top (atmosphere) model has two substeps, 
the Courant number at each substep should be understood as $\half Cr_2$ in Table \tabref{tmv-tconv-imex-lc}. 
Unlike IMEX tight coupling methods, 
both the concurrent (CC2) and the sequential (SC2) coupling methods show 
 first-order convergence rates for density, momentum, and total energy. 
The difference between two error levels of CC2 and SC2 is within $\mathcal{O}(10^{-5})$.

\begin{table}[t] 
  \caption{Temporal convergence study  conducted for IMEX loose coupling methods.
  The domain is discretized with $80 \times 400$ on $\Omega_1$ and $80 \times 160$ on $\Omega_2$. 
  We use the RK4 solution with $\dt=5\times 10^{-4}$ as the reference solution, and we
  measure the $L_2$ error at $t=2$.
  }
  \tablab{tmv-tconv-imex-lc} 
  \begin{center} 
  \begin{tabular}{*{1}{c}|*{1}{c}|*{2}{c}|*{2}{c}|*{2}{c}} 
  \hline 
  \multirow{2}{*}{ }
  & \multirow{2}{*}{$ \triangle t (Cr_1,Cr_2)$}
  & \multicolumn{2}{c}{$ \left\Vert \rho   - \rho_r   \right\Vert  $} 
  & \multicolumn{2}{c}{$ \left\Vert \rho {\bf u} - \rho {\bf u}_r \right\Vert  $} 
  & \multicolumn{2}{c}{$ \left\Vert \rho E - \rho E_r \right\Vert  $} \tabularnewline 
  & & error & order &error & order &error & order \tabularnewline 
  \hline\hline 
  \multirow{5}{*}{\shortstack{ARK2 ($\Lb^z$,CC2)}}
  &0.05    (4.47,1.91)&       3.849E-04 & $-$&       3.969E-04 & $-$&       1.033E-03 & $-$\tabularnewline
  &0.025   (2.23,0.95)&       1.781E-04 &    1.112&       1.832E-04 &    1.115&       4.764E-04 &    1.117\tabularnewline
  &0.0125  (1.12,0.48)&       5.539E-05 &    1.685&       5.513E-05 &    1.733&       1.413E-04 &    1.753\tabularnewline
  &0.00625 (0.56,0.24)&       1.696E-05 &    1.707&       1.522E-05 &    1.857&       3.702E-05 &    1.933\tabularnewline
  &0.003125(0.28,0.12)&       6.371E-06 &    1.413&       4.839E-06 &    1.653&       1.030E-05 &    1.846\tabularnewline
  &0.001563(0.14,0.06)&       2.865E-06 &    1.153&       1.929E-06 &    1.327&       3.446E-06 &    1.579\tabularnewline
  &0.000781(0.07,0.03)&       1.390E-06 &    1.043&       8.944E-07 &    1.109&       1.441E-06 &    1.258\tabularnewline
  \multicolumn{8}{c}{} \tabularnewline
  \multirow{5}{*}{\shortstack{ARK2 ($\Lb^z$,SC2)}}  
  &0.05    (4.47,1.91)&       3.835E-04 & $-$&       3.954E-04 & $-$&       1.032E-03 & $-$\tabularnewline
  &0.025   (2.23,0.95)&       1.773E-04 &    1.113&       1.823E-04 &    1.117&       4.754E-04 &    1.118\tabularnewline
  &0.0125  (1.12,0.48)&       5.468E-05 &    1.697&       5.430E-05 &    1.747&       1.403E-04 &    1.760\tabularnewline
  &0.00625 (0.56,0.24)&       1.636E-05 &    1.741&       1.443E-05 &    1.911&       3.597E-05 &    1.964\tabularnewline
  &0.003125(0.28,0.12)&       5.962E-06 &    1.456&       4.187E-06 &    1.785&       9.289E-06 &    1.953\tabularnewline
  &0.001563(0.14,0.06)&       2.635E-06 &    1.178&       1.498E-06 &    1.483&       2.624E-06 &    1.824\tabularnewline
  &0.000781(0.07,0.03)&       1.272E-06 &    1.051&       6.556E-07 &    1.192&       9.070E-07 &    1.532\tabularnewline
  \multicolumn{8}{c}{} \tabularnewline
  \multirow{5}{*}{\shortstack{ARK3 ($\Lb^z$,CC2)}}
  &0.05    (4.47,1.91)&       2.074E-04 & $-$&       2.040E-04 & $-$&       5.208E-04 & $-$\tabularnewline
  &0.025   (2.23,0.95)&       7.514E-05 &    1.464&       6.949E-05 &    1.554&       1.720E-04 &    1.598\tabularnewline
  &0.0125  (1.12,0.48)&       2.467E-05 &    1.607&       1.807E-05 &    1.943&       3.688E-05 &    2.222\tabularnewline
  &0.00625 (0.56,0.24)&       1.112E-05 &    1.150&       7.116E-06 &    1.344&       1.133E-05 &    1.703\tabularnewline
  &0.003125(0.28,0.12)&       5.513E-06 &    1.012&       3.490E-06 &    1.028&       5.375E-06 &    1.075\tabularnewline
  \multicolumn{8}{c}{} \tabularnewline
  \multirow{5}{*}{\shortstack{ARK3 ($\Lb^z$,SC2)}}  
  &0.05    (4.47,1.91)&       2.068E-04 & $-$&       2.008E-04 & $-$&       5.161E-04 & $-$\tabularnewline
  &0.025   (2.23,0.95)&       7.494E-05 &    1.465&       6.708E-05 &    1.582&       1.682E-04 &    1.617\tabularnewline
  &0.0125  (1.12,0.48)&       2.458E-05 &    1.608&       1.565E-05 &    2.099&       3.219E-05 &    2.386\tabularnewline
  &0.00625 (0.56,0.24)&       1.109E-05 &    1.148&       5.521E-06 &    1.504&       6.927E-06 &    2.216\tabularnewline
  &0.003125(0.28,0.12)&       5.506E-06 &    1.010&       2.677E-06 &    1.044&       2.986E-06 &    1.214\tabularnewline
  \multicolumn{8}{c}{} \tabularnewline
  \multirow{5}{*}{\shortstack{ARK4 ($\Lb^z$,CC2)}}
  &0.05    (4.47,1.91)&       9.853E-05 & $-$&       7.092E-05 & $-$&       1.417E-04 & $-$\tabularnewline
  &0.025   (2.23,0.95)&       4.462E-05 &    1.143&       2.839E-05 &    1.321&       4.447E-05 &    1.671\tabularnewline
  &0.0125  (1.12,0.48)&       2.215E-05 &    1.011&       1.403E-05 &    1.016&       2.170E-05 &    1.035\tabularnewline
  &0.00625 (0.56,0.24)&       1.105E-05 &    1.004&       6.997E-06 &    1.004&       1.080E-05 &    1.006\tabularnewline
  &0.003125(0.28,0.12)&       5.515E-06 &    1.002&       3.493E-06 &    1.002&       5.390E-06 &    1.003\tabularnewline
  \multicolumn{8}{c}{} \tabularnewline
  \multirow{5}{*}{\shortstack{ARK4 ($\Lb^z$,SC2)}} 
  &0.05    (4.47,1.91)&       9.884E-05 & $-$&       6.078E-05 & $-$&       1.207E-04 & $-$\tabularnewline
  &0.025   (2.23,0.95)&       4.505E-05 &    1.134&       2.204E-05 &    1.463&       2.565E-05 &    2.234\tabularnewline
  &0.0125  (1.12,0.48)&       2.243E-05 &    1.006&       1.089E-05 &    1.016&       1.217E-05 &    1.076\tabularnewline
  &0.00625 (0.56,0.24)&       1.120E-05 &    1.002&       5.441E-06 &    1.002&       6.069E-06 &    1.004\tabularnewline
  &0.003125(0.28,0.12)&       5.598E-06 &    1.001&       2.719E-06 &    1.001&       3.030E-06 &    1.002\tabularnewline
  \hline\hline 
  \end{tabular} 
  \end{center} 
\end{table}

\subsection{CNS2: Wind-driven flow}

The ocean current is driven by the wind at the sea surface. 
Under the assumption of the rigid-lid interface, 
we mimic the wind-driven flows for a coupled compressible Navier--Stokes equation. 
The computational domain is the same as the one in Section \secref{cns2-tmv}. 
We take $T_{\infty_1} = 1.1$, $T_{\infty_2}=1$,
and $\rho_{\infty_1}=\rho_{\infty_2}=1$
with $Pr=0.72$ and $\gamma=1.4$. 
We impose a horizontally moving isothermal wall condition at the top boundary, for example, 
$T_{w_2}=0.9$ and $u_{w_2}=0.1$, and an isothermal no-slip condition at the bottom wall, for example,
$T_{w_1}=1$ and $u_{w_1}=0$. 
We apply periodic boundary conditions laterally on $\Omega_2$ 
and adiabatic no-slip wall boundary conditions at the left and the right walls on $\Omega_1$.
We set uniform horizontal flows $u_{\infty_2}=0.1$ on $\Omega_2$ 
and zero velocity on $\Omega_1$. 
Similar to the lid-driven cavity flow \cite{bruneau20062d},
the viscous drag force exerted by the velocity difference at the interface induces circular motion on $\Omega_1$.
 
We perform simulations for 
$\tilde{\mu}_m = \LRc{10^{-1},500^{-1},1000^{-1},5000^{-1}}$
  on the grid with $80\times80$ elements on $\Omega_1$ 
  and $80 \times 80$ elements on $\Omega_2$ using RK4 methods.
In Figure \figref{wdf-re-ss}, 
temperature fields for 
$\tilde{\mu}_m = \LRc{10^{-1},500^{-1},1000^{-1},5000^{-1}}$
are contoured 
with streamlines at $t=500$. 
Since the heat and momentum fluxes are 
proportional to $\tilde{\mu}_m$, 
as shown  for 
$\tilde{\mu}_m=100^{-1}$, 
the temperature $T_1=1.1$ quickly cools  to $T_1=1.0$, compared with other cases 
(for $\tilde{\mu}_m=\LRc{500^{-1},1000^{-1},5000^{-1}}$). 
As a result, the horizontal momentum becomes 
the main driving source for the flows on $\Omega_1$. 
As we 
decrease $\tilde{\mu}_m$,
the heat transfer becomes weaker.
Thus, sharp gradients of temperature fields are observed near the interface and the boundaries at the top and the bottom.  
The temperature gradients induce the vertical motion of the fluid $\Omega_1$, 
which pushes the center of the circulation toward the right wall, as shown for $\tilde{\mu}_m=5000^{-1}$. 
 
Since this case clearly shows how cooled fluid moves away from the interface, 
we choose it for the comparison of IMEX coupling methods in the following section. 


\begin{figure}[h!t!b!]
  \centering
    \subfigure[$\tilde{\mu}_m=100^{-1}$]{
    \includegraphics[trim=2.5cm 1.5cm 2.65cm 2.2cm,
      clip=true,width=0.47\columnwidth]{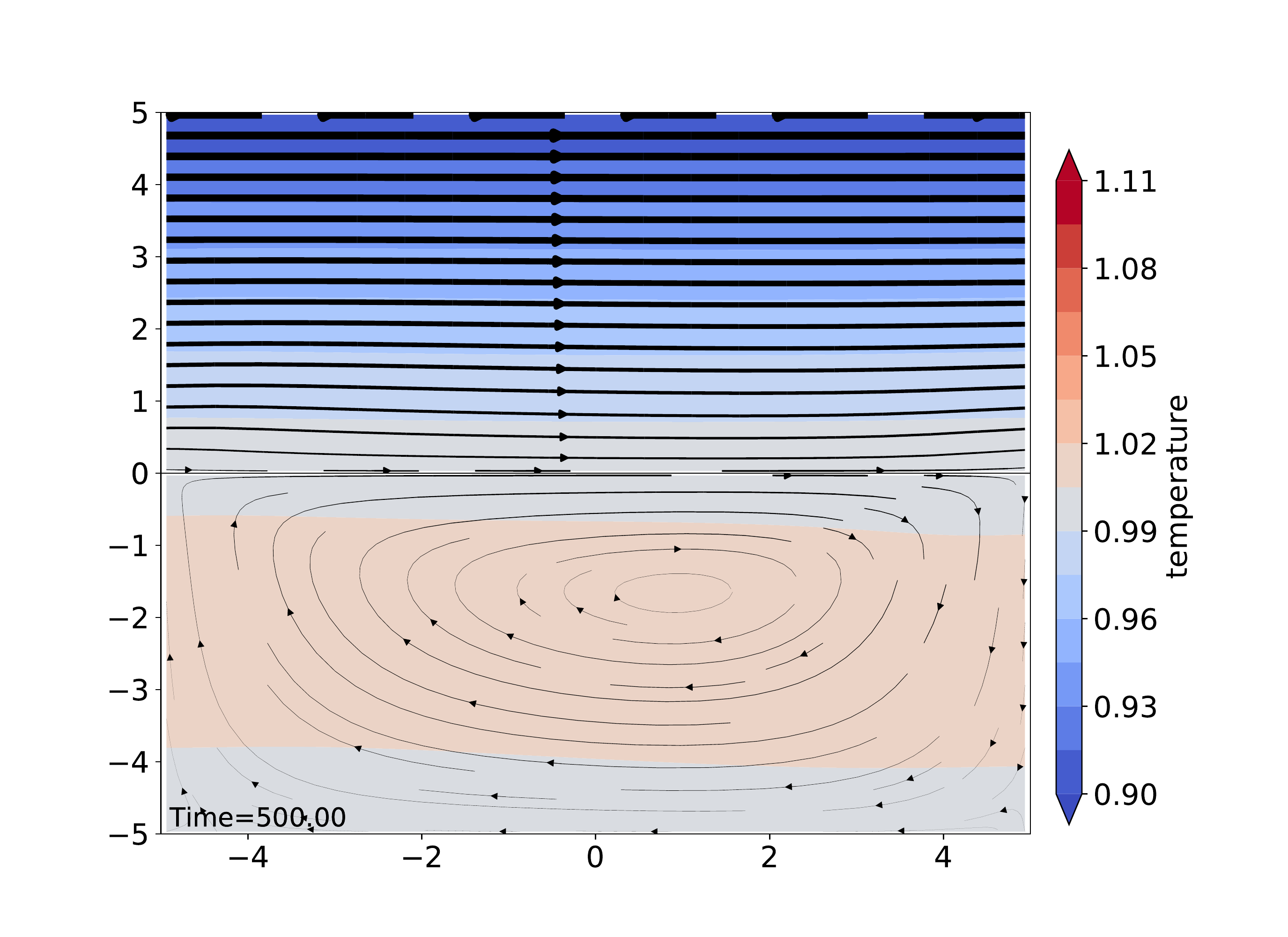}
  }
  \subfigure[$\tilde{\mu}_m=500^{-1}$]{
    \includegraphics[trim=2.5cm 1.5cm 2.65cm 2.2cm,
      clip=true,width=0.47\columnwidth]{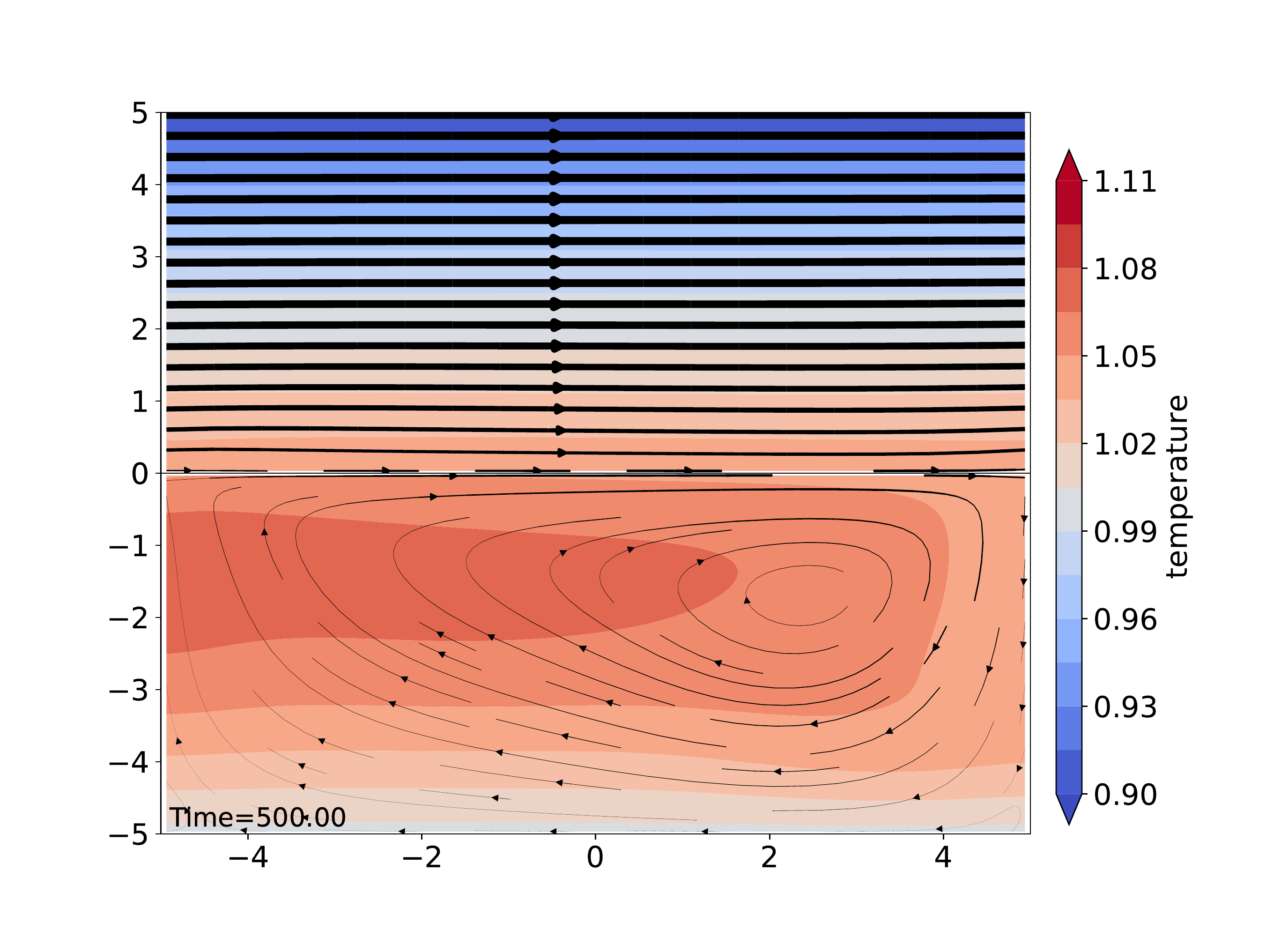}
  }\\
  \subfigure[$\tilde{\mu}_m=1000^{-1}$]{
    \includegraphics[trim=2.5cm 1.5cm 2.65cm 2.2cm,
      clip=true,width=0.47\columnwidth]{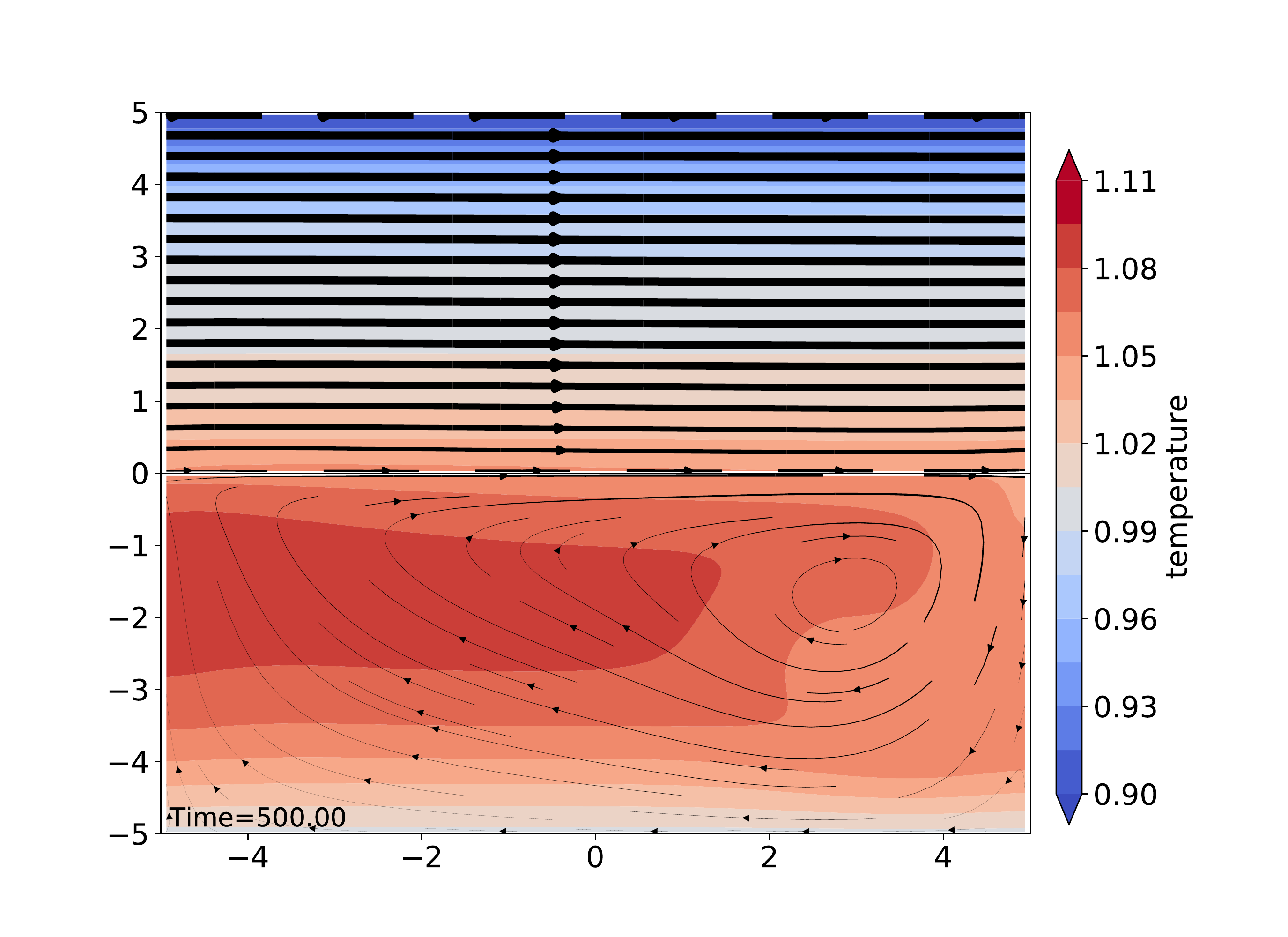}
  }
  \subfigure[$\tilde{\mu}_m=5000^{-1}$]{
    \includegraphics[trim=2.5cm 1.5cm 2.65cm 2.2cm,
      clip=true,width=0.47\columnwidth]{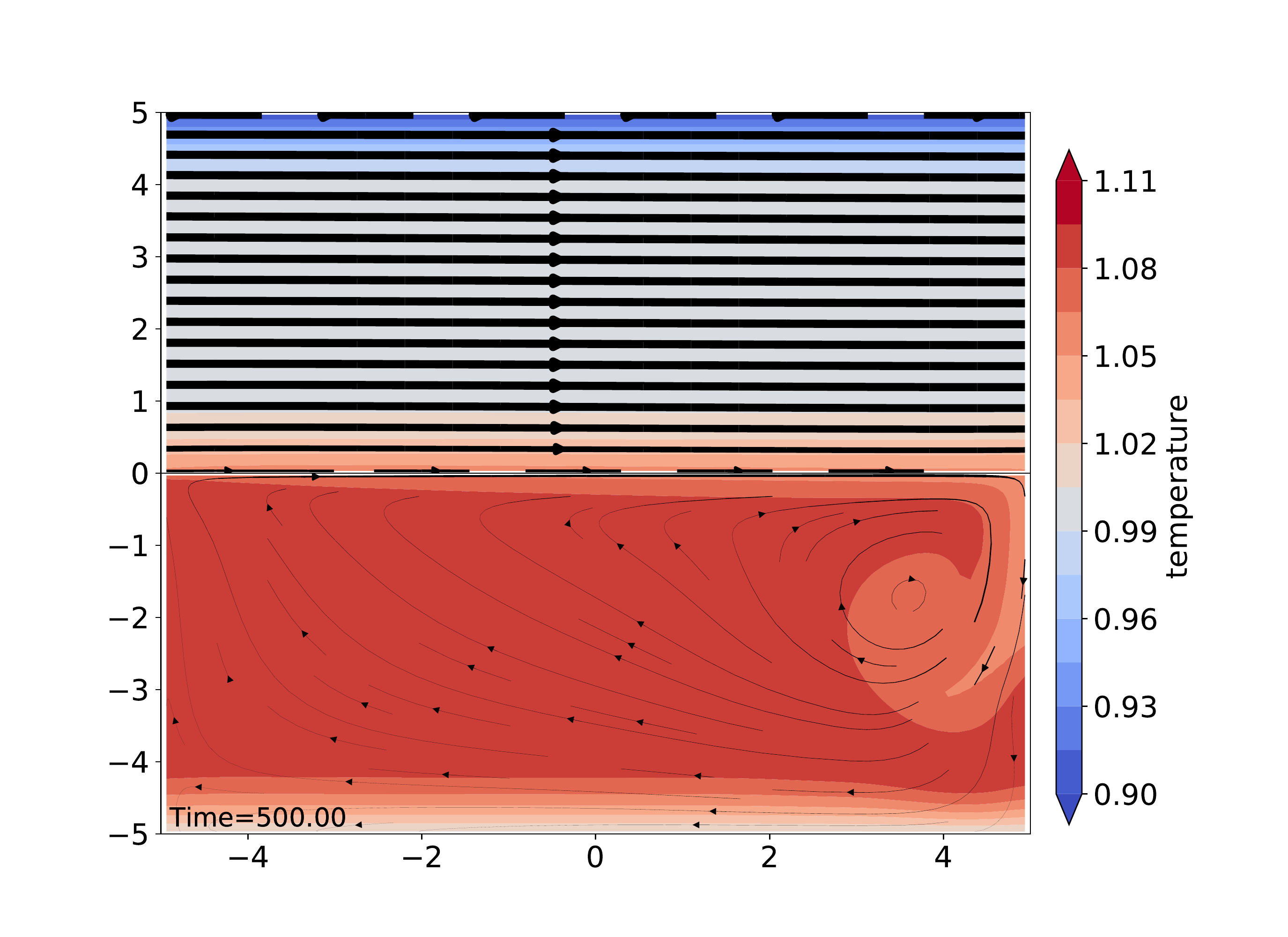}
  }
  \caption{Wind-driven flows: the snapshots of the temperature field at $t=500$.
  Simulations are performed 
  on the grid with $80\times80$ elements on $\Omega_1$ 
  and $80 \times 80$ elements on $\Omega_2$ using RK4 methods.}
  \figlab{wdf-re-ss}
\end{figure}

\subsubsection{IMEX coupling methods for $\tilde{\mu}_m=5000^{-1}$} 

We first conduct numerical experiments for ARK4($\Lb^z$,TC)
on the grid with $100\times500$ elements on $\Omega_1$ and $100 \times 80$ elements on $\Omega_2$ for $t\in\LRc{0,500}$.
The timestep size is taken as $\dt=0.05$, which corresponds to $Cr_1=5.38$ and $Cr_2=0.88$. 
In Figure \figref{wdf-re5000-hevi-ark4-tc-ss} we plot the temperature fields with streamlines (black solid lines) over the simulation period.
We see the temperature near the top and the bottom boundaries decreases 
because of the cold wall boundary condition. At the interface, 
the temperature is getting hotter on $\Omega_2$ and colder on $\Omega_1$ through the heat exchange. 
The cooled fluid moves along the right wall and rolls up following the circulation at $t=500$. 
The temperature field as well as the streamline at $t=500$ shows good agreement with the RK4 solution 
in Figure \figref{wdf-re5000-diff}(a).
The difference between RK4 and ARK4 ($\Lb^z$,TC) is within $\mathcal{O}(10^{-5})$ in Figure \figref{wdf-re5000-diff}(b).

%
Now we compare the performance of IMEX coupling methods. 
We choose the largest timestep size 
for each IMEX coupling method. 
\footnote{
  For example, 
  ARK2($\Lb^z$,TC) with $\dt=0.05$ leads to unstable solutions, thus,  
 we take $\dt=0.04$ for ARK2($\Lb^z$,TC). 
} 
We take the RK4 (with $\dt=0.01$) solution as a reference and 
measure the $L_2$ relative errors.
In Table \tabref{wdfh-re5000-coupling} we summarize the relative errors and wall-clock times for IMEX coupling methods.
The relative errors of TC, SC2, and CC2 coupling methods for ARK3 have 
the same order of accuracy for density, momentum, and total energy. 
For example, the order of relative error for density is $\mathcal{O}(10^{-4})$. 
Similarly, the relative errors of ARK2 ($\Lb^z$, SC2) and ARK2 ($\Lb^z$, CC2) 
have $\mathcal{O}(10^{-4})$ order of accuracy for density, momentum, and total energy.
Compared with ARK3 ($\Lb^z$, TC), ARK4 ($\Lb^z$, TC) is closer to the RK4 solution.
However, ARK2 ($\Lb^z$, SC2/CC2) has a smaller relative error compared with that of ARK3 ($\Lb^z$, SC2/CC2).
Compared to ARK4 ($\Lb^z$, SC2/CC2), ARK2 ($\Lb^z$, SC2/CC2) has slightly smaller relative errors of density and total energy. 
From an accuracy point of view, 
IMEX tight coupling benefits from using high-order methods; however, 
IMEX loose coupling does not. 
The reason may be that the heat and the horizontal momentum fluxes are not continuous at the interface for IMEX loose coupling schemes. 
From a stability viewpoint, 
ARK2 ($\Lb^z$, SC2/CC2) is more stable than ARK2 ($\Lb^z$, TC). 
This is because 
the Courant number at each substep for ARK2 ($\Lb^z$, SC2/CC2) becomes 
smaller than ARK2 ($\Lb^z$, TC) counterpart. 
By increasing subcycles in the explicit part. 
the acoustic mode in atmospheric model is resolved with smaller timestep size. 
As for the computational cost, IMEX ARK2 and ARK3 coupling schemes are comparable to the RK4 tight coupling method. 
\footnote{
  The relative errors with \texttt{1e-4} 
  tolerance for the Krylov solver
  are similar to those with \texttt{1e-2}
  in this example. 
  To enhance computational cost, we use 
  \texttt{1e-2}
  tolerance.
}


\begin{figure}[h!t!b!]
  \centering
    \includegraphics[trim=2.5cm 1.5cm 2.65cm 2.2cm,
      clip=true,width=0.47\columnwidth]{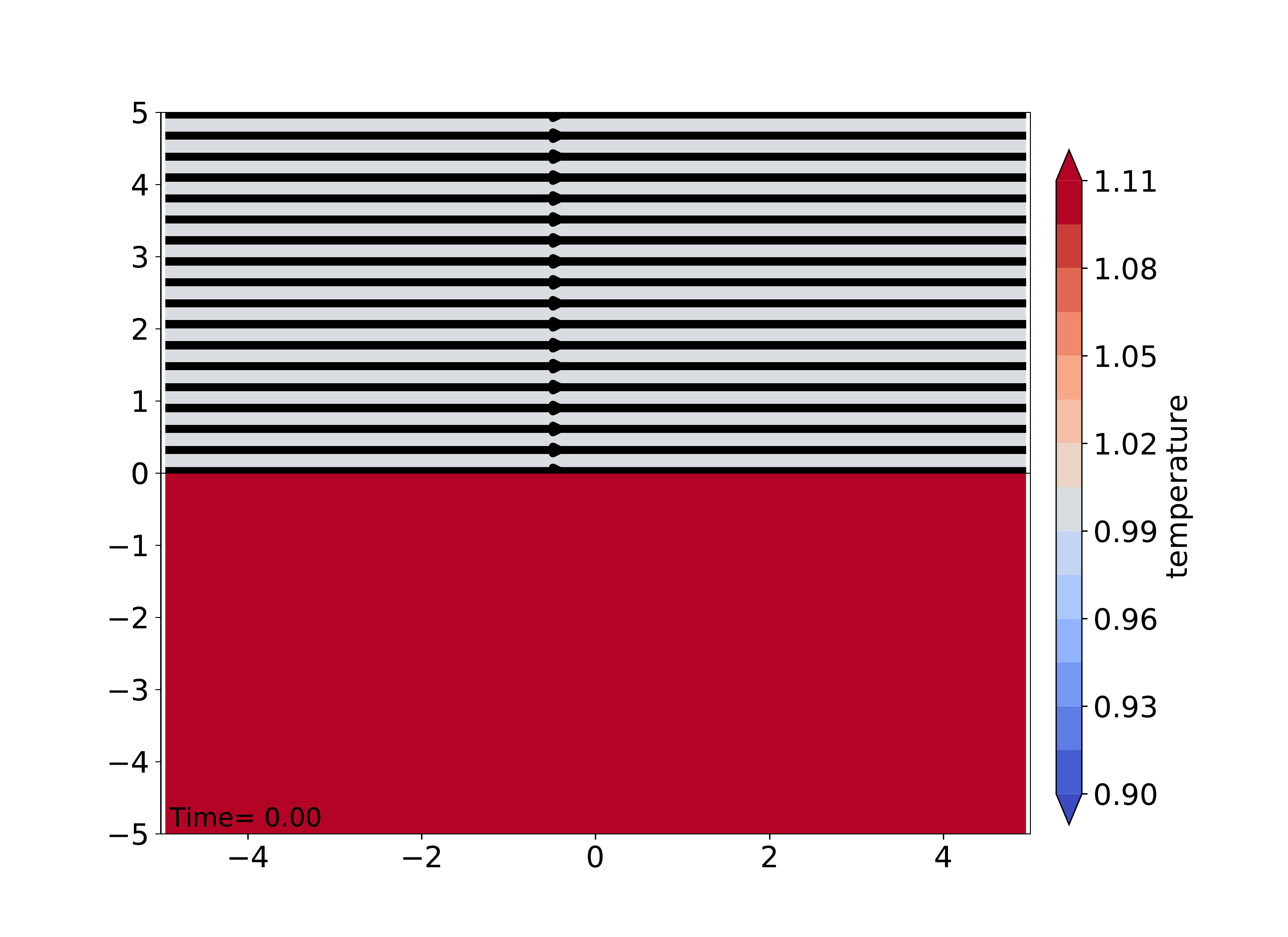}
    \includegraphics[trim=2.5cm 1.5cm 2.65cm 2.2cm,
      clip=true,width=0.47\columnwidth]{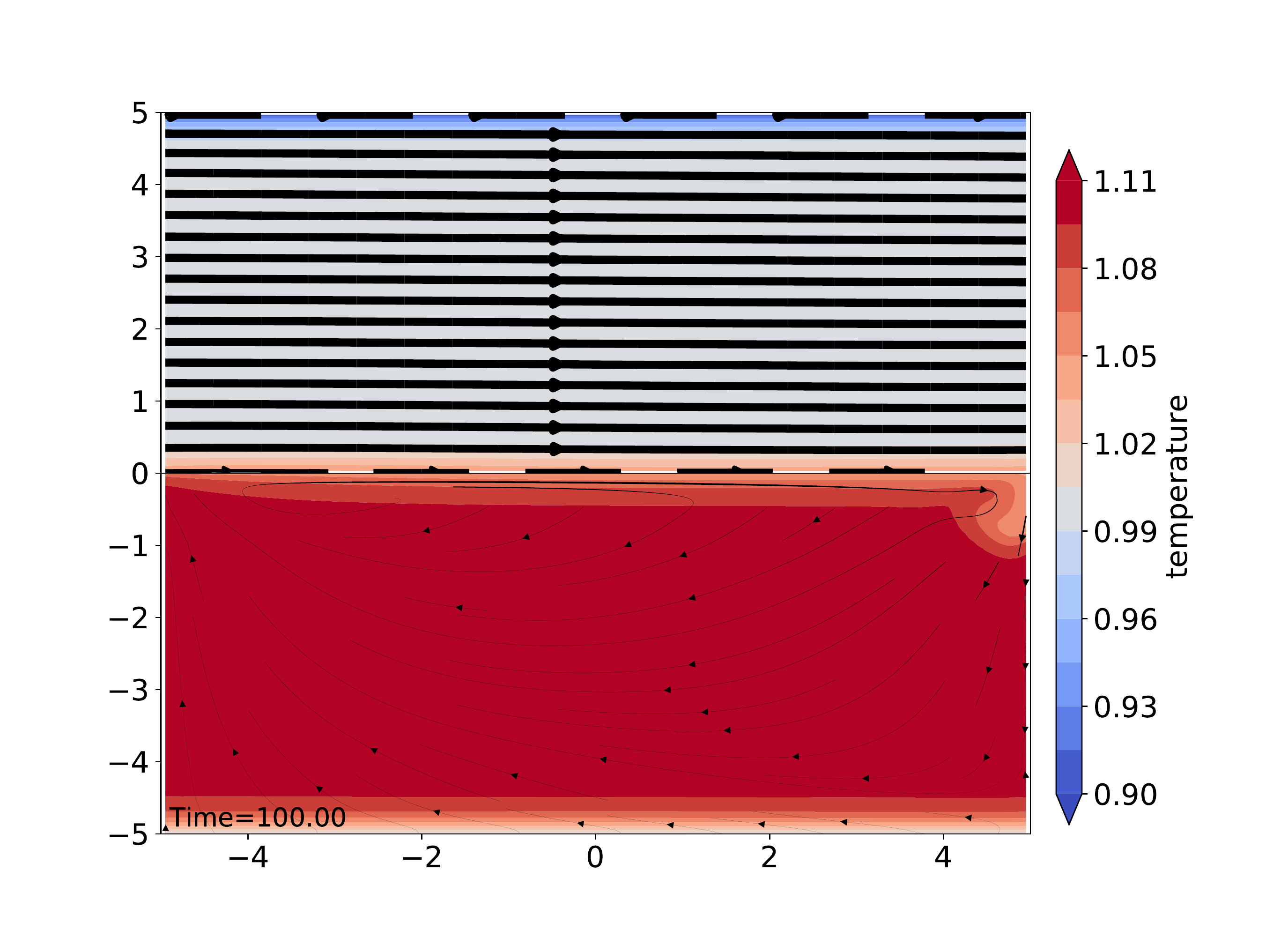}
    \includegraphics[trim=2.5cm 1.5cm 2.65cm 2.2cm,
      clip=true,width=0.47\columnwidth]{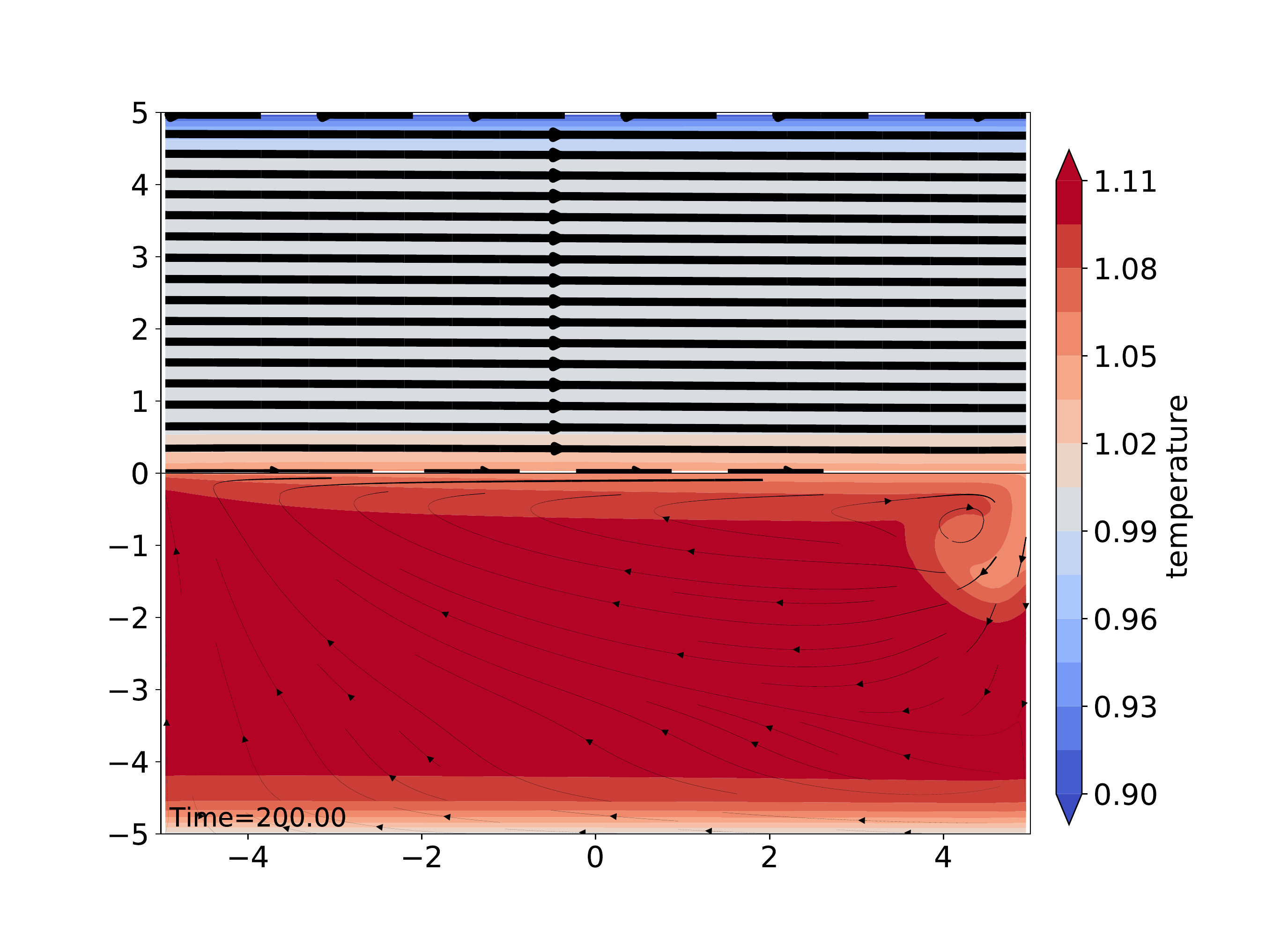}
    \includegraphics[trim=2.5cm 1.5cm 2.65cm 2.2cm,
      clip=true,width=0.47\columnwidth]{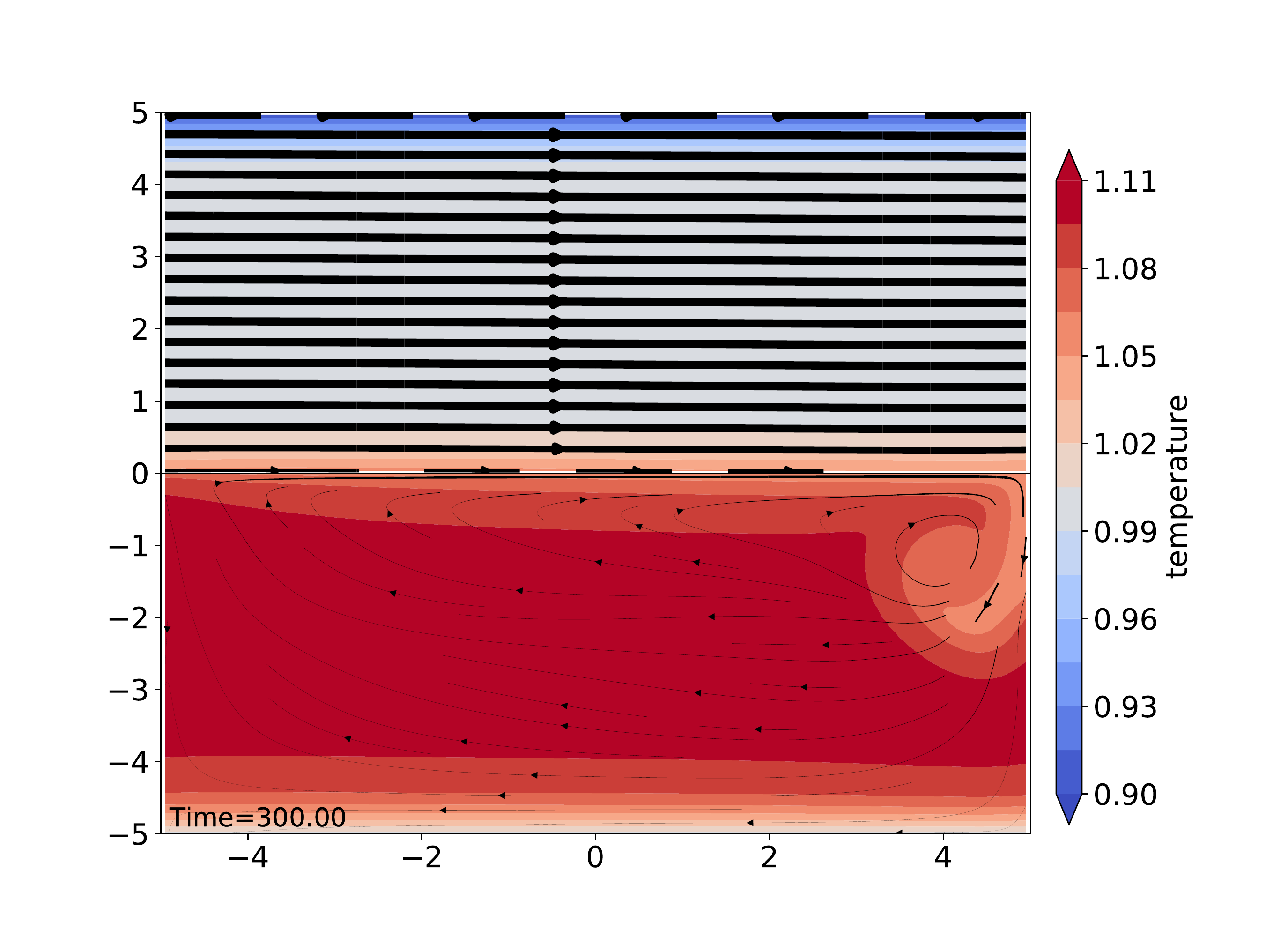}
    \includegraphics[trim=2.5cm 1.5cm 2.65cm 2.2cm,
      clip=true,width=0.47\columnwidth]{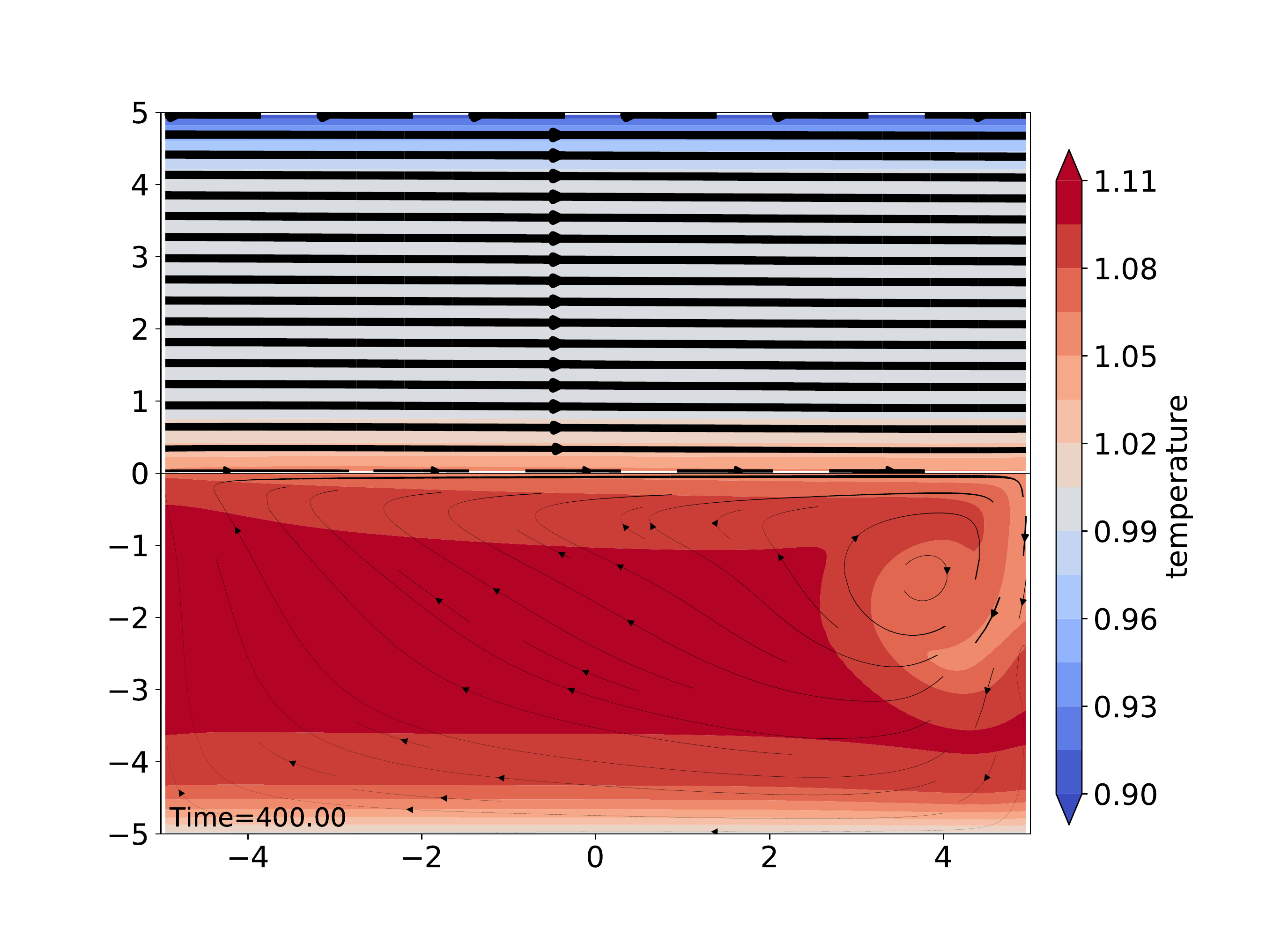}
    \includegraphics[trim=2.5cm 1.5cm 2.65cm 2.2cm,
      clip=true,width=0.47\columnwidth]{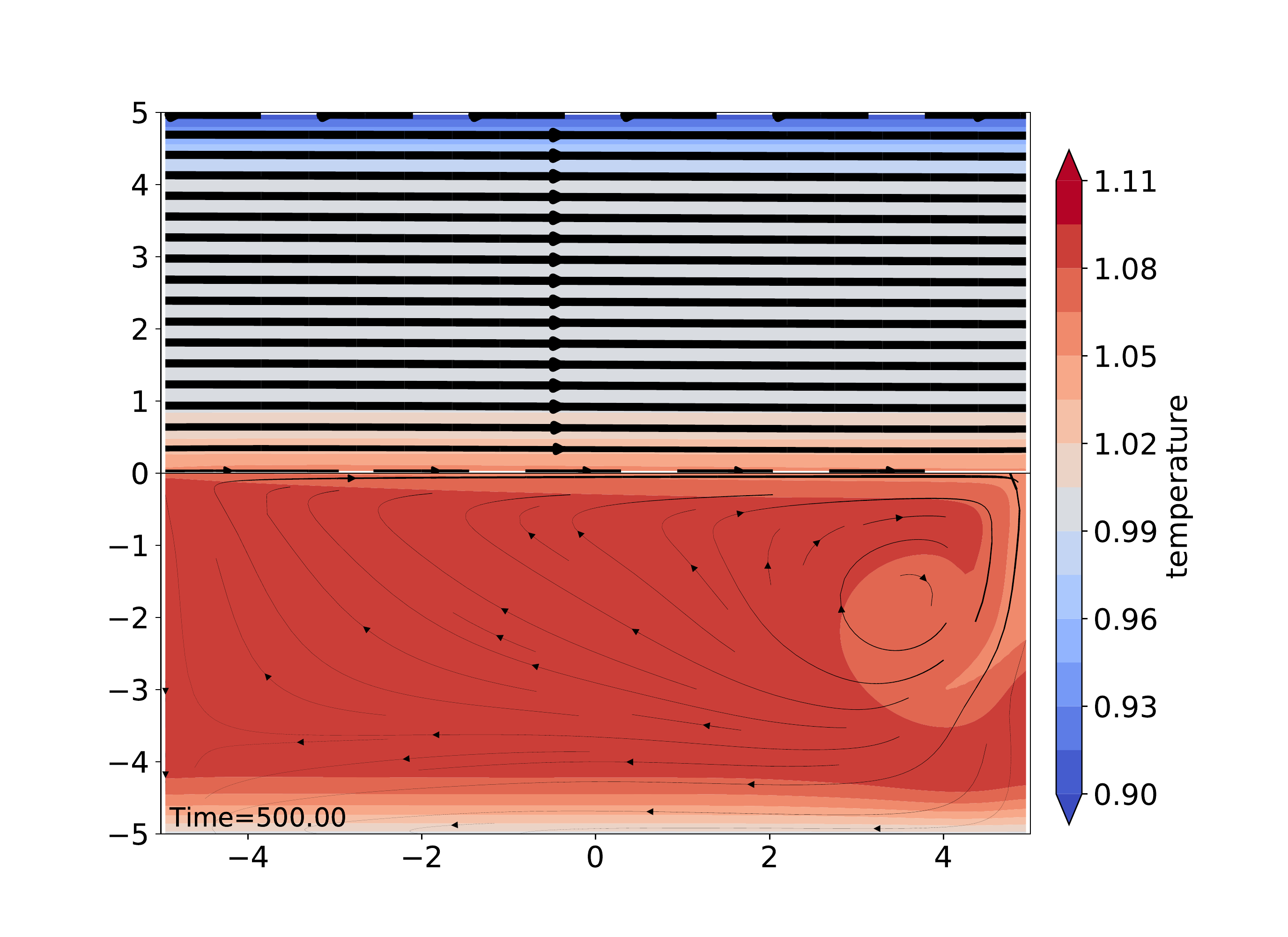}
  \caption{Evolution of the temperature field for the wind-driven flows:
  simulation is conducted with the ARK4 ($\Lb^z$,TC) method over 
  a mesh of $100\times500$ elements on $\Omega_1$ 
  and $100 \times 80$ elements on $\Omega_2$ for $t\in\LRs{0,500}$.
  Black solid lines represent streamline, and the color bar shows the range of temperature from $0.9$ to $1.1$.}
  
  \figlab{wdf-re5000-hevi-ark4-tc-ss}
\end{figure}

\begin{table}[t] 
  \caption{Relative errors and wall-clock times for IMEX coupling methods with respect to the RK4 ($\dt=0.01$) coupling at $t=500$.
  The domain is discretized with $100\times500$ on $\Omega_1$ and $100 \times 80$ on $\Omega_2$.  $Cr_2$ is the Courant number for the entire timestep; the substep number needs to be scaled by 1/2 for SC2 and CC2.
  }
  \tablab{wdfh-re5000-coupling} 
  \begin{center} 
    \begin{tabular}{*{1}{c}|*{1}{c}|*{1}{c}|*{1}{c}|*{1}{c}|*{1}{c}}  
    \hline 
    \multirow{2}{*}{ }
    & \multirow{2}{*}{$ \triangle t (Cr_1,Cr_2)$}
    & \multirow{2}{*}{$ \left\Vert \rho  - \rho_r \right\Vert  $}  
    & \multirow{2}{*}{$ \left\Vert \rho {\bf u} - \rho {\bf u}_r \right\Vert  $}  
    & \multirow{2}{*}{$ \left\Vert \rho E - \rho E_r \right\Vert  $}  
    & \multirow{2}{*}{wc[s]} \tabularnewline 
    & && & &  \tabularnewline 
    \hline\hline 
    RK4                 & 0.01 (1.08,0.18)&              - &              - &              - & 5570.06\tabularnewline
    \multicolumn{6}{c}{} \tabularnewline
    ARK2 ($\Lb^z$,TC)  & 0.04 (4.20,0.70)&       7.435E-05&       2.049E-04&       1.989E-04& 3322.84\tabularnewline
    ARK2 ($\Lb^z$,SC2) & 0.05 (5.24,0.88)&       1.310E-04&       3.075E-04&       3.318E-04& 3280.01\tabularnewline
    ARK2 ($\Lb^z$,CC2) & 0.05 (5.24,0.88)&       1.393E-04&       3.082E-04&       3.323E-04& 3312.13\tabularnewline
    \multicolumn{6}{c}{} \tabularnewline
    ARK3 ($\Lb^z$,TC)   & 0.05 (5.24,0.88)&       7.231E-04&       1.128E-03&       1.959E-03& 5214.52\tabularnewline
    ARK3 ($\Lb^z$,SC2)  & 0.05 (5.24,0.88)&       7.266E-04&       1.128E-03&       1.959E-03& 5400.41\tabularnewline
    ARK3 ($\Lb^z$,CC2)  & 0.05 (5.24,0.88)&       7.336E-04&       1.129E-03&       1.959E-03& 5363.56\tabularnewline
    \multicolumn{6}{c}{} \tabularnewline
    ARK4 ($\Lb^z$,TC)   & 0.05 (5.24,0.88)&       1.468E-04&       1.661E-04&       3.956E-04& 6355.54\tabularnewline
    ARK4 ($\Lb^z$,SC2)  & 0.05 (5.24,0.88)&       1.452E-04&       1.641E-04&       3.961E-04& 7361.94\tabularnewline
    ARK4 ($\Lb^z$,CC2)  & 0.05 (5.24,0.88)&       1.452E-04&       1.660E-04&       3.966E-04& 7867.85\tabularnewline
  \hline\hline 
  \end{tabular} 
  \end{center} 
\end{table}

\begin{figure}[h!t!b!]
  \centering
    
  \subfigure[RK4]{
    \includegraphics[trim=2.5cm 1.5cm 2.65cm 2.2cm,
      clip=true,width=0.47\columnwidth]{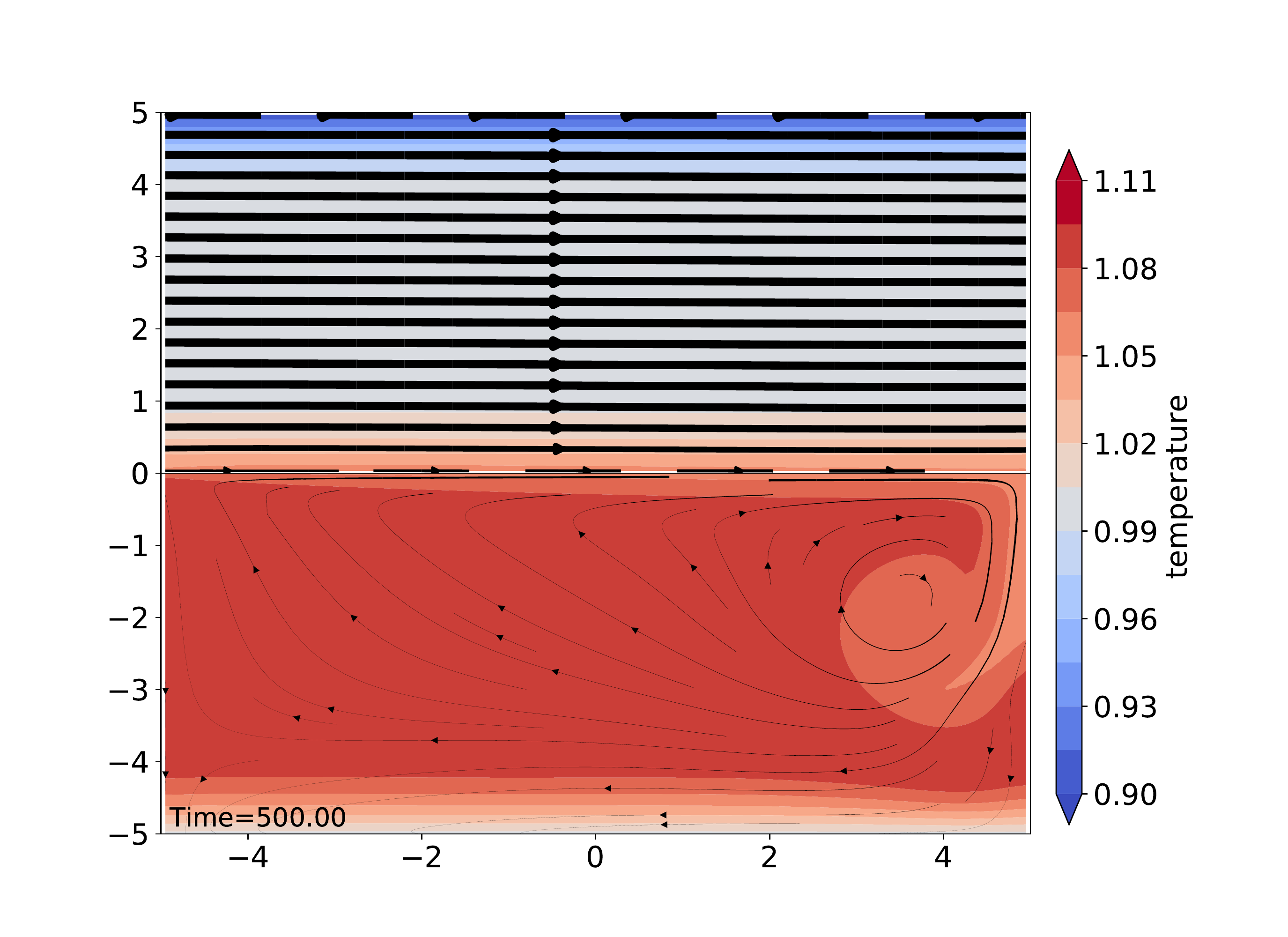}
  }
  \subfigure[Difference]{
    \includegraphics[trim=2.5cm 1.5cm 2.65cm 2.2cm,
      clip=true,width=0.47\columnwidth]{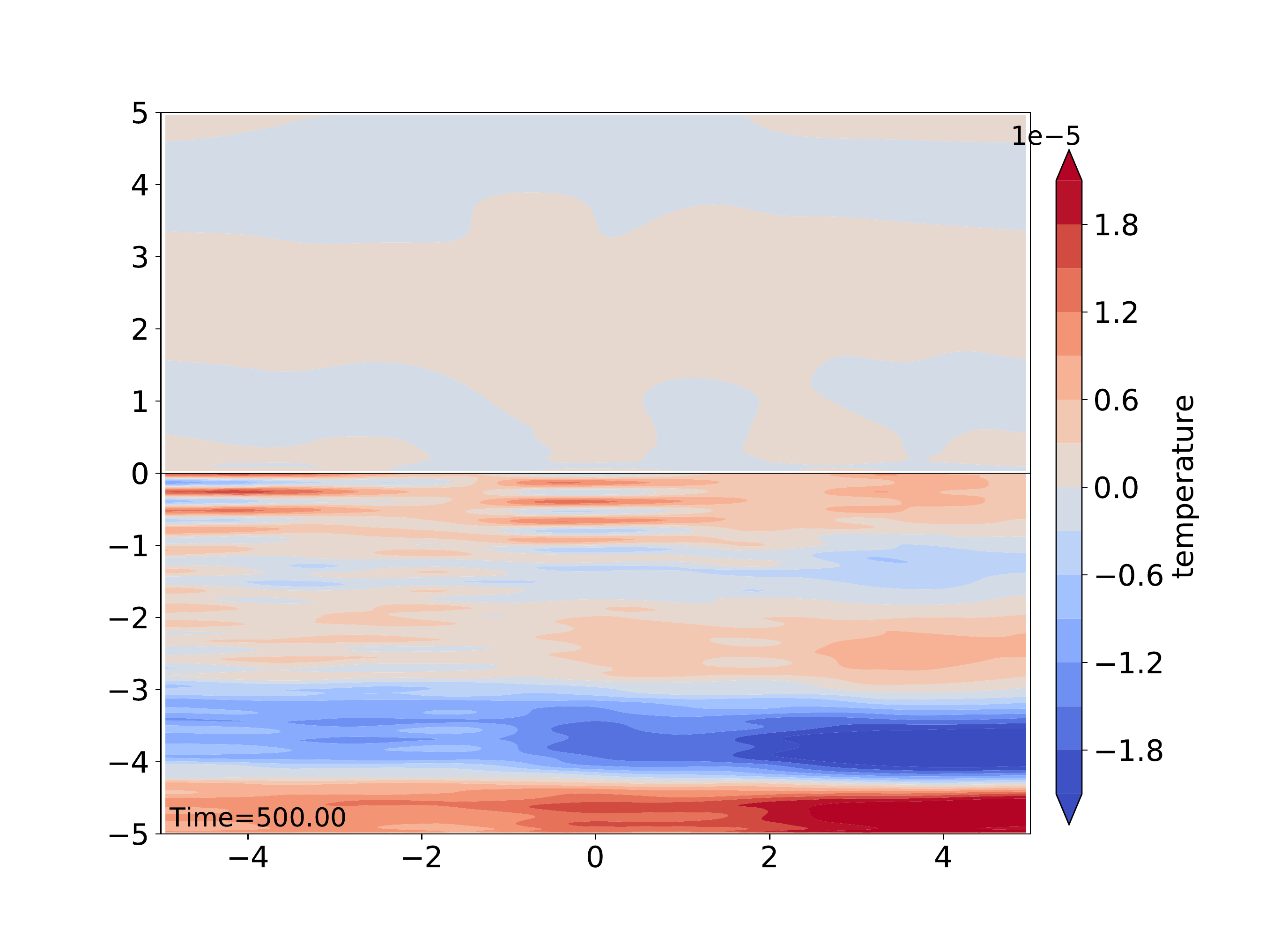}
  }
  \caption{Wind-driven flows at $t=500$: (a) temperature field with streamlines of the RK4 solution  
  and (b) the difference between ARK4 ($\Lb^z$,TC) and RK4. 
  Simulations are performed 
  on the grid with $100\times500$ elements on $\Omega_1$ 
  and $100 \times 80$ elements on $\Omega_2$.
  We take $\dt=0.05$ for ARK4 ($\Lb^z$,TC) and $\dt=0.01$ for RK4.}
  \figlab{wdf-re5000-diff}
\end{figure}

\subsection{CNS2: Wind-driven flows with Kelvin--Helmholtz instability}
\seclab{CNS2-KHI}
 
Kelvin--Helmholtz instability (KHI) is an important mechanism in the development of turbulence in the stratified atmosphere and ocean. 
KHI arises when two fluids have different densities and tangential velocities across the interface. Small disturbances such as waves at the interface grow exponentially, and the interface rolls up into KH rotors \cite{drazin2004hydrodynamic, springel2010pur, lecoanet2016validated}.
To see the nonlinear evolution of KHI, 
we add a jet to $\Omega_2$ in the wind-driven example above.
We also place a vortex on $\Omega_1$.
Boundary conditions are the same as in the wind-driven example. 
The initial conditions are chosen as 
\begin{align*}
  \rho_1 & = \LRp{ 1 - \frac{(\gamma-1)\beta^2}{8 \alpha \gamma \pi^2} e^{\alpha\LRp{1-r^2}} }^{\frac{1}{\gamma-1}},\\
  u_1 & =  \frac{\beta}{2\pi}\tilde{z}_1 e^{\frac{\alpha}{2}\LRp{1-r^2} }, \\
  w_1 & = - \frac{\beta}{2\pi}\tilde{x}_1 e^{\frac{\alpha}{2}\LRp{1-r^2} }, \\
  \pres_1 & = 1.1 \gamma^{-1} \rho_1^\gamma
\end{align*} 
with $r = \norm{\xb - \tilde{\xb}}$,
$\tilde{x}_1=x_1 - x_c$,
$\tilde{z}_1=z_1 - z_c$ for $\Omega_1$, 
\begin{align*}
  \rho_2 &= 1 + \half \LRp{ \tanh\LRp{\frac{z_2 - s_1}{a}} - \tanh\LRp{\frac{z_2 - s_2}{a}} },\\
     u_2 &= 0.1 + \LRp{ \tanh\LRp{\frac{z_2-s_1}{a}} - \tanh\LRp{\frac{z_2-s_2}{a}} - 1 },\\
     w_2 &= A \sin\LRp{2\pi x_2} \LRp{ \exp\LRp{-\frac{(z_2-s_1)^2}{\sigma^2} } 
                                 + \exp\LRp{-\frac{(z_2-s_2)^2}{\sigma^2} }  }, \\
  \pres_2 &= \gamma^{-1}
\end{align*}
for $\Omega_2$. 
Here, we take $a=0.05$, $A=0.01$, $\sigma=0.2$,
 $s_1=2$, $s_2=3$,
$\xb_c=(0,-3)$, $\alpha=5$
and $\beta=0.5$. 
We choose the fluid parameters of $\gamma=1.4$, $Pr=0.72$, $\tilde{c}_p=(\gamma-1)^{-1}$, and $\tilde{\mu}_m=5000^{-1}$.

We conduct the simulation with the ARK4 ($\Lb^z$,TC) method over 
  the mesh of $100\times400$ elements on $\Omega_1$ 
  and $100 \times 80$ elements on $\Omega_2$\footnote{
  We use $10^{-2}$ Krylov tolerance for a linear solver.
} in Figure \figref{khi-re5000-hevi-ark4-tc-ss}.
The evolution of temperature fields is shown for $t\in\LRs{0,500}$.
Kelvin--Helmholtz waves are well developed at $t=100$ and start to diffuse while mixing fluids. 
Meanwhile, because of the heat and horizontal momentum exchange, 
fluid on $\Omega_1$ cools  near the interface and moves along 
the outside of vortex as time passes.  



\begin{figure}[h!t!b!]
  \centering
    \includegraphics[trim=2.5cm 1.5cm 2.65cm 2.2cm,
      clip=true,width=0.47\columnwidth]{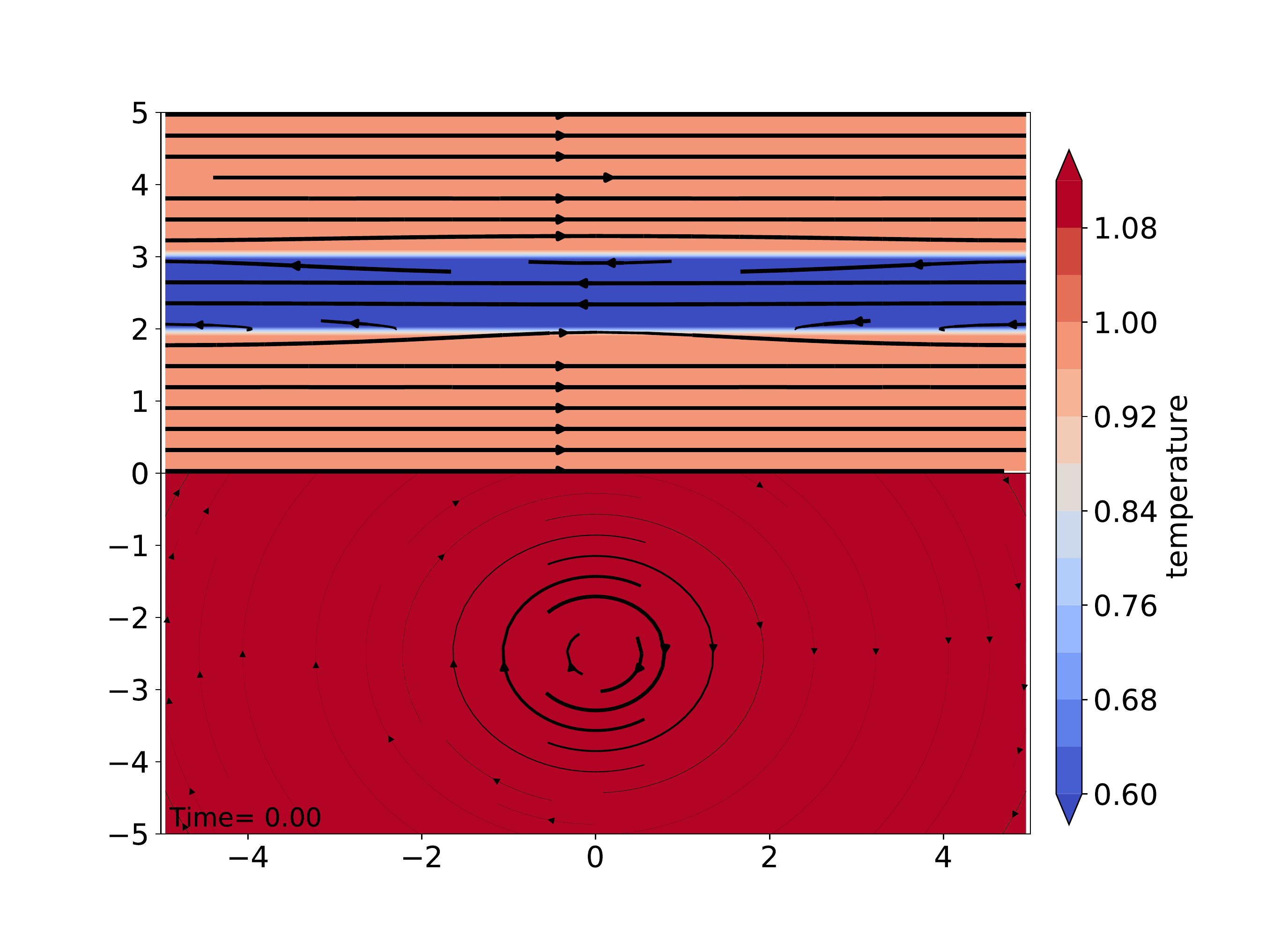}
    \includegraphics[trim=2.5cm 1.5cm 2.65cm 2.2cm,
      clip=true,width=0.47\columnwidth]{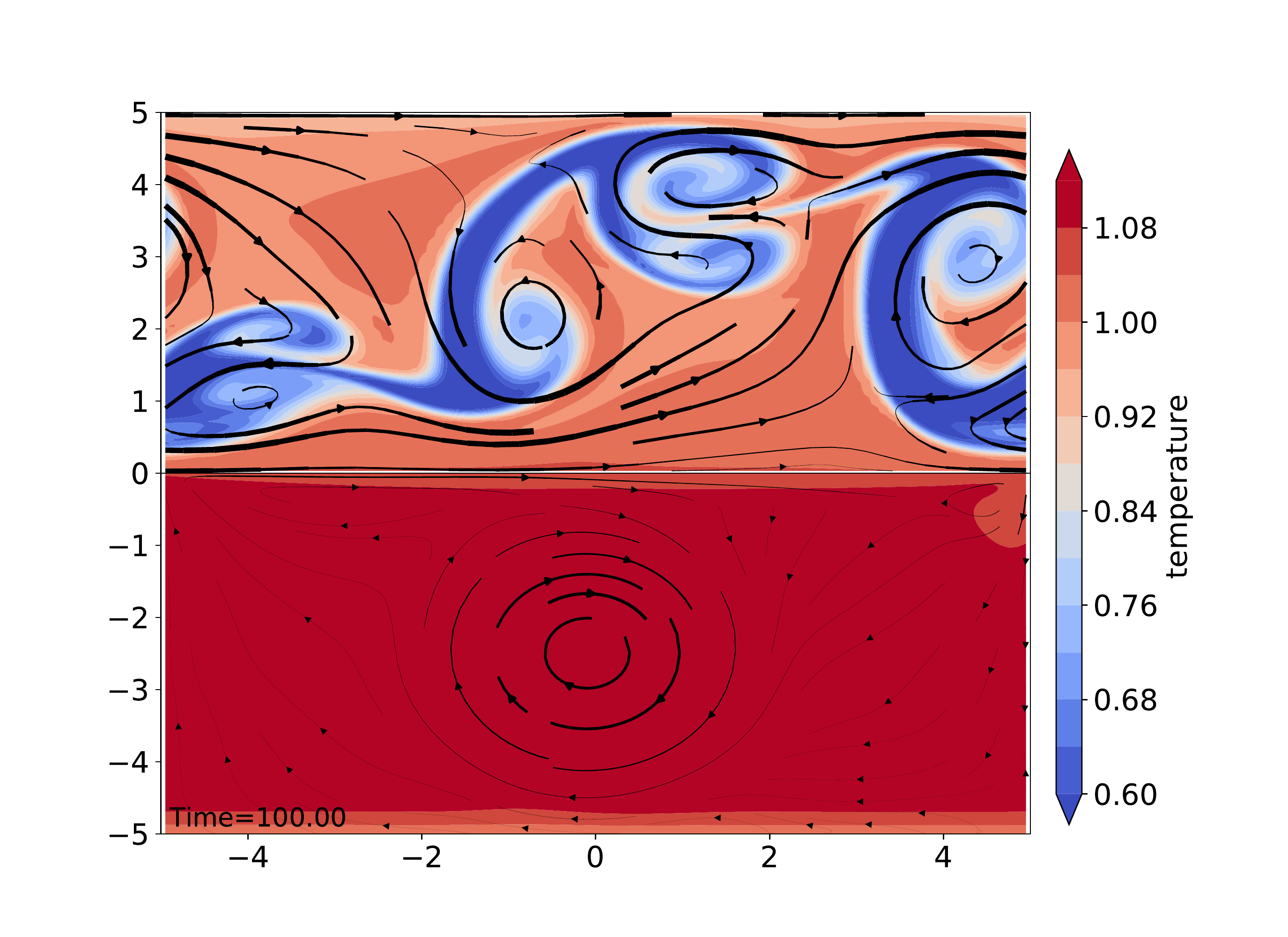}
    \includegraphics[trim=2.5cm 1.5cm 2.65cm 2.2cm,
      clip=true,width=0.47\columnwidth]{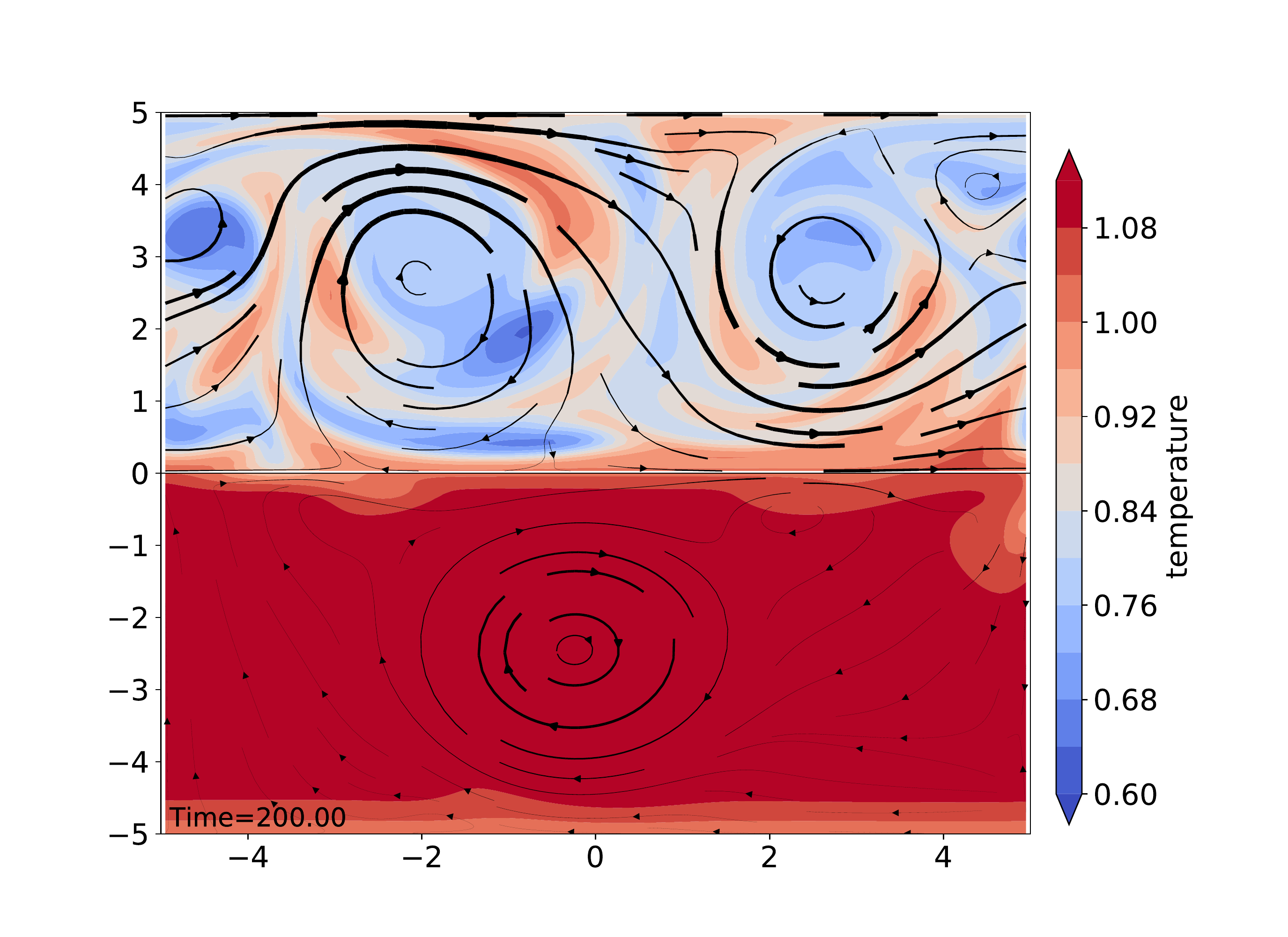}
    \includegraphics[trim=2.5cm 1.5cm 2.65cm 2.2cm,
      clip=true,width=0.47\columnwidth]{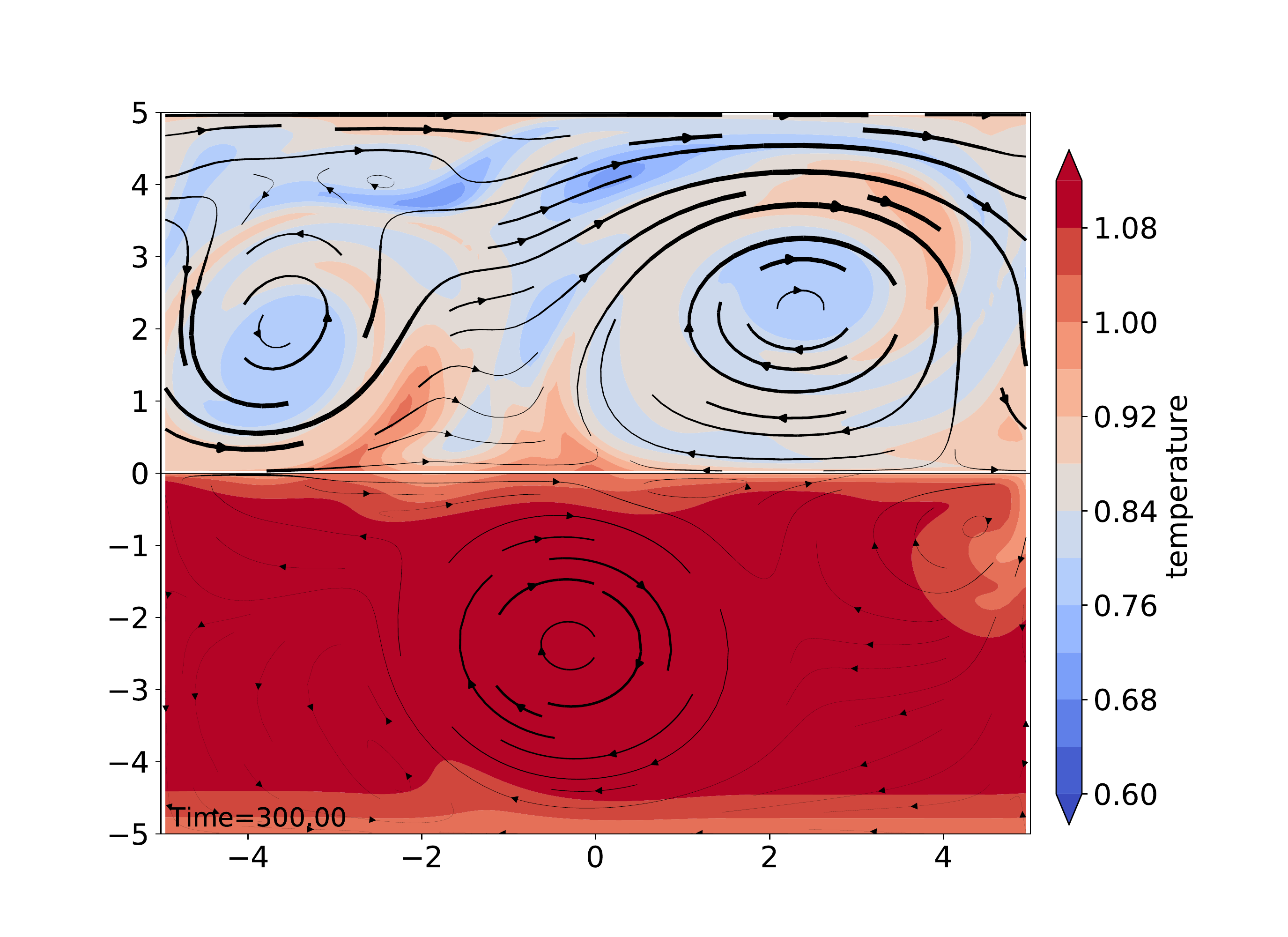}
    \includegraphics[trim=2.5cm 1.5cm 2.65cm 2.2cm,
      clip=true,width=0.47\columnwidth]{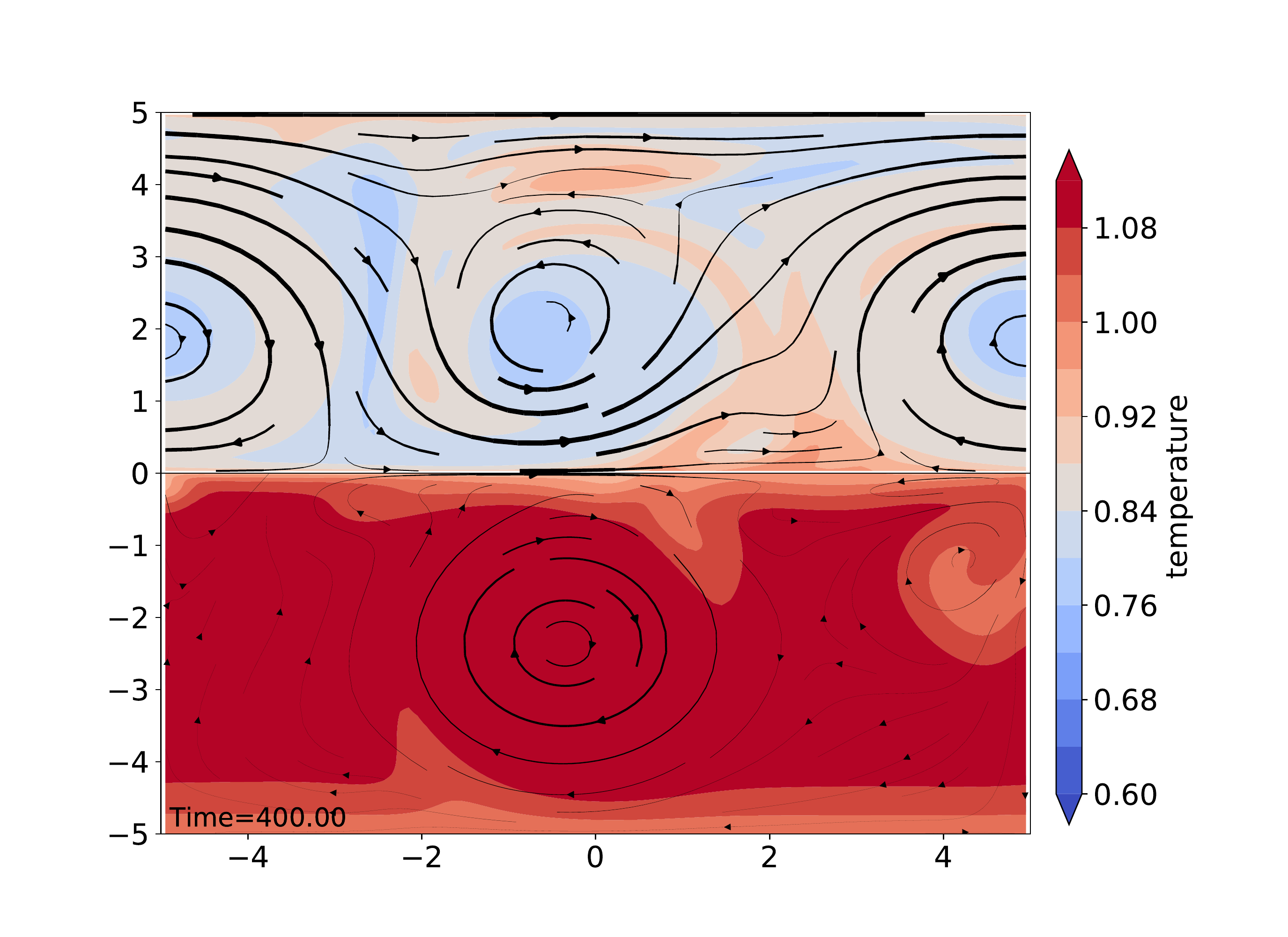}
    \includegraphics[trim=2.5cm 1.5cm 2.65cm 2.2cm,
      clip=true,width=0.47\columnwidth]{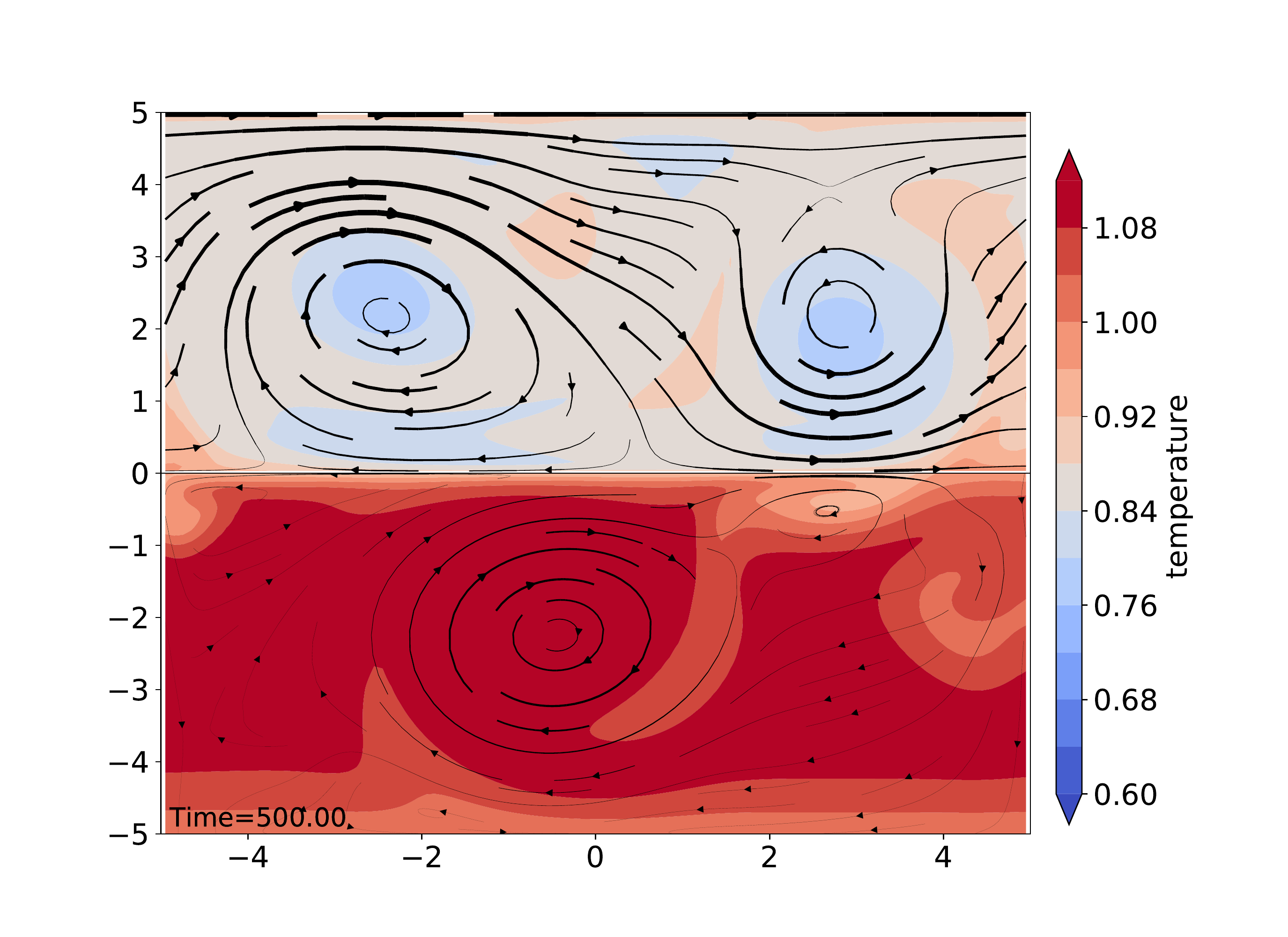}
  \caption{Evolution of the temperature field for the wind-driven flows with Kelvin--Helmholtz instability:
  simulation is conducted with the ARK4 ($\Lb^z$,TC) method over 
 a mesh of $100\times400$ elements on $\Omega_1$ 
  and $100 \times 80$ elements on $\Omega_2$ for $t\in\LRs{0,500}$. }
  
  \figlab{khi-re5000-hevi-ark4-tc-ss}
\end{figure}

Figure \figref{khi-re5000-IMEX-ss} shows temperature fields at $t=500$ for RK4, 
ARK2 ($\Lb^z$,TC),
ARK2 ($\Lb^z$,SC2), and ARK2 ($\Lb^z$,CC2).
We choose the timestep sizes as $\dt=0.01$ for RK4, 
$\dt=0.04$ for ARK2 ($\Lb^z$,TC),
$\dt=0.05$ for ARK2 ($\Lb^z$,SC2) and ARK2 ($\Lb^z$,CC2), and 
$\dt=0.1$ for ARK2 ($\Lb$,SC8) and ARK2 ($\Lb$,CC8). 
In general, all ARK2 solutions demonstrate good agreement with the RK4 solution.


\begin{figure}[h!t!b!]
  \centering
  \subfigure[RK4]{
    \includegraphics[trim=2.5cm 1.5cm 2.65cm 2.2cm,
      clip=true,width=0.47\columnwidth]{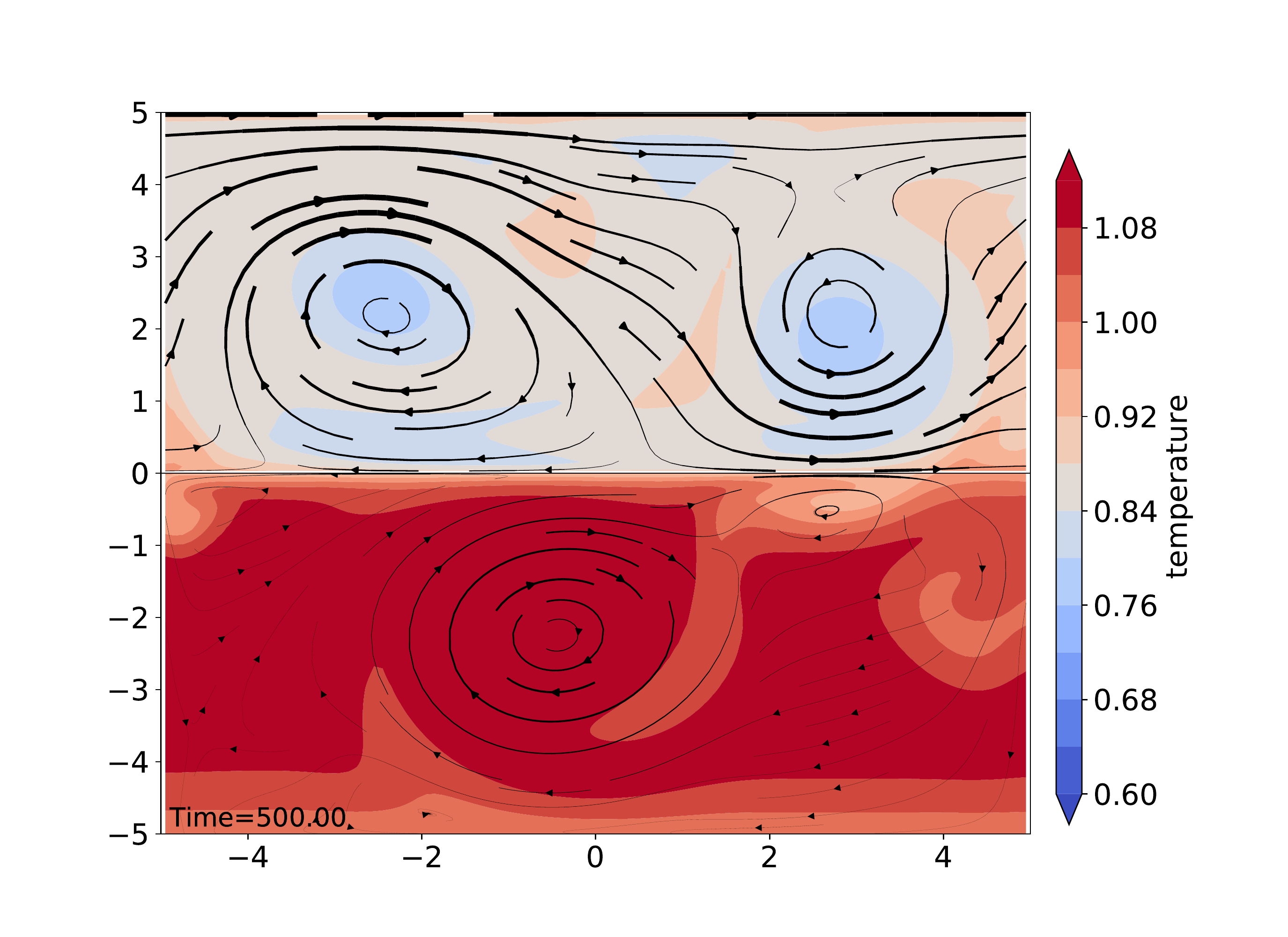}
  }
  \subfigure[ARK2($\Lb^z$,TC)]{
    \includegraphics[trim=2.5cm 1.5cm 2.65cm 2.2cm,
      clip=true,width=0.47\columnwidth]{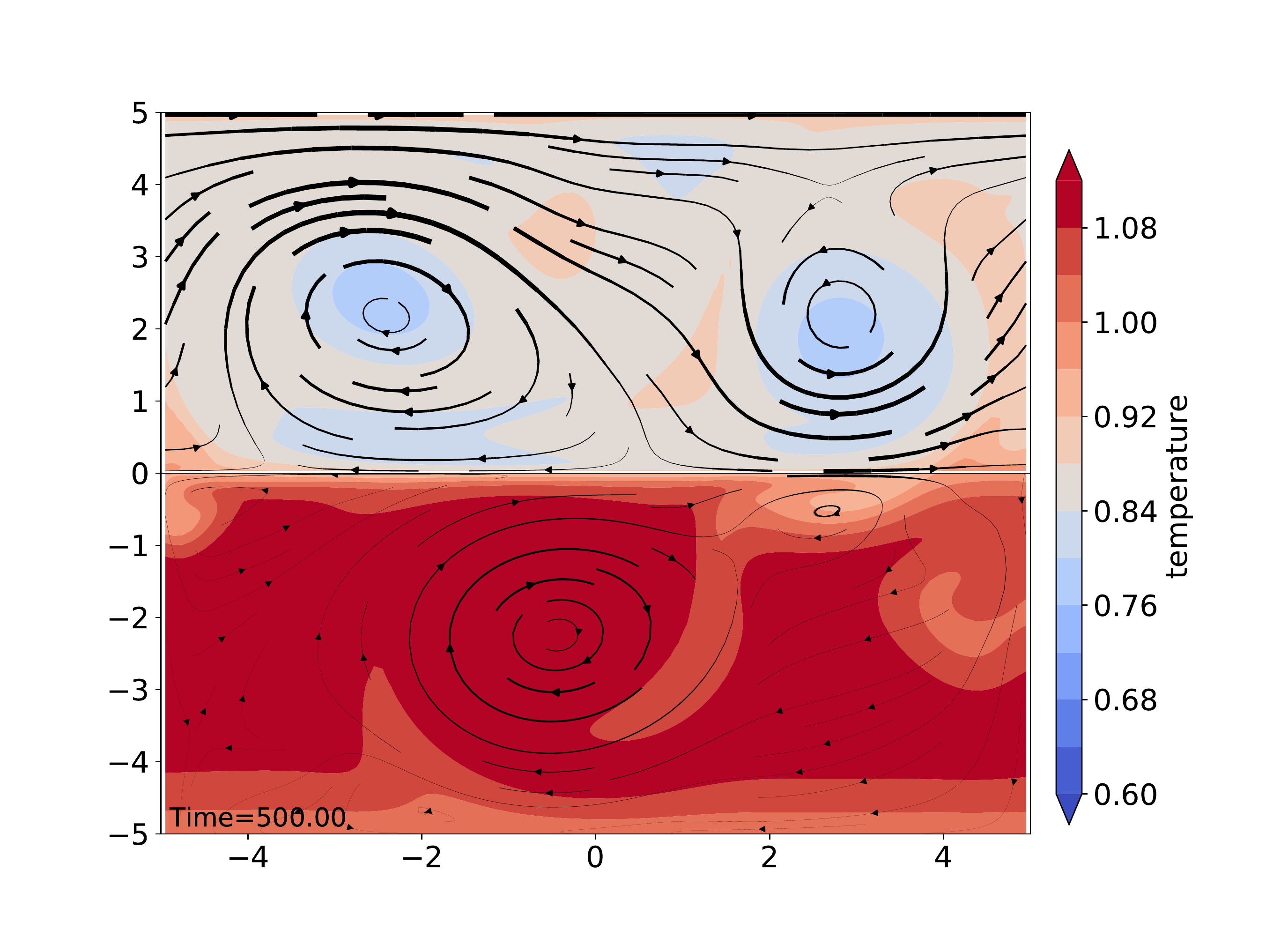}
  }
  \subfigure[ARK2($\Lb^z$,SC2)]{
    \includegraphics[trim=2.5cm 1.5cm 2.65cm 2.2cm,
      clip=true,width=0.47\columnwidth]{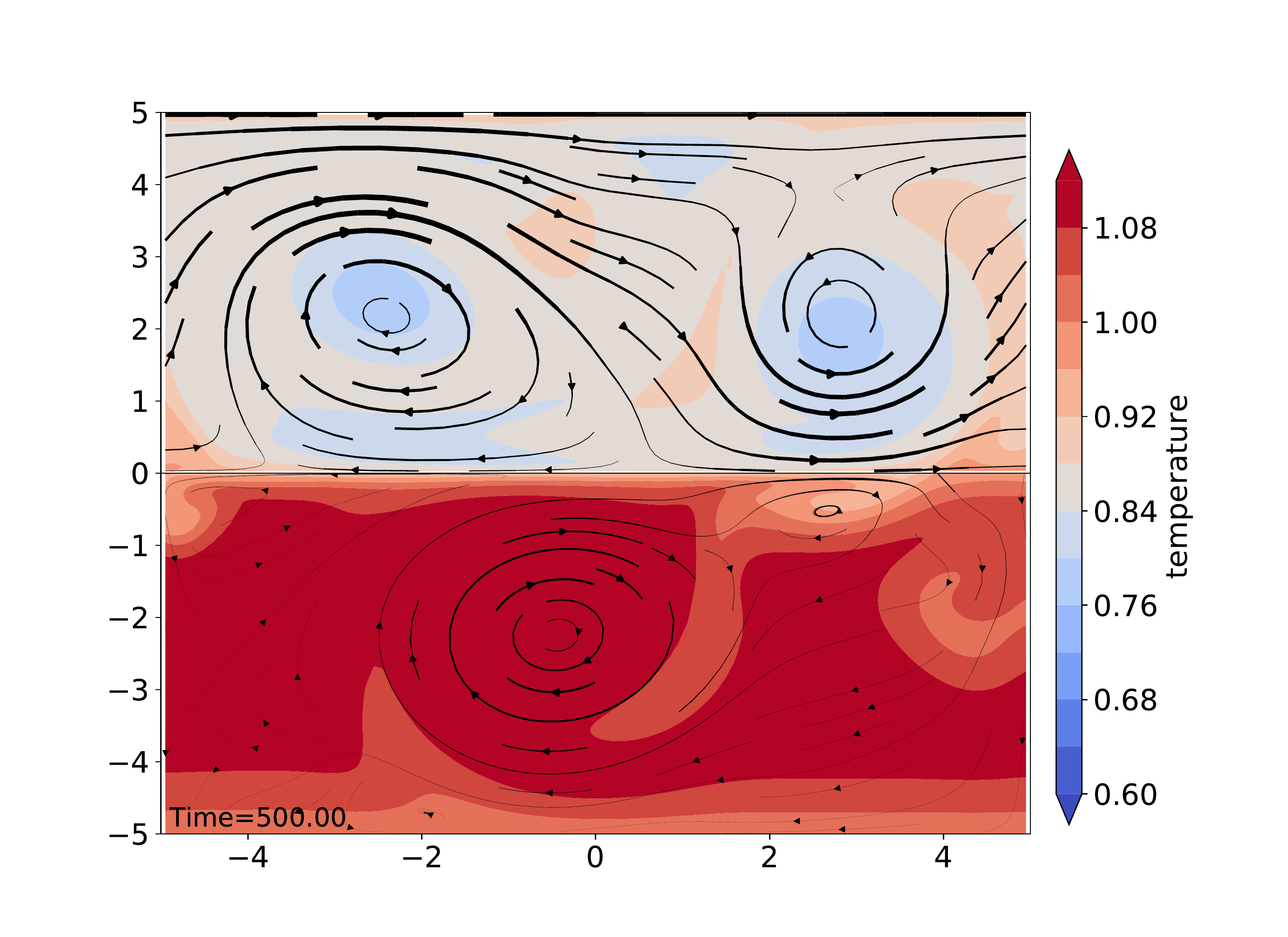}
  }
  \subfigure[ARK2($\Lb^z$,CC2)]{
    \includegraphics[trim=2.5cm 1.5cm 2.65cm 2.2cm,
      clip=true,width=0.47\columnwidth]{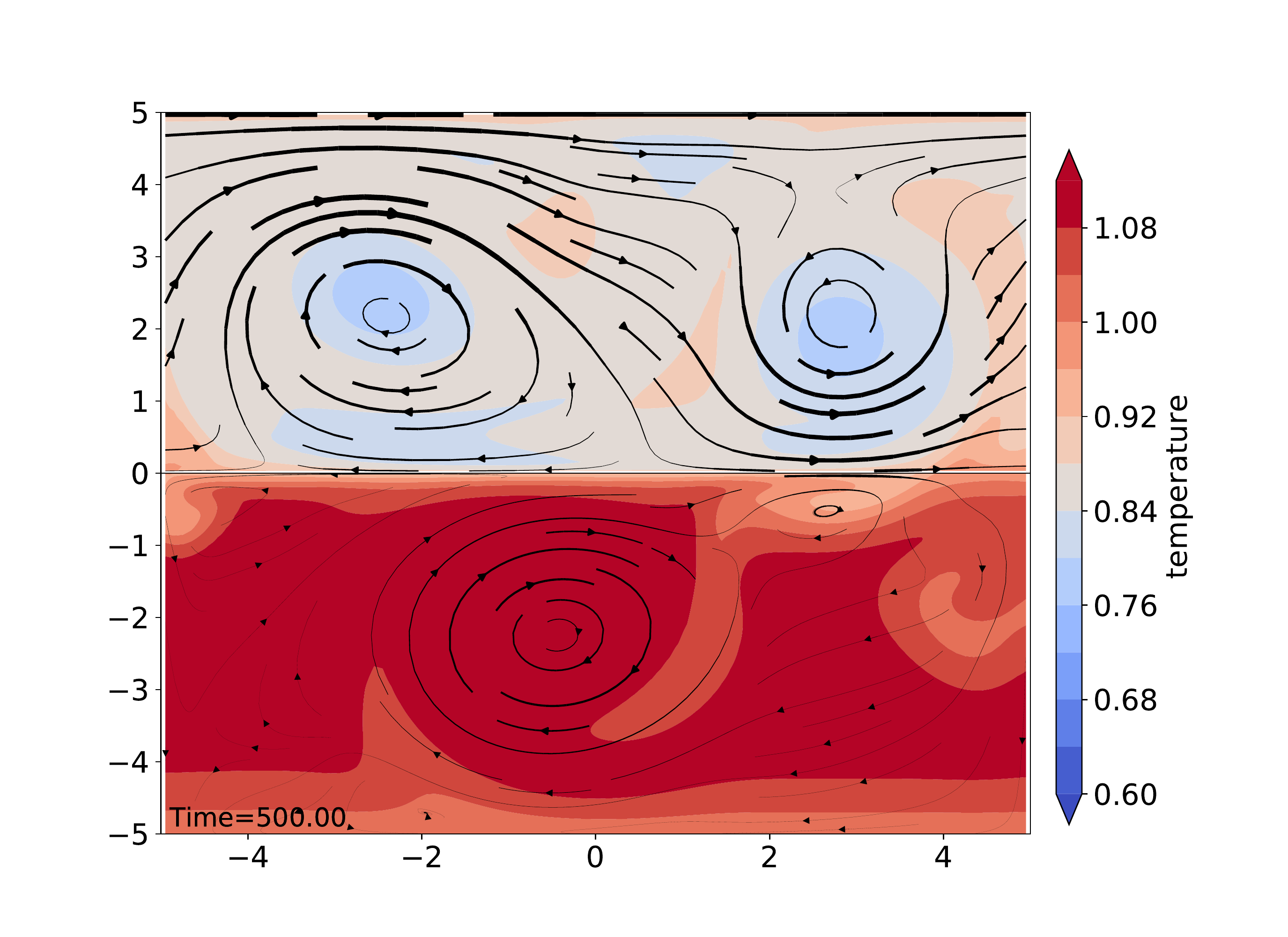}
  }
  \subfigure[ARK2($\Lb$,SC8)]{
    \includegraphics[trim=2.5cm 1.5cm 2.65cm 2.2cm,
      clip=true,width=0.47\columnwidth]{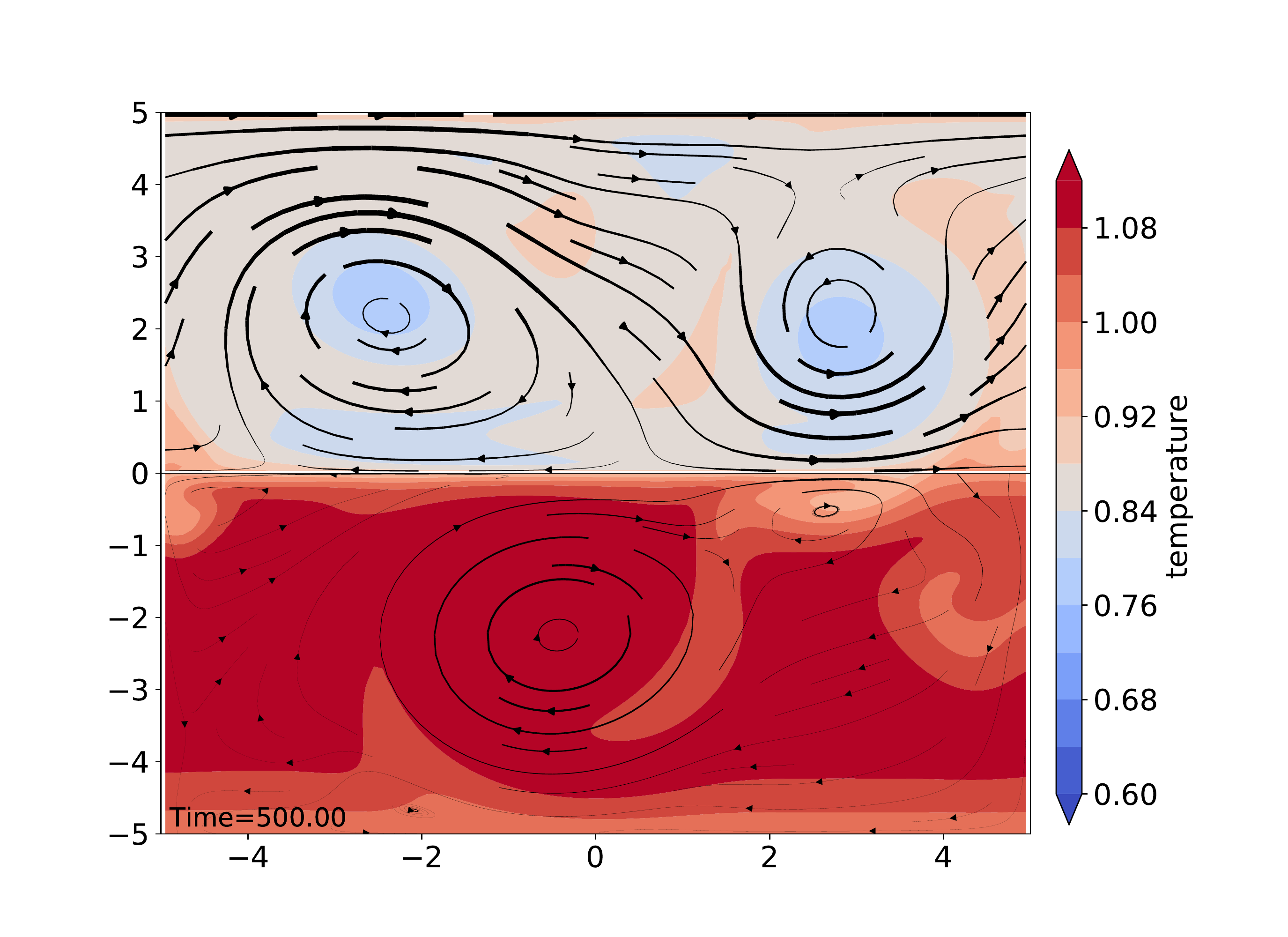}
  }
  \subfigure[ARK2($\Lb$,CC8)]{
    \includegraphics[trim=2.5cm 1.5cm 2.65cm 2.2cm,
      clip=true,width=0.47\columnwidth]{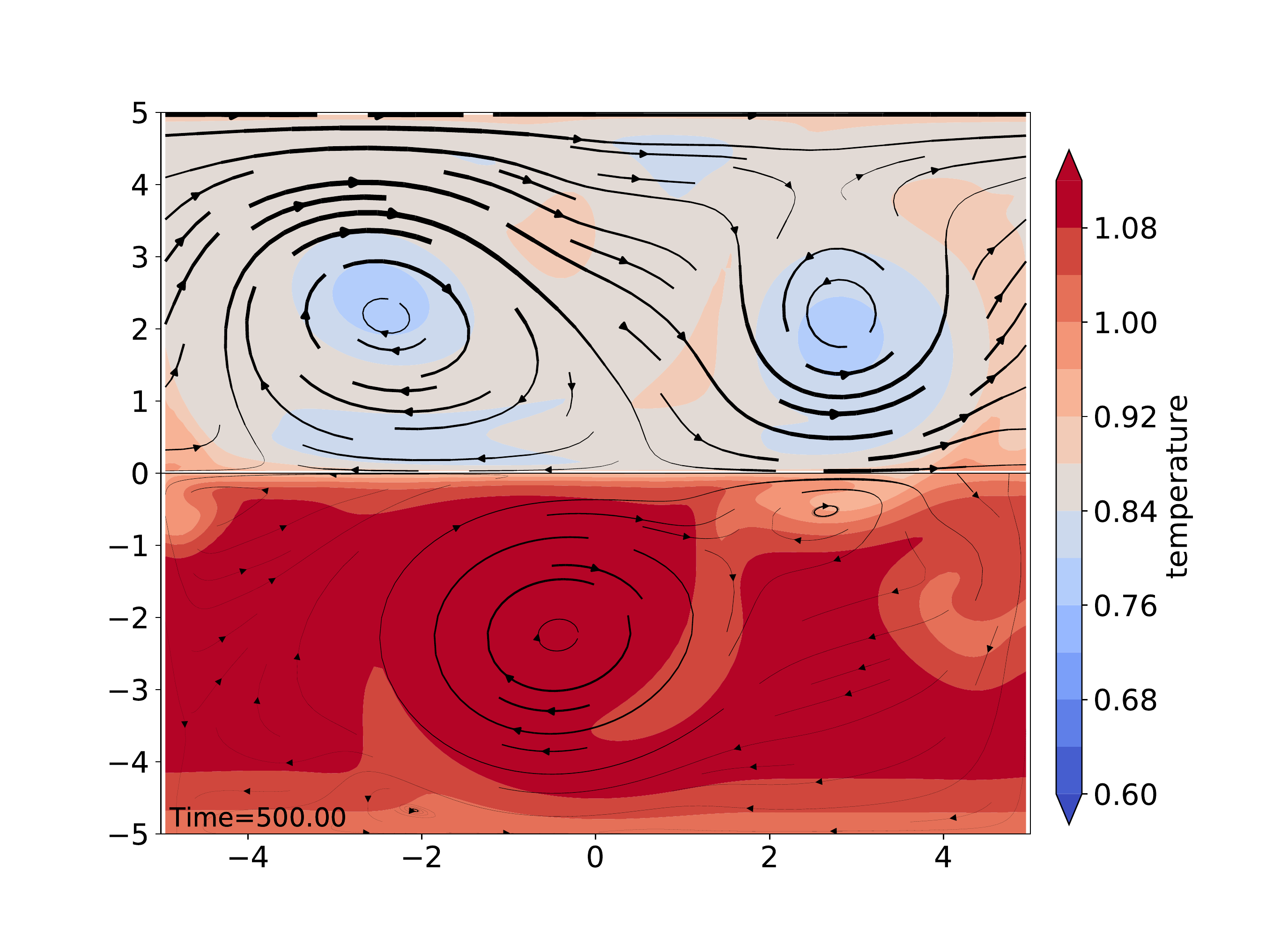}
  }
  \caption{Temperature fields for KHI at $t=500$: (a) RK4, 
  (b) ARK2($\Lb^z$,TC), (c) ARK2($\Lb^z$,SC2), (d) ARK2($\Lb^z$,CC2),
  (e) ARK2($\Lb$,SC8), and (f) ARK2($\Lb$,CC8) coupling methods.
  Simulations are performed 
  on a grid with $100\times400$ elements on $\Omega_1$ 
  and $100 \times 80$ elements on $\Omega_2$.}
  \figlab{khi-re5000-IMEX-ss}
\end{figure}

In Table \tabref{khi-re5000-imex-coupling} we report the relative errors of several IMEX coupling methods with respect to the RK4 solution (with $\dt=0.01$).
Compared with concurrent coupling (CC) methods, sequential coupling (SC) methods show slightly better accuracy.
The relative errors of ARK2 ($\Lb^z$,SC2) and ARK3 ($\Lb^z$,SC2) are smaller than those of the ARK2 ($\Lb^z$,CC2) and ARK3 ($\Lb^z$,CC2) within $\mathcal{O}(10^{-4})$. 
Similarly, ARK2 ($\Lb^z$,SC8) has small relative errors of density, momentum, and total energy compared with those of ARK2 ($\Lb^z$,CC8). 
With the same timestep size, 
the ARK4 ($\Lb^z$,TC) solution is closer to the RK4 solution than that of ARK3 ($\Lb^z$,TC).
With loose coupling (SC2 and CC2) methods, 
ARK4 ($\Lb^z$, SC2/CC2) shows better accuracy than ARK2 ($\Lb^z$, SC2/CC2) for density and total energy, 
and ARK3 ($\Lb^z$, SC2/CC2) for all variables.
However, the
ARK3 ($\Lb^z$) solutions are farther away from the RK4 solution than that of ARK2 ($\Lb^z$).
Nevertheless, with the same timestep, 
the loose coupling (SC2 and CC2) methods  
produce closer solutions to RK4 than do the tight coupling methods 
because SC2 and CC2 have two explicit subcycles. 
We have already observed similar behavior in the wind-driven flow example. 
When we double the timestep size of ARK2 ($\Lb^z$,SC2/CC2), the solutions become unstable.
The linear operator $\Lb^z$ 
is not sufficient to capture the stiff components in the system when $\dt=0.1$. 
However, ARK2 ($\Lb$,SC8/CC8) with $\dt=0.1$ results in stable solutions. 
This result implies that 
as we increase a timestep size beyond a certain Courant number (between $Cr_1=4.56$ and $Cr_1=9.13$),
the viscous part becomes stiff and thus  starts to affect the numerical stability.
Including the viscous part to the linear operator enhances numerical stability but also increase computational cost in this study.\footnote{
  We solve the linear system using Krylov subspace method without any preconditioner. 
  To improve the solver performance, 
  a proper preconditioner needs to be equipped. 
} 
As for wall-clock time, ARK2 ($\Lb^z$) is cheaper than RK4, 
and ARK3 ($\Lb^z$) is comparable to RK4 in this example.
Note that we introduce geometric stiffness vertically to use the HEVI approach.
When a mesh is anisotropic, IMEX coupling methods can be beneficial.


\begin{table}[t] 
  \caption{Relative errors and wall-clock times for IMEX coupling methods with respect to the RK4 coupling at $t=500$.
  The domain is discretized with $100\times400$ on $\Omega_1$ and $100 \times 80$ on $\Omega_2$. $Cr_2$ is the Courant number for the entire timestep; the substep number needs to be scaled by $1/k$ for SC$\{k\}$ and CC$\{k\}$.
  }
  \tablab{khi-re5000-imex-coupling} 
  \begin{center} 
    \begin{tabular}{*{1}{c}|*{1}{c}|*{1}{c}|*{1}{c}|*{1}{c}|*{1}{c}}  
    \hline 
    \multirow{2}{*}{ }
    & \multirow{2}{*}{$ \triangle t (Cr_1,Cr_2)$}
    & \multirow{2}{*}{$ \left\Vert \rho  - \rho_r \right\Vert  $}  
    & \multirow{2}{*}{$ \left\Vert \rho {\bf u} - \rho {\bf u}_r \right\Vert  $}  
    & \multirow{2}{*}{$ \left\Vert \rho E - \rho E_r \right\Vert  $}  
    & \multirow{2}{*}{wc[s]} \tabularnewline 
    & && & &  \tabularnewline 
    \hline\hline 
    RK4 & 0.01 (0.91,0.18) &       -&       -&       -& 3968.209\tabularnewline
    \multicolumn{6}{c}{} \tabularnewline
    ARK2 ($\Lb^z$,TC)  & 0.04(3.65,0.70)&       5.768E-03&       3.534E-03&       7.369E-04& 2407.544\tabularnewline
    ARK2 ($\Lb^z$,SC2) & 0.05(4.56,0.88)&       1.143E-03&       5.181E-04&       6.271E-04& 2356.430\tabularnewline
    ARK2 ($\Lb^z$,CC2) & 0.05(4.56,0.88)&       1.757E-03&       8.697E-04&       6.612E-04& 2240.349\tabularnewline
    ARK2 ($\Lb^z$,SC8) & 0.05(4.56,0.88) &      8.361E-04&       6.511E-04&       6.220E-04& 2929.486\tabularnewline
    ARK2 ($\Lb^z$,CC8) & 0.05(4.56,0.88) &      1.437E-03&       9.995E-04&       6.597E-04& 2999.596\tabularnewline
    ARK2 ($\Lb$,SC8)   & 0.10(9.13,1.76)&       1.631E-03&       1.305E-03&       1.569E-03& 6794.486\tabularnewline    
    ARK2 ($\Lb$,CC8)   & 0.10(9.13,1.76)&       2.916E-03&       1.987E-03&       1.633E-03& 6542.753\tabularnewline
    \multicolumn{6}{c}{} \tabularnewline
    ARK3 ($\Lb^z$,TC)   & 0.05(4.56,0.88)&       1.160E-02&       7.594E-03&       2.373E-03& 3403.247\tabularnewline
    ARK3 ($\Lb^z$,SC2)  & 0.05(4.56,0.88)&       1.171E-03&       1.284E-03&       2.170E-03& 3781.536\tabularnewline
    ARK3 ($\Lb^z$,CC2)  & 0.05(4.56,0.88)&       1.719E-03&       1.605E-03&       2.184E-03& 3890.584\tabularnewline
    \multicolumn{6}{c}{} \tabularnewline
    ARK4 ($\Lb^z$,TC)   & 0.05(4.56,0.88)&       4.452E-03&       2.702E-03&       4.910E-04& 4706.067\tabularnewline
    ARK4 ($\Lb^z$,SC2)  & 0.05(4.56,0.88)&       9.121E-04&       7.120E-04&       3.522E-04& 5402.731\tabularnewline
    ARK4 ($\Lb^z$,CC2)  & 0.05(4.56,0.88)&       1.475E-03&       1.084E-03&       4.261E-04& 5120.688\tabularnewline    
   \hline\hline 
  \end{tabular} 
  \end{center} 
\end{table}

Next we plot the time series of total mass and total energy losses in Figure \figref{khi-re5000-IMEX-history}.
The total mass and total energy losses are defined by
\begin{align*}
  \text{mass loss}   &:= \snor{ \text{mass}(t) - \text{mass}(0)}, \\
  \text{energy loss} &:= \snor{ \text{energy}(t) - \text{energy}(0)}, 
\end{align*}
where 
\begin{align*}
  \text{energy} = \int_{\Omega_1} \rho E d\Omega_1 + \int_{\Omega_2} \rho E d\Omega_2 
  = \sum_{m=1}^2 \sum_{\ell=1}^{\Ne{m}} \overline{\rho E}_{m_\ell} \snor{K_{m_\ell}} 
\end{align*}
and $\overline{\rho E}_{m_\ell} = \snor{K_{m_\ell}}^{-1} \int_{K_{m_\ell}} \rho E dK$.
In Figure \figref{khi-re5000-IMEX-history}(a) 
we numerically observe that total mass is conserved with IMEX coupling methods regardless of tight or loose coupling methods.
The total mass losses for IMEX tight and loose coupling methods are within $\mathcal{O}(10^{-13})$.
This means that the total mass changes with time, but fluates around the initial total mass within $10^{-13}$. 
This result makes sense because we do not exchange the mass across the interface but 
adjust the wall temperature in the interface. 
This rigid-lid condition blocks the vertical motion of the interface: 
no mass flux is allowed, and hence the total mass is conserved. 
As for the total energy loss, a peak is observed near $t=100$ 
(when strong KHI is observed in Figure \figref{khi-re5000-hevi-ark4-tc-ss}), 
and then the total energy loss decreases as time passes.


\begin{figure}[h!t!b!]
  \centering
  \subfigure[Mass loss]{
    \includegraphics[trim=1.0cm 1.0cm 0.5cm 0.2cm,
      clip=true,width=0.47\columnwidth]{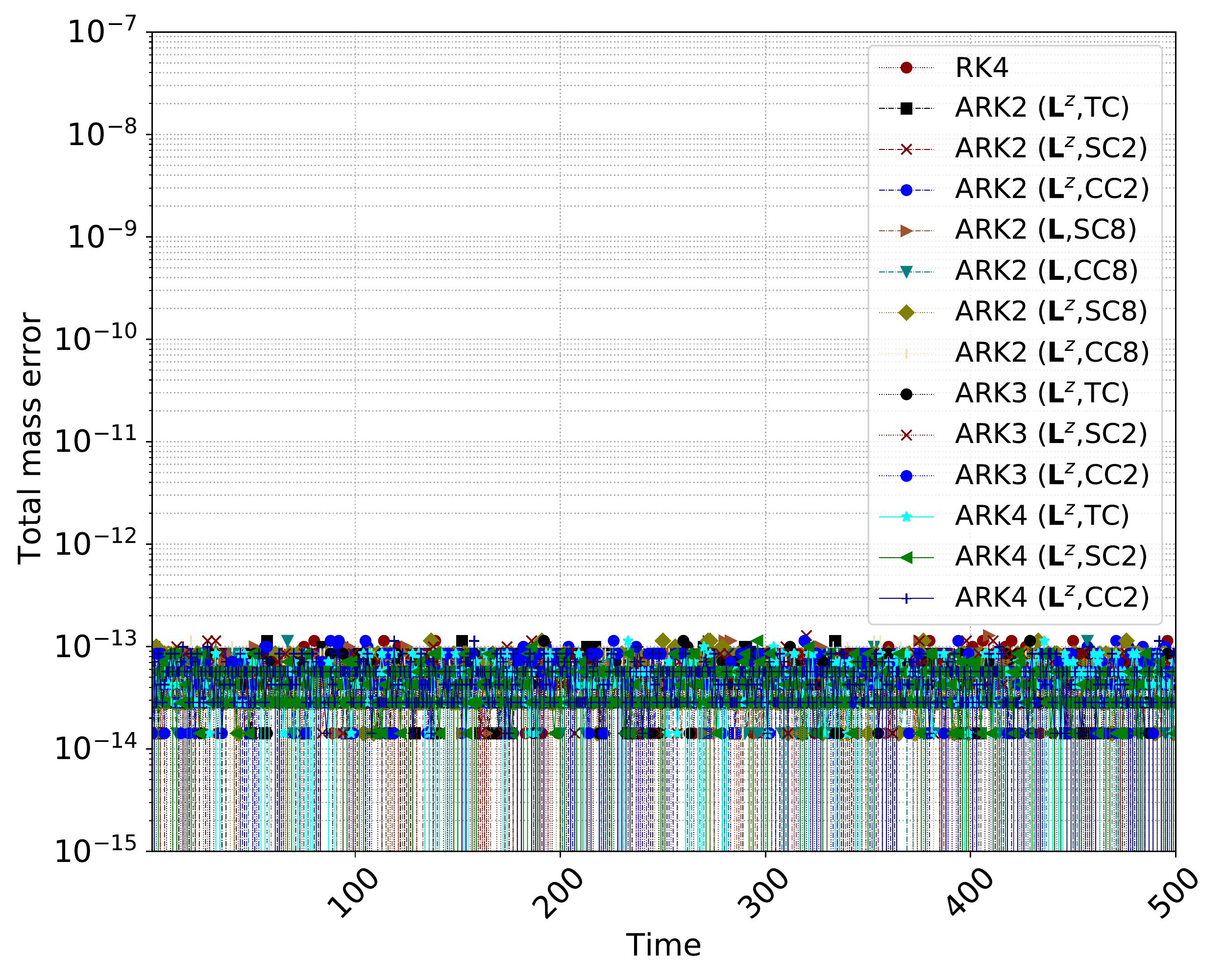}
  }
  \subfigure[Total energy loss]{
    \includegraphics[trim=1.0cm 1.0cm 0.5cm 0.2cm,
      clip=true,width=0.47\columnwidth]{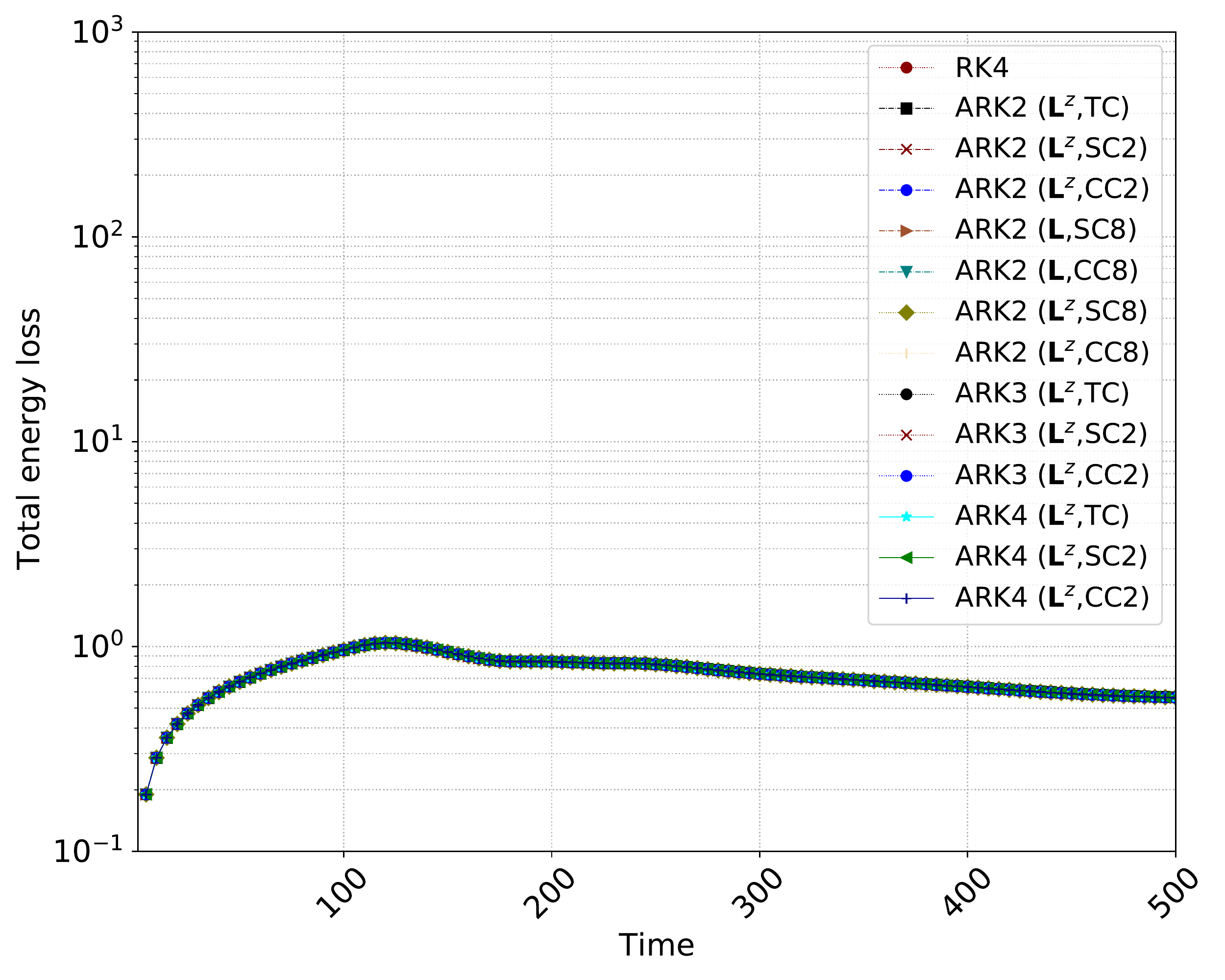}
  }
  
  \caption{Wind-driven flows with KHI: histories of (a) total mass loss and (b) total energy loss for $t\in\LRs{0,500}$.
  Total mass loss is bounded within $\mathcal{O}(10^{-13})$. 
  Total energy loss has a peak near $t=100$ when strong KHI is observed in Figure \figref{khi-re5000-hevi-ark4-tc-ss}. }
  \figlab{khi-re5000-IMEX-history}
\end{figure}

\section{Conclusions}
\seclab{Conclusion}
 
In this paper, we have developed IMEX coupling methods for compressible Navier--Stokes systems 
with the rigid-lid coupling condition arising from the atmosphere and the ocean interaction. 
We compute 
horizontal momentum and heat fluxes across the interface by using the bulk formula, 
from which we estimate wall temperature and horizontal velocity.
These estimated values serve as the isothermal moving wall boundary conditions on each model.
Each model is solved within the IMEX (tight or loose) coupling framework. 
IMEX coupling methods solve one domain (atmosphere) explicitly and the other (ocean) implicitly.
To enhance computation efficiency, we adapt IMEX time integrators, 
which can handle scale-separable stiffness or geometrically induced stiffness, 
as an implicit solver for the ocean model. 
Furthermore, we employ a horizontally explicit and vertically implicit (HEVI) approach 
where solutions are obtained column by column; hence, the resulting linear system 
is significantly reduced 
compared with two-dimensional IMEX methods. 
 
IMEX tight coupling methods 
naturally support two-way coupling at every stage.
Thus, 
the continuity of the heat and the horizontal momentum fluxes 
is guaranteed by construction. 
These methods also facilitate high-order solutions in time
and relax geometrically induced stiffness. 
However, 
IMEX tight coupling methods treat one domain (atmosphere) explicitly, and 
therefore the stiffness in the domain (atmosphere) restricts the maximum timestep size. 
Also, each coupled model advances with the same timestep size. We can alleviate the restriction by loosening the tight coupling condition. 
That is, instead of two-way coupling at each stage, 
we exchange the interface information at a certain time.
Heat and horizontal momentum fluxes are no longer continuous 
across the interface at the stages, but total mass is strictly conserved. 
In the Kelvin--Helmholtz instability example, the total mass loss is less than $\mathcal{O}(10^{-13})$. 
We conjecture that the total mass will still be conserved with other formulations such as a coupled compressible and incompressible Navier--Stokes systems as long as the spatial discretization is conservative. 
As for temporal convergence, 
IMEX loose coupling methods achieve a first-order convergent rate in the two moving-vortex example.
However, the relative errors of the IMEX loose coupling schemes are smaller than those of their IMEX tight coupling counterparts.
The reason  is that the temporal discretization errors are reduced by adding substeps. 

We have investigated two loose coupling strategies: concurrent and sequential. 
Both  exchange interface solutions  
before advancing the ocean model.
In concurrent coupling, each model is run independently, which is attractive for parallel computing. In contrast,  in   
sequential coupling, the ocean model is run first and then the atmospheric model is advanced with a number of substeps. 
In the latter case, because all stage values from the ocean model are available, 
necessary interface information can be interpolated and transferred 
from the ocean model to the atmospheric model at each stage.
This capability slightly improves the accuracy but is not significant in our simulations. 

We also observe that the choice of the linear operator affects numerical stability.
The viscous term is not a dominant source of stiffness with a small timestep size; thus
the linear operator $\Lb^z$ is sufficient to relax stiffness in the ocean model.
 As the timestep size increases, however, $\Lb^z$ is not sufficient  
because viscous terms become stiffer.   
In this case, using the linear $\Lb$ that contains both the inviscid and the viscous parts increases coupling stability. 
As for the computational cost, the
ARK2 ($\Lb^z$) and ARK3 ($\Lb^z$) coupling methods show  wall-clock time comparable to that of the RK4 method
within $\mathcal{O}(10^{-2})$ relative errors.
Note that for compressible Navier--Stokes systems, 
the relative errors of IMEX coupling methods are caused mainly  by suppressing fast acoustic waves. 
The flow patterns of IMEX couplings methods show good agreement with the RK counterpart in our numerical examples.
  
     
In the current study, we  investigated IMEX coupling schemes 
on two ideal gas fluids without gravity. 
For realistic atmosphere and ocean interaction, the
gravity source term and the material properties of the ocean should be taken into account.
Also, coupling incompressible and compressible Navier--Stokes system is also worthwhile for reflecting the current global atmosphere and ocean models. 
Since these may affect the stability of coupling schemes, 
ongoing work focuses on improving our testbed to fit realistic models in modern computing architectures.
Other time integrators such as multirate or exponential integrators may represent more  effective ways for addressing limitations in the couple performance. 

\section*{Acknowledgments}
This material is based upon work supported by the U.S. Department of Energy, Office of Science, Office of Advanced Scientific Computing Research and Office of Biological and Environmental Research, Scientific Discovery through Advanced Computing (SciDAC) program under Contract DE-AC02-06CH11357 through the Coupling Approaches for Next-Generation Architectures (CANGA) Project.

\bibliography{main}

\begin{thebibliography}{10}
\expandafter\ifx\csname url\endcsname\relax
  \def\url#1{\texttt{#1}}\fi
\expandafter\ifx\csname urlprefix\endcsname\relax\def\urlprefix{URL }\fi
\expandafter\ifx\csname href\endcsname\relax
  \def\href#1#2{#2} \def\path#1{#1}\fi

\bibitem{golaz2019doe}
J.-C. Golaz, P.~M. Caldwell, L.~P. Van~Roekel, M.~R. Petersen, Q.~Tang, J.~D.
  Wolfe, G.~Abeshu, V.~Anantharaj, X.~S. Asay-Davis, D.~C. Bader, et~al., The
  {DOE} {E3SM} coupled model version 1: Overview and evaluation at standard
  resolution, Journal of Advances in Modeling Earth Systems 11~(7) (2019)
  2089--2129.

\bibitem{hazeleger2010ec}
W.~Hazeleger, C.~Severijns, T.~Semmler, S.~{\c{S}}tef{\u{a}}nescu, S.~Yang,
  X.~Wang, K.~Wyser, E.~Dutra, J.~M. Baldasano, R.~Bintanja, et~al.,
  {EC}-{E}arth: a seamless earth-system prediction approach in action, Bulletin
  of the American Meteorological Society 91~(10) (2010) 1357--1364.

\bibitem{gentine2019coupling}
P.~Gentine, J.~K. Green, M.~Gu{\'e}rin, V.~Humphrey, S.~I. Seneviratne,
  Y.~Zhang, S.~Zhou, Coupling between the terrestrial carbon and water cycles a
  review, Environmental Research Letters 14~(8) (2019) 083003.

\bibitem{burrows2020doe}
S.~Burrows, M.~Maltrud, X.~Yang, Q.~Zhu, N.~Jeffery, X.~Shi, D.~Ricciuto,
  S.~Wang, G.~Bisht, J.~Tang, et~al., The {DOE} {E3SM} v1. 1 biogeochemistry
  configuration: description and simulated ecosystem-climate responses to
  historical changes in forcing, Journal of Advances in Modeling Earth Systems
  12~(9) (2020) e2019MS001766.

\bibitem{collins2006community}
W.~D. Collins, C.~M. Bitz, M.~L. Blackmon, G.~B. Bonan, C.~S. Bretherton, J.~A.
  Carton, P.~Chang, S.~C. Doney, J.~J. Hack, T.~B. Henderson, et~al., The
  {C}ommunity {C}limate {S}ystem {M}odel version 3 ({CCSM3}), Journal of
  Climate 19~(11) (2006) 2122--2143.

\bibitem{anderson2016co2}
T.~R. Anderson, E.~Hawkins, P.~D. Jones, {CO2}, the greenhouse effect and
  global warming: from the pioneering work of arrhenius and callendar to
  today's {E}arth {S}ystem {M}odels, Endeavour 40~(3) (2016) 178--187.

\bibitem{hoffman2019effect}
M.~J. Hoffman, X.~Asay-Davis, S.~F. Price, J.~Fyke, M.~Perego, Effect of
  subshelf melt variability on sea level rise contribution from thwaites
  glacier, antarctica, Journal of Geophysical Research: Earth Surface.

\bibitem{craig2012new}
A.~P. Craig, M.~Vertenstein, R.~Jacob, A new flexible coupler for earth system
  modeling developed for {CCSM4} and {CESM1}, The International Journal of High
  Performance Computing Applications 26~(1) (2012) 31--42.

\bibitem{jacob2005m}
R.~Jacob, J.~Larson, E.~Ong, M$\times${N} communication and parallel
  interpolation in {C}ommunity {C}limate {S}ystem {M}odel version 3 using the
  {M}odel {C}oupling {T}oolkit, The International Journal of High Performance
  Computing Applications 19~(3) (2005) 293--307.

\bibitem{valcke2013oasis3}
S.~Valcke, The {OASIS3} coupler: A {E}uropean climate modelling community
  software, Geoscientific Model Development 6~(2) (2013) 373.

\bibitem{hallberg122014numerical}
R.~Hallberg, Numerical instabilities of the ice/ocean coupled system, in:
  CLIVAR WGOMD Workshop on high, 2014, p.~38.

\bibitem{lemarie2015analysis}
F.~Lemari{\'e}, E.~Blayo, L.~Debreu, Analysis of ocean-atmosphere coupling
  algorithms: consistency and stability, Procedia Computer Science 51 (2015)
  2066--2075.

\bibitem{beljaars2017numerical}
A.~Beljaars, E.~Dutra, G.~Balsamo, F.~Lemari\'e, On the numerical stability of
  surface--atmosphere coupling in weather and climate models, Geoscientific
  Model Development 10~(2) (2017) 977--989.

\bibitem{zhang2020stability}
H.~Zhang, Z.~Liu, E.~Constantinescu, R.~Jacob, Stability analysis of interface
  conditions for ocean--atmosphere coupling, Journal of Scientific Computing
  84~(3) (2020) 1--25.

\bibitem{peterson2019explicit}
K.~Peterson, P.~Bochev, P.~Kuberry, Explicit synchronous partitioned algorithms
  for interface problems based on {L}agrange multipliers, Computers \&
  Mathematics with Applications 78~(2) (2019) 459--482.

\bibitem{sockwell2020interface}
K.~C. Sockwell, K.~Peterson, P.~Kuberry, P.~Bochev, N.~Trask, Interface flux
  recovery coupling method for the ocean--atmosphere system, Results in Applied
  Mathematics (2020) 100110.

\bibitem{lemarie2014sensitivity}
F.~Lemari{\'e}, P.~Marchesiello, L.~Debreu, E.~Blayo, Sensitivity of
  ocean-atmosphere coupled models to the coupling method: example of tropical
  cyclone {E}rica.

\bibitem{bresch2006operator}
D.~Bresch, J.~Koko, Operator-splitting and {L}agrange multiplier domain
  decomposition methods for numerical simulation of two coupled
  {N}avier--{S}tokes fluids, International Journal of Applied Mathematics and
  Computer Science 16 (2006) 419--429.

\bibitem{formaggia2001coupling}
L.~Formaggia, J.-F. Gerbeau, F.~Nobile, A.~Quarteroni, On the coupling of 3{D}
  and 1{D} {N}avier--{S}tokes equations for flow problems in compliant vessels,
  Computer methods in applied mechanics and engineering 191~(6-7) (2001)
  561--582.

\bibitem{connors2019stability}
J.~M. Connors, R.~D. Dolan, Stability of two conservative, high-order
  fluid-fluid coupling methods, Advances In Applied Mathematics And Mechanics
  11~(6) (2019) 1287--1338.

\bibitem{carpenter2014entropy}
M.~H. Carpenter, T.~C. Fisher, E.~J. Nielsen, S.~H. Frankel, Entropy stable
  spectral collocation schemes for the {N}avier--{S}tokes equations:
  Discontinuous interfaces, SIAM Journal on Scientific Computing 36~(5) (2014)
  B835--B867.

\bibitem{ascher1997implicit}
U.~M. Ascher, S.~J. Ruuth, R.~J. Spiteri, Implicit-explicit {R}unge-{K}utta
  methods for time-dependent partial differential equations, Applied Numerical
  Mathematics 25~(2) (1997) 151--167.

\bibitem{pareschi2005implicit}
L.~Pareschi, G.~Russo, Implicit-explicit {R}unge-{K}utta schemes and
  applications to hyperbolic systems with relaxation, Journal of Scientific
  computing 25~(1-2) (2005) 129--155.

\bibitem{Constantinescu_A2010a}
E.~Constantinescu, A.~Sandu, Extrapolated {IM}plicit--{EX}plicit time stepping,
  SIAM Journal on Scientific Computing 31~(6) (2010) 4452--4477.

\bibitem{Kennedy2003additive}
C.~A. Kennedy, M.~H. Carpenter, Additive {R}unge-{K}utta schemes for
  convection-diffusion-reaction equations, Applied Numerical Mathematics
  44~(1-2) (2003) 139--181.

\bibitem{roldan2013efficient}
T.~Roldan, I.~Higueras, Efficient implicit-explicit {R}unge--{K}utta methods
  with low storage requirements, SciCADE 2013 45~(1) (2013) 174.

\bibitem{boscarino2017unified}
S.~Boscarino, L.~Pareschi, G.~Russo, A unified {IMEX} {R}unge--{K}utta approach
  for hyperbolic systems with multiscale relaxation, SIAM Journal on Numerical
  Analysis 55~(4) (2017) 2085--2109.

\bibitem{kanevsky2007application}
A.~Kanevsky, M.~H. Carpenter, D.~Gottlieb, J.~S. Hesthaven, Application of
  implicit--explicit high--order {R}unge--{K}utta methods to discontinuous
  {G}alerkin schemes, Journal of Computational Physics 225~(2) (2007)
  1753--1781.

\bibitem{kang2019imex}
S.~Kang, F.~X. Giraldo, T.~Bui-Thanh, {IMEX HDG-DG}: A coupled implicit
  hybridized discontinuous {G}alerkin and explicit discontinuous {G}alerkin
  approach for shallow water systems, Journal of Computational Physics (2019)
  109010.

\bibitem{restelli2009conservative}
M.~Restelli, F.~X. Giraldo, A conservative discontinuous {G}alerkin
  semi-implicit formulation for the {N}avier-{S}tokes equations in
  nonhydrostatic mesoscale modeling, SIAM Journal on Scientific Computing
  31~(3) (2009) 2231--2257.

\bibitem{giraldo2010high}
F.~Giraldo, M.~Restelli, High-order semi-implicit time-integrators for a
  triangular discontinuous {G}alerkin oceanic shallow water model,
  International journal for numerical methods in fluids 63~(9) (2010)
  1077--1102.

\bibitem{giraldo2013implicit}
F.~X. Giraldo, J.~F. Kelly, E.~Constantinescu, Implicit-explicit formulations
  of a three-dimensional nonhydrostatic unified model of the atmosphere
  ({NUMA}), SIAM Journal on Scientific Computing 35~(5) (2013) B1162--B1194.

\bibitem{gardner2018implicit}
D.~J. Gardner, J.~E. Guerra, F.~P. Hamon, D.~R. Reynolds, P.~A. Ullrich, C.~S.
  Woodward, Implicit--explicit ({IMEX}) {R}unge--{K}utta methods for
  non-hydrostatic atmospheric models, Geoscientific Model Development 11~(4)
  (2018) 1497--1515.

\bibitem{vogl2019evaluation}
C.~J. Vogl, A.~Steyer, D.~R. Reynolds, P.~A. Ullrich, C.~S. Woodward,
  Evaluation of implicit explicit additive {R}unge--{K}utta integrators for the
  {HOMME}--{NH} dynamical core, Journal of Advances in Modeling Earth Systems
  11~(12) (2019) 4228--4244.

\bibitem{Abdi_2019}
D.~S. Abdi, F.~X. Giraldo, E.~M. Constantinescu, L.~E. Carr, L.~C. Wilcox,
  T.~C. Warburton, Acceleration of the {IM}plicit--{EX}plicit nonhydrostatic
  unified model of the atmosphere on manycore processors, The International
  Journal of High Performance Computing Applications 33~(2) (2019) 242--267.

\bibitem{froehle2014high}
B.~Froehle, P.-O. Persson, A high-order discontinuous {G}alerkin method for
  fluid--structure interaction with efficient implicit--explicit time stepping,
  Journal of Computational Physics 272 (2014) 455--470.

\bibitem{huang2019high}
D.~Z. Huang, P.-O. Persson, M.~J. Zahr, High--order, linearly stable,
  partitioned solvers for general multiphysics problems based on implicit
  explicit {R}unge--{K}utta schemes, Computer Methods in Applied Mechanics and
  Engineering 346 (2019) 674--706.

\bibitem{washington2005introduction}
W.~M. Washington, C.~Parkinson, Introduction to three-dimensional climate
  modeling, University science books, 2005.

\bibitem{muller2006equations}
P.~M{\"u}ller, The equations of oceanic motions, Cambridge University Press,
  2006.

\bibitem{connors2012decoupled}
J.~M. Connors, J.~S. Howell, W.~J. Layton, Decoupled time stepping methods for
  fluid-fluid interaction, SIAM Journal on Numerical Analysis 50~(3) (2012)
  1297--1319.

\bibitem{liu1979bulk}
W.~T. Liu, K.~B. Katsaros, J.~A. Businger, Bulk parameterization of air-sea
  exchanges of heat and water vapor including the molecular constraints at the
  interface, Journal of the Atmospheric Sciences 36~(9) (1979) 1722--1735.

\bibitem{smith1988coefficients}
S.~D. Smith, Coefficients for sea surface wind stress, heat flux, and wind
  profiles as a function of wind speed and temperature, Journal of Geophysical
  Research: Oceans 93~(C12) (1988) 15467--15472.

\bibitem{fairall1996bulk}
C.~W. Fairall, E.~F. Bradley, D.~P. Rogers, J.~B. Edson, G.~S. Young, Bulk
  parameterization of air-sea fluxes for tropical ocean-global atmosphere
  coupled-ocean atmosphere response experiment, Journal of Geophysical
  Research: Oceans 101~(C2) (1996) 3747--3764.

\bibitem{bao2000numerical}
J.~Bao, J.~Wilczak, J.~Choi, L.~Kantha, Numerical simulations of air--sea
  interaction under high wind conditions using a coupled model: A study of
  hurricane development, Monthly Weather Review 128~(7) (2000) 2190--2210.

\bibitem{panosfsky1984atmospheric}
H.~Panosfsky, J.~Dutton, Atmospheric turbulence: Models and methods for
  engineering applications, NewYork: JohnWiley\&Sons.

\bibitem{vickers2015formulation}
D.~Vickers, L.~Mahrt, E.~L. Andreas, Formulation of the sea surface friction
  velocity in terms of the mean wind and bulk stability, Journal of Applied
  Meteorology and Climatology 54~(3) (2015) 691--703.

\bibitem{jacobs2007conservative}
G.~B. Jacobs, D.~A. Kopriva, F.~Mashayek, A conservative isothermal wall
  boundary condition for the compressible {N}avier--{S}tokes equations, Journal
  of Scientific Computing 30~(2) (2007) 177--192.

\bibitem{roe1986characteristic}
P.~Roe, Characteristic-based schemes for the {E}uler equations, Annual review
  of fluid mechanics 18~(1) (1986) 337--365.

\bibitem{toro2013riemann}
E.~F. Toro, Riemann solvers and numerical methods for fluid dynamics: a
  practical introduction, Springer Science \& Business Media, 2013.

\bibitem{nishikawa2011two}
H.~Nishikawa, Two ways to extend diffusion schemes to navier-stokes schemes:
  Gradient formula or upwind flux, in: 20th AIAA Computational Fluid Dynamics
  Conference, 2011, p. 3044.

\bibitem{syrakos2017critical}
A.~Syrakos, S.~Varchanis, Y.~Dimakopoulos, A.~Goulas, J.~Tsamopoulos, A
  critical analysis of some popular methods for the discretisation of the
  gradient operator in finite volume methods, Physics of Fluids 29~(12) (2017)
  127103.

\bibitem{cristini2012some}
P.~Cristini, D.~Komatitsch, Some illustrative examples of the use of a
  spectral-element method in ocean acoustics, The Journal of the Acoustical
  Society of America 131~(3) (2012) EL229--EL235.

\bibitem{constantinescu2007multirate}
E.~M. Constantinescu, A.~Sandu, Multirate timestepping methods for hyperbolic
  conservation laws, Journal of Scientific Computing 33~(3) (2007) 239--278.

\bibitem{schlegel2009multirate}
M.~Schlegel, O.~Knoth, M.~Arnold, R.~Wolke, Multirate {R}unge--{K}utta schemes
  for advection equations, Journal of Computational and Applied Mathematics
  226~(2) (2009) 345--357.

\bibitem{constantinescu2013extrapolated}
E.~M. Constantinescu, A.~Sandu, Extrapolated multirate methods for differential
  equations with multiple time scales, Journal of Scientific Computing 56~(1)
  (2013) 28--44.

\bibitem{seny2013multirate}
B.~Seny, J.~Lambrechts, R.~Comblen, V.~Legat, J.-F. Remacle, Multirate time
  stepping for accelerating explicit discontinuous {G}alerkin computations with
  application to geophysical flows, International Journal for Numerical Methods
  in Fluids 71~(1) (2013) 41--64.

\bibitem{sandu2019class}
A.~Sandu, A class of multirate infinitesimal {GARK} methods, SIAM Journal on
  Numerical Analysis 57~(5) (2019) 2300--2327.

\bibitem{weller2013runge}
H.~Weller, S.-J. Lock, N.~Wood, {R}unge-{K}utta {IMEX} schemes for the
  horizontally explicit/vertically implicit ({HEVI}) solution of wave
  equations, Journal of Computational Physics 252 (2013) 365--381.

\bibitem{thuburn2008some}
J.~Thuburn, Some conservation issues for the dynamical cores of {NWP} and
  climate models, Journal of Computational Physics 227~(7) (2008) 3715--3730.

\bibitem{ghosh2016semi}
D.~Ghosh, E.~M. Constantinescu, Semi-implicit time integration of atmospheric
  flows with characteristic-based flux partitioning, SIAM Journal on Scientific
  Computing 38~(3) (2016) A1848--A1875.

\bibitem{taylor1937mechanism}
G.~I. Taylor, A.~E. Green, Mechanism of the production of small eddies from
  large ones, Proceedings of the Royal Society of London. Series A-Mathematical
  and Physical Sciences 158~(895) (1937) 499--521.

\bibitem{young2010dynamic}
W.~R. Young, Dynamic enthalpy, conservative temperature, and the seawater
  {B}oussinesq approximation, Journal of physical oceanography 40~(2) (2010)
  394--400.

\bibitem{bruneau20062d}
C.-H. Bruneau, M.~Saad, The 2{D} lid-driven cavity problem revisited, Computers
  \& fluids 35~(3) (2006) 326--348.

\bibitem{drazin2004hydrodynamic}
P.~G. Drazin, W.~H. Reid, Hydrodynamic stability, Cambridge university press,
  2004.

\bibitem{springel2010pur}
V.~Springel, E pur si muove: {G}alilean-invariant cosmological hydrodynamical
  simulations on a moving mesh, Monthly Notices of the Royal Astronomical
  Society 401~(2) (2010) 791--851.

\bibitem{lecoanet2016validated}
D.~Lecoanet, M.~McCourt, E.~Quataert, K.~J. Burns, G.~M. Vasil, J.~S. Oishi,
  B.~P. Brown, J.~M. Stone, R.~M. O'Leary, A validated non-linear
  {K}elvin--{H}elmholtz benchmark for numerical hydrodynamics, Monthly Notices
  of the Royal Astronomical Society 455~(4) (2016) 4274--4288.

\end{thebibliography}

 \begin{center}
	\scriptsize \framebox{\parbox{4in}{Government License (will be removed at publication):
			The submitted manuscript has been created by UChicago Argonne, LLC,
			Operator of Argonne National Laboratory (``Argonne").  Argonne, a
			U.S. Department of Energy Office of Science laboratory, is operated
			under Contract No. DE-AC02-06CH11357.  The U.S. Government retains for
			itself, and others acting on its behalf, a paid-up nonexclusive,
			irrevocable worldwide license in said article to reproduce, prepare
			derivative works, distribute copies to the public, and perform
			publicly and display publicly, by or on behalf of the Government. The Department of Energy will provide public access to these results of federally sponsored research in accordance with the DOE Public Access Plan. http://energy.gov/downloads/doe-public-access-plan.
}}
	\normalsize
\end{center}

\end{document}